\newcommand{\hyph}{\text{-}}
\newcommand{\bref}[1]{(\ref{#1})}
\newcommand{\cat}[1]{\scat{#1}}
\newcommand{\fcat}[1]{\mathbf{#1}}
\newcommand{\ovln}[1]{\overline{#1}}
\newcommand{\such}{:}
\newcommand{\without}{\setminus}
\newcommand{\epsln}{\varepsilon}
\newcommand{\Hom}{\mathrm{Hom}}
\newcommand{\op}{\mathrm{op}}
\newcommand{\id}{\mathrm{id}}
\newcommand{\Ab}{\fcat{Ab}}
\newcommand{\Set}{\fcat{Set}}
\newcommand{\CAT}{\fcat{CAT}}
\newcommand{\oppairu}{\rightleftarrows}
\newcommand{\Mod}{\fcat{Mod}}
\newcommand{\swnt}{\rotatebox{45}{$\Leftarrow$}\!}
\newcommand{\demph}[1]{\textbf{\textup{#1}}}
\newcommand{\scat}[1]{\mathcal{#1}}
\newcommand{\iso}{\cong}
\newcommand{\eqv}{\simeq}
\newcommand{\of}{\circ}
\newcommand{\sub}{\subseteq}
\newcommand{\ladj}{\dashv}
\newcommand{\cell}[4]{\put(#1,#2){\makebox(0,0)[#3]{#4}}}
\newcommand{\Zero}{\mathbf{0}}
\newcommand{\One}{\mathbf{1}}
\newcommand{\Two}{\mathbf{2}}
\newcommand{\toby}[1]{\stackrel{#1}{\longrightarrow}}
\newcommand{\Top}{\fcat{Top}}
\newcommand{\Vect}{\fcat{Vect}}
\newcommand{\incl}{\hookrightarrow}
\DeclareMathOperator{\im}{im}
\newcommand{\from}{\colon}
\newcommand{\jn}{\vee}
\newcommand{\Q}{\mathbb{Q}}
\renewcommand{\emptyset}{\varnothing}
\renewcommand{\implies}{\mathrel{\Rightarrow}}
\newcommand{\hardref}[1]{#1} 
\newcommand{\pfun}{\mathrel{\ooalign{\hfil$+\mkern3mu$\hfil\cr$\longrightarrow$\cr}}} 
\DeclareMathOperator{\dn}{\downarrow} 
\DeclareMathOperator{\up}{\uparrow} 
\newcommand{\truncsub}
{\ensuremath{\mathbin{\hbox{%
        \ooalign{\hfil\raisebox{3pt}{$\cdot$}%
          \hfil\cr\hfil$-$\hfil}}}}}
\newcommand{\Inv}{\fcat{Inv}}
\newcommand{\Env}{\fcat{Env}}
\newcommand{\ADJ}{\fcat{ADJ}}
\newcommand{\ADJi}{\ADJ_{\mathord{\cong}}}
\newcommand{\refl}{\cat{R}}
\newcommand{\IEnv}{\cat{I}}
\DeclareMathOperator{\colim}{colim}
\DeclareMathOperator{\Cone}{Cone}
\newcommand{\snughat}{\setlength{\unitlength}{1ex}%
\begin{picture}(1.25,0.42)%
\put(-0.25,-1.33){$\widehat{\ }$}
\end{picture}}
\newlength{\wclen}
\newcommand\widercheck[1]{%
\setlength{\wclen}{\widthof{\ensuremath{#1}}}%
\savestack{\tmpbox}{\stretchto{%
  \scaleto{%
    \scalerel*[.5\wclen]{\kern-.6pt\snughat\kern-.6pt}%
    {\rule[-\textheight/2]{1ex}{\textheight}}
  }{\textheight}%
}{.45ex}}%
\stackon[1pt]{#1}{\scalebox{-1}{\tmpbox}}%
}
\newcommand{\cocmp}[1]{\widehat{#1}}
\newcommand{\cmp}[1]{\widercheck{#1}}
\newcommand{\du}[1]{{#1}^\vee}
\newcommand{\dubk}{\du{}}
\newcommand{\ddu}[1]{{#1}^{\vee\vee}}
\newcommand{\dddu}[1]{{#1}^{\vee\vee\vee}}
\newcommand{\nv}[1]{N_{#1}}
\newcommand{\conv}[1]{N^{#1}}
\newcommand{\rnv}[1]{N(#1)}
\newcommand{\rrnv}[1]{NN(#1)}
\newcommand{\pof}{\odot}
\newcommand{\etimes}{\boxtimes}
\newcommand{\V}{\cat{V}}
\newcommand{\As}{\scat{A}}
\newcommand{\A}{\cat{A}}
\newcommand{\Bs}{\scat{B}}
\newcommand{\B}{\cat{B}}
\newcommand{\Cs}{\scat{C}}
\newcommand{\C}{\cat{C}}
\newcommand{\lhom}[1]{[#1]_{\mathord{<}}}
\newcommand{\rhom}[1]{[#1]_{\mathord{>}}}
\newcommand{\Prof}{\fcat{Prof}}
\DeclareMathOperator{\supp}{supp}
\newcommand{\ltoby}[1]{\xrightarrow{#1}}
\newcommand{\VNat}{\V\hyph\textbf{\textup{Nat}}}
\newtheorem{thm}{Theorem}[section]
\newtheorem{propn}[thm]{Proposition}
\newtheorem{lemma}[thm]{Lemma}
\newtheorem{cor}[thm]{Corollary}
\newtheorem{defn}[thm]{Definition}
\newtheorem{example}[thm]{Example}
\newtheorem{remark}[thm]{Remark}
\theoremstyle{nonumberplain}
\newtheorem{proof}{Proof}
\definecolor{myurlcolor}{rgb}{0.1,0.1,0.8}
\definecolor{mylinkcolor}{rgb}{0.05,0.05,0.4}
\title{Isbell conjugacy and the reflexive completion}
\author{Tom Avery \qquad Tom Leinster%
\thanks{School of Mathematics, University of Edinburgh, Edinburgh EH9 3FD,
  Scotland; Tom.Leinster@ed.ac.uk. Supported by a Leverhulme Trust Research
  Fellowship.}}
\date{}
\begin{document}
\sloppy

\maketitle

\begin{abstract}
The reflexive completion of a category consists of the $\Set$-valued
functors on it that are canonically isomorphic to their double
conjugate. After reviewing both this construction and Isbell conjugacy
itself, we give new examples and revisit Isbell's main results from 1960
in a modern categorical context. We establish the sense in which reflexive
completion is functorial, and find conditions under which two categories
have equivalent reflexive completions. We describe the relationship between
the reflexive and Cauchy completions, determine exactly which limits and
colimits exist in an arbitrary reflexive completion, and make precise the
sense in which the reflexive completion of a category is the intersection
of the categories of covariant and contravariant functors on it.
\end{abstract}

\tableofcontents

\section{Introduction}
\label{sec:intro}

Isbell conjugacy inhabits the same basic level of category theory
as the Yoneda lemma, springing from the most primitive concepts of the
subject: category, functor and natural transformation. It can be understood
as follows.

Let $\As$ be a small category. Any functor $X \from \A^\op \to \Set$ gives
rise to a new functor $X' \from \A^\op \to \Set$ defined by
\[
X'(a) = [\A^\op, \Set](\A(-, a), X),
\]
and so, in principle, an infinite sequence $X, X', X'', \ldots$ of functors
$\A^\op \to \Set$. Of course, they are all canonically isomorphic, by the
Yoneda lemma. But $X$ also gives rise to a functor $\du{X} \from \A
\to \Set$, its \emph{Isbell conjugate}, defined by
\begin{align}
\label{eq:conj-intro}
\du{X}(a) = [\A^\op, \Set](X, \A(-, a)).
\end{align}
The same construction with $\A$ in place of $\A^\op$ produces from $\du{X}$
a further functor $\ddu{X} \from \A^\op \to \Set$, and so on, giving an
infinite sequence $X, \du{X}, \ddu{X}, \ldots$ of functors on $\A$ with
alternating variances. Although it makes no sense to ask whether $\du{X}$
is isomorphic to $X$ (their types being different), one can ask
whether $\ddu{X} \iso X$. This is false in general. Thus, there is nontrivial
structure.

The conjugacy operations define an adjunction between
$[\A^\op, \Set]$ and $[\A, \Set]^\op$, so that
\[
[\A^\op, \Set](X, \du{Y}) \iso [\A, \Set](Y, \du{X})
\]
naturally in $X \from \A^\op \to \Set$ and $Y \from \A \to \Set$.  The unit
and counit of the adjunction are canonical maps $X \to \ddu{X}$ and $Y \to
\ddu{Y}$, and a covariant or contravariant functor on $\A$ is said to be
\emph{reflexive} if the canonical map to its double conjugate is an
isomorphism.

The \emph{reflexive completion} $\refl(\A)$ of $\A$ is the category of
reflexive functors on $\A$ (covariant or contravariant; it makes no
difference). Put another way, $\refl(\A)$ is the invariant part of the
conjugacy adjunction. It contains $\A$, since representables are
reflexive. Its properties are the main subject of this work.

The reflexive completion is very natural category-theoretically, but
categories of reflexive objects also appear in other parts of
mathematics. That is, there are many notions of duality in mathematics, in
most instances there is a canonical map $\eta_X \from X \to X^{**}$ from
each object $X$ to its double dual, and special attention is paid to those
$X$ for which $\eta_X$ is an isomorphism. For example, in linear algebra,
the vector spaces $X$ with this property are the finite-dimensional ones,
and in functional analysis, there is a highly developed theory of
reflexivity for Banach spaces and topological vector spaces.

\paragraph*{Content of the paper} We begin with the definition of
conjugacy on \emph{small} categories, giving several characterizations of
the conjugacy operations and many examples (Sections~\ref{sec:conj-small}
and~\ref{sec:eg-conj}). Defining conjugacy on an arbitrary category is
more delicate, and we review and use the notion of small functor
(Section~\ref{sec:conj-gen}). This allows us to state the definition of the
reflexive completion of an arbitrary category, and again, we give many
examples (Sections~\ref{sec:completion} and~\ref{sec:eg-compl}).

Up to here, there are no substantial theorems, but the examples provide
some surprises. For instance, the reflexive completion of a nontrivial
group is simply the group with initial and terminal objects
adjoined---except when the group is of order $2$, in which case it is
something more complicated (for reasons related to the fact that $2 + 2 = 2
\times 2$; see Example~\ref{eg:compl-groups}). There is also a finite
monoid whose reflexive completion is not even small, a fact due to Isbell
(Examples~\ref{eg:compl-7} and~\ref{eg:char-7}). Other examples involve
the Dedekind--MacNeille completion of an ordered set
(Examples~\ref{eg:compl-posets} and~\ref{eg:compl-posets-2}) and the tight
span of a metric space (at the end of Section~\ref{sec:eg-compl}).

The second half of the paper develops the theory, as follows.

Section~\ref{sec:density} collects necessary results on dense and adequate
functors. (See Definition~\ref{defn:adeq} and Remark~\ref{rmk:dense-adeq}
for this terminology.) Many of them are standard, but
we address points about set-theoretic size that do not seem to have
previously been considered. Using the results of Section~\ref{sec:density},
we give a unique characterization of the reflexive completion that sharpens
a result of Isbell's (Theorem~\ref{thm:compl-char}).

Reflexive completion is functorial (Section~\ref{sec:func}), but only with
respect to a very limited class of functors: the small-adequate
ones. It is often the case that the functor $\refl(F) \from
\refl(\B) \to \refl(\A)$ induced by a functor $F \from \A \to \B$ is an
equivalence. For example, using work of Day and Lack on small functors
together with the results on size just mentioned, we show that $\refl(F)$
is always an equivalence if $\B$ is either small or both complete and
cocomplete (Corollary~\ref{cor:gentle-func}). 

Our study of functoriality naturally recovers Isbell's result that
reflexive completion is idempotent: $\refl(\refl(\A)) \eqv \refl(\A)$. A
category is \emph{reflexively complete} if it is the reflexive completion
of some category, or equivalently if every reflexive functor
on it is representable.

Reflexive completion has certain formal resemblances to Cauchy completion,
but the reflexive completion is typically bigger
(Figure~\ref{fig:completions}). The relationship is analysed in
Section~\ref{sec:cauchy}. 

A reflexively complete category has absolute (co)limits, and if it is the
reflexive completion of a \emph{small} category then it has initial and
terminal objects too, but these are all the limits and colimits that it
generally has (Section~\ref{sec:limits}). The case of ordered sets, where
the reflexive (Dedekind--MacNeille) completion has all (co)limits, is
atypical. On the other hand, it \emph{is} true that a complete or
cocomplete category is reflexively complete (Figure~\ref{fig:completions}).

\begin{figure}
\setlength{\unitlength}{1mm}
\begin{picture}(120,43)
\cell{2}{5}{bl}{%
$\xymatrix@R=2ex@C=2ex{
        &\IEnv(\A)      &       \\
\cocmp{\A} \ar@{-}[ru] \ar@{-}[rd]      &
        &
\cmp{\A} \ar@{-}[lu] \ar@{-}[ld]        \\
        &\refl(\A)      &       \\
        &\ovln{\A} \ar@{-}[u] \ar@{-}[d]        &       \\
        &\A
}$%
}
\cell{118}{5}{br}{\includegraphics[height=37\unitlength]{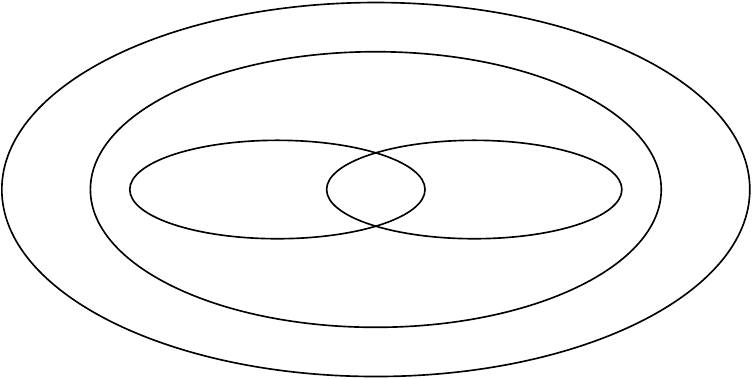}}
\cell{70}{26}{b}{\colorbox{white}{complete}}
\cell{93.5}{26}{b}{\colorbox{white}{cocomplete}}
\cell{81}{33}{b}{\colorbox{white}{reflexively complete}}
\cell{81}{39}{b}{\colorbox{white}{Cauchy complete}}
\cell{16}{-0.5}{b}{(a)}
\cell{81}{-0.5}{b}{(b)}
\end{picture}%
\caption{(a)~Completions of a category $\A$: the Cauchy completion
$\ovln{\A}$, reflexive completion $\refl(\A)$, free completions
$\cocmp{\A}$ and $\protect\cmp{\A}$ with respect to small colimits and
small limits, and Isbell envelope $\IEnv(\A)$; (b)~classes of complete
categories.}
\label{fig:completions}
\end{figure}

Informally, one can understand $\refl(\A)$ as the intersection $\cocmp{\A} \cap
\cmp{\A}$, where $\cocmp{\A}$ and $\cmp{\A}$ are the free completions of
$\A$ under small colimits and small limits. (If $\A$ is small then
$\cocmp{\A} = [\A^\op, \Set]$ and $\cmp{\A} = [\A, \Set]^\op$.)
Section~\ref{sec:envelope} formalizes this idea, reviewing the definition
of the Isbell envelope $\IEnv(\A)$ of a category and proving that the
square in Figure~\ref{fig:completions}\hardref{(a)} is a pullback in the
bicategorical sense.

We work with categories enriched over a suitable monoidal category $\V$ in
Sections~\ref{sec:conj-small}--\ref{sec:eg-compl}, then restrict to $\V =
\Set$ from Section~\ref{sec:density}. While some of the later results are
particular to $\V = \Set$ (such as Theorem~\ref{thm:rcn-lim} on limits),
others can be generalized to any $\V$.  To avoid complicating the
presentation, we have not specified exactly which results generalize, but
we have tried to choose proofs that make any generalization transparent.

\paragraph*{Relationship to Isbell's paper} Although there are many new
results in this work, some parts are accounts of results first proved
in Isbell's remarkable paper~\cite{IsbeAS}, and the reader may ask what we
bring that Isbell did not. There are several answers. 

First, Isbell's paper was extraordinarily early. He submitted it in
mid-1959, only the year after the publication of Kan's paper introducing
adjoint functors. What we now know about category theory can be used to
give shape to Isbell's original arguments. In Grothendieck's
metaphor~\cite{McLaRSG}, the rising sea of general category theory has made
the hammer and chisel unnecessary.

Second, Isbell worked only with full subcategories, where we use arbitrary
functors. It is true that we will often need to assume our functors to be
full and faithful, so that up to equivalence, they are indeed
inclusions of full subcategories. Nevertheless, the functor-based approach
has the benefits of being equivalence-invariant and of revealing exactly
where the full and faithful hypothesis is needed.  Ulmer emphasized that
many naturally occurring dense functors are not full and faithful
(Example~\ref{eg:adjt-dense}), and the theory of dense and adequate functors
should be developed as far as possible without that assumption.

Third, we modernize some aspects, including the treatment of set-theoretic
size. Isbell used a size constraint on $\Set$-valued functors that he
called properness and Freyd later called pettiness
(Remark~\ref{rmk:proper}). It now seems clear that the most natural such
notion is that of small functor, which extends smoothly to the
enriched context and is what we use here.

Finally, Isbell simply omitted several proofs; we provide them.

\paragraph*{Terminology} Isbell conjugacy has sometimes been called Isbell
duality 
(as in Di~Liberti~\cite{DiLi}), but that term has also been used for a
different purpose entirely (as in Barr, Kennison and
Raphael~\cite{BKR}). Reflexive completion has also been studied under the
name of Isbell completion (as in Willerton~\cite{WillTSI}).

\paragraph*{Conventions} 
Usually, and always in declarations such as `let $\A$ be a category', the
word `category' means \emph{locally small} category. However, we will
sometimes form categories such as $[\A^\op, \Set]$ that are not locally
small. For us, the words \demph{small} and \demph{large} refer to sets and
proper classes. A category is (co)complete when it admits small (co)limits.

The symbol $\times$ denotes both product and copower, so that when $S$ is a
set and $a$ is an object of some category, $S \times a = \coprod_{s \in S}
a$.

\section{Conjugacy for small categories}
\label{sec:conj-small}

Certain aspects of conjugacy are simpler for small categories. In this
section, we review several descriptions and characterizations of
conjugacy on small categories, all previously known. Our categories will be
enriched in a complete and cocomplete symmetric monoidal closed category
$\cat{V}$. Henceforth, we will usually abbreviate `$\V$-category' to
`category', and similarly for functors, adjunctions, etc.; all are
understood to be $\V$-enriched.

Let $\scat{A}$ be a small category. The \demph{(Isbell) conjugate} of a
functor $X \from \scat{A}^\op \to \cat{V}$ is the functor $\du{X}
\from \scat{A} \to \cat{V}$ defined by
\[
\du{X}(a) = [\scat{A}^\op, \cat{V}](X, \cat{A}(-, a))
\]
($a \in \scat{A}$). With $\scat{A}^\op$ in place of $\scat{A}$, this means
that the conjugate of a functor $Y \from \scat{A} \to \cat{V}$ is the
functor $\du{Y} \from \scat{A}^\op \to \cat{V}$ defined by
\[
\du{Y}(a) = [\scat{A}, \cat{V}](Y, \cat{A}(a, -)).
\]
Conjugacy defines a pair of functors
\begin{align}
\label{eq:conj-ftrs}
\xymatrix{
[\scat{A}^\op, \V] \ar@<.5ex>[r]        &
[\scat{A}, \V]^\op\ar@<.5ex>[l]         
}.
\end{align}

\begin{remark}
\label{rmk:unamb}
Writing $\vee_\scat{A}$ for the conjugacy functor $[\scat{A}^\op, \V] \to
[\scat{A}, \V]^\op$, the conjugacy functor in the opposite
direction is $\vee_{\scat{A}^\op}^\op$. 

There is a case for adopting different symbols for the two directions.
Others have done this; for example, Wood uses $X^+$ and $Y^-$
(\cite{WoodSRT}, Section~1). We could write $X^\vee = \vee_{\scat{A}}(X)$ and
$Y^\wedge = \vee_{\scat{A}^\op}(Y)$, so that $X^\vee \in \cmp{\scat{A}}$
and $Y^\wedge \in \cocmp{\scat{A}}$ (in the notation defined in
Section~\ref{sec:conj-gen}). But we will use the same symbol for both. This
is partly to emphasize that every functor $Z$ from a small category to $\V$
has a single, unambiguous, conjugate $\du{Z}$, which is the same whether
$Z$ is regarded as a covariant functor on its domain or a contravariant
functor on the opposite of its domain.
\end{remark}

Given $X\from \As^\op \to \V$ and $Y \from \As \to \V$, define 
\begin{align}
\label{eq:external}
\begin{array}{cccc}
X \etimes Y \from       &\As^\op \otimes \As    &\to            &\V     \\
                        &(a, b)                 &\mapsto        &
X(a) \otimes Y(b).
\end{array}
\end{align}
One verifies that 
\begin{align}
\label{eq:etimes-adjn}
[\As^\op, \V](X, \du{Y}) 
\iso 
[\As^\op \otimes \As, \V](X \etimes Y, \Hom_{\As})
\iso
[\As, \V](Y, \du{X})
\end{align}
naturally in $X$ and $Y$. In particular, the conjugacy
functors~\eqref{eq:conj-ftrs} are adjoint.

Evidently
\[
\du{\As(-, a)} \iso \As(a, -),
\qquad
\du{\As(a, -)} \iso \As(-, a)
\]
naturally in $a \in \As$. Thus, both triangles in the diagram
\begin{align}
\label{eq:conj-triangles}
\begin{array}{c}
\xymatrix@R-3ex{
{[\As^\op, \V]}\ar@/^/[rr]^-{\vee} & &
{[\As, \V]^\op}\ar@/^/[ll]^{\vee^\op}_{\perp} \\ 
	      &  & \\
	      & {\As}\ar[uul]^{H_{\bullet}}\ar[uur]_{H^{\bullet}} &
}
\end{array}
\end{align}
commute, where $H_{\bullet}$ and $H^{\bullet}$ are the two Yoneda
embeddings. This property characterizes conjugacy:

\begin{lemma}
\label{lemma:conj-unique}
Isbell conjugacy is the unique adjunction such that both triangles in 
\eqref{eq:conj-triangles} commute up to isomorphism.
\end{lemma}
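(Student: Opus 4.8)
The statement asserts both existence and uniqueness, but existence is already in hand: we have just checked that the conjugacy adjunction $\vee \ladj \vee^\op$ makes both triangles in \eqref{eq:conj-triangles} commute. So the real content is uniqueness. The plan is to reduce this to the universal property of $[\As^\op, \V]$ as the free cocompletion of $\As$. Concretely, suppose we are given any adjunction $F \ladj G$, with $F \from [\As^\op, \V] \to [\As, \V]^\op$ and $G$ in the reverse direction, such that $F \of H_\bullet \iso H^\bullet$ and $G \of H^\bullet \iso H_\bullet$; I want to show that $(F, G)$ is isomorphic, as an adjunction, to $(\vee, \vee^\op)$.

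First I would observe that $F$, being a left adjoint, preserves small colimits, and that its codomain $[\As, \V]^\op$ is cocomplete, because $[\As, \V]$ is complete (as $\V$ is). Since $\As$ is small, the Yoneda embedding $H_\bullet$ exhibits $[\As^\op, \V]$ as the free cocompletion of $\As$: for any cocomplete category $\C$, precomposition with $H_\bullet$ is an equivalence from the category of cocontinuous functors $[\As^\op, \V] \to \C$ to the category $[\As, \C]$ of all functors. In particular, a cocontinuous functor out of $[\As^\op, \V]$ is determined up to isomorphism by its restriction along $H_\bullet$. Applying this with $\C = [\As, \V]^\op$ and using $F \of H_\bullet \iso H^\bullet \iso \vee \of H_\bullet$ (the last isomorphism being the commuting left-hand triangle for conjugacy itself) forces $F \iso \vee$.

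Finally, since adjoints are unique up to isomorphism, the isomorphism $F \iso \vee$ forces $G \iso \vee^\op$, and transporting the defining natural isomorphism of $F \ladj G$ across these identifications exhibits the given adjunction as conjugacy. Note that the second hypothesis $G \of H^\bullet \iso H_\bullet$ is then automatic; alternatively, one can obtain $G \iso \vee^\op$ symmetrically, using that $[\As, \V]^\op$ is the free \emph{completion} of $\As$ via $H^\bullet$ together with the fact that the right adjoint $G$ is continuous and $[\As^\op, \V]$ is complete. I expect no genuine obstacle here: the argument is soft, and the only care needed is in invoking the free-cocompletion universal property in the $\V$-enriched setting and in using the completeness of $\V$ to guarantee that $[\As, \V]^\op$ is cocomplete. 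Once that standard enriched machinery is cited, the uniqueness is immediate, with no computation required.
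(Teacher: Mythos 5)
Your proof is correct and takes essentially the same route as the paper: the paper likewise deduces $F \iso \vee$ from the facts that both are left adjoints (hence cocontinuous), that they agree on representables, and that every object of $[\As^\op, \V]$ is a small colimit of representables---which is exactly the free-cocompletion universal property you invoke in packaged form. Your closing observation that the second triangle hypothesis $G \of H^\bullet \iso H_\bullet$ is automatic is also implicit in the paper's proof, which uses only the left-hand triangle.
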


\begin{proof}
Let $P \from [\As^\op, \V] \to [\As, \V]^\op$ be a left adjoint satisfying
$P \of H_{\bullet} \iso H^{\bullet}$. By hypothesis, $P(X) \iso \du{X}$
when $X$ is representable. But every object of $[\As^\op, \V]$ is a small
colimit of representables (as $\As$ is small), and both $P$ and $\du{(\ )}$
preserve colimits (being left adjoints), so $P \iso \du{(\ )}$. 
\end{proof}

Conjugacy can also be described as a nerve-realization adjunction. Any
functor $F \from \As \to \cat{E}$ induces a \demph{nerve functor}
\[
\begin{array}{cccc}
\nv{F} \from    &\cat{E}        &\to            &[\As^\op, \V]  \\
                &E              &\mapsto        &\As(F-, E).
\end{array}
\]
When $\cat{E}$ is cocomplete, the nerve functor has a left adjoint,
sometimes called the realization functor of $F$ (after the case where $F$
is the standard embedding of the simplex category $\Delta$ into $\Top$). It
is the left Kan extension of $F$ along the Yoneda embedding $H_\bullet$.

Taking $F$ to be $H^\bullet \from \As \to [\As, \V]^\op$, we thus obtain a
pair of adjoint functors between $[\As^\op, \V]$ and $[\As, \V]^\op$. This
is the conjugacy adjunction. For example, in
diagram~\eqref{eq:conj-triangles}, the functor $\du{(\ )} \from [\As^\op,
\V] \to [\As, \V]^\op$ is the left Kan extension of $H^\bullet$ along
$H_\bullet$.

Yet another derivation of conjugacy uses profunctors. Our convention is
that for small categories $\As$ and $\Bs$, a \demph{profunctor} $\Bs
\pfun \As$ is a functor $\As^\op \otimes \Bs \to \V$, and the composite of
profunctors $Q \from \Cs \pfun \Bs$ and $P \from \Bs \pfun \As$ is denoted by
$P \pof Q \from \Cs \pfun \As$. 

The operation of composition with a profunctor, on either the left or the
right, has a right adjoint. Indeed, given profunctors
\[
\xymatrix@C+.9em{
\Cs
\ar@/_1pc/[rr]|-@{|}_R
\ar[r]|-@{|}^Q
&
\Bs
\ar[r]|-@{|}^P
&
\As,
}
\]
there are profunctors
\[
\xymatrix@C+1em{
\Cs
\ar[r]|-@{|}^{\rhom{P, R}}
&
\Bs
\ar[r]|-@{|}^{\lhom{Q, R}}
&
\As
}
\]
defined by
\begin{align*}
\rhom{P, R}(b, c)       &
=
[\As^\op, \V](P(-, b), R(-, c)),        \\
\lhom{Q, R}(a, b)       &
=
[\Cs, \V](Q(b, -), R(a, -)),        
\end{align*}
which satisfy the adjoint correspondences
\begin{equation}
\label{eq:prof-adj}
\begin{tabular}{r@{\:}c@{\:}l}
$P$ &$\to$ &$\lhom{Q, R}$     
  \\[.3ex]
\hline  
\vphantom{$l^{l^l}$}
$P \pof Q$& $\to$ &$R$
\\[.3ex]
\hline
\vphantom{$l^{l^l}$}
$Q$ & $\to$ & $\rhom{P, R}$
\end{tabular}
\end{equation}
Now take $\Bs$ to be the unit $\V$-category $\scat{I}$, with $\Cs = \As$
and $R = \Hom_{\As}$. The profunctors $P$ and $Q$ are functors $X\from
\As^\op \to \V$ and $Y \from \As \to \V$, respectively. Then $\rhom{P, R} =
\du{X}$ and $\lhom{Q, R} = \du{Y}$, while $P \pof Q= X \etimes Y$, and the
general adjointness relations~\eqref{eq:prof-adj} reduce to the conjugacy
relations~\eqref{eq:etimes-adjn}.

Finally, conjugates can be described as Kan extensions or lifts in the
bicategory $\V\hyph\Prof$ of $\V$-profunctors.  Let $X \from \As^\op \to
\V$. There is a canonical natural transformation
\begin{align}
\label{eq:epsln}
\begin{array}{c}
\xymatrix@C=1.5em@R=1.5em{
\As
\ar[rr]|-@{|}^{\du{X}}
\ar[ddrr]|-@{|}_{\Hom_{\As}}^{\swnt\epsln_X}      &
&
\One \ar[dd]|-@{|}^X   \\
&
&
\\
&
&
\As
}
\end{array}
\end{align}
whose $(a, b)$-component
\[
X(a) \otimes [\As^\op, \V](X, \As(-, b)) \to \As(a, b)
\]
is defined in the case $\V = \Set$ by $(x, \xi) \mapsto \xi_b(x)$, and by
the obvious generalization for arbitrary $\V$. Equivalently, using the
second of the isomorphisms~\eqref{eq:etimes-adjn}, $\epsln_X$ is the map $X
\etimes \du{X} \to \Hom_{\As}$ corresponding to the identity on $\du{X}$.

The result is that $\epsln_X$ exhibits $\du{X}$ as the right Kan lift of
$\Hom_{\As}$ through $X$ in $\V\hyph\Prof$. (That is, the pair $(\du{X},
\epsln_X)$ is terminal of its type.) This follows from the second
adjointness relation in~\eqref{eq:prof-adj} on taking $P = X$ and $R =
\Hom_{\As}$.

Dually, for $Y \from \A \to \V$, a similarly defined transformation
$\epsln_Y \from \du{Y} \pof Y \to \Hom_\A$ exhibits $\du{Y}$ as the right
Kan extension of $\Hom_\A$ along $Y$ in $\V\hyph\Prof$.

\section{Examples of conjugacy}
\label{sec:eg-conj}

We list some examples of conjugacy, beginning with unenriched categories.

\begin{example}
\label{eg:conj-small-discrete}
Let $\As$ be a small discrete category, $Y \from \As \to \Set$, and $a \in
\As$. Then
\[
\du{Y}(a)
=
\begin{cases}
1               &\text{if } Y(b) = \emptyset \text{ for all } b \neq a  \\
\emptyset       &\text{otherwise.}
\end{cases}
\]
Thus, writing 
\begin{align}
\label{eq:defn-supp}
\supp Y = \{ b \in \As \such Y(b) \text{ is nonempty}\}, 
\end{align}
we have
\[
\du{Y} 
\iso
\begin{cases}
1               & \text{if } Y \iso 0 \\
\As(-, a)       & 
\text{if } 
\supp Y = \{a\} \\
0               & \text{otherwise.}
\end{cases}
\]
\end{example}

\begin{example}
\label{eg:conj-group}
Let $G$ be a group, seen as a one-object category. A functor $X \from G^\op \to
\Set$ is a right $G$-set, and the unique representable such functor is
$G_r$, the set $G$ acted on by $G$ by right multiplication. Thus, $\du{X}$
is the left $G$-set of $G$-equivariant maps $G \to G_r$. We now compute
$\du{X}$ explicitly.

First suppose that the $G$-set $X$ is nonempty, transitive and \demph{free}
(for $g \in G$, if $xg = x$ for some $x$ then $g = 1$). Then $X \iso G_r$,
so $\du{X}$ is isomorphic to $G_\ell$, the set $G$ acted on by the group
$G$ by left multiplication.

Next suppose that $X$ is nonempty and transitive but not
free. Choose $x \in X$ and $1 \neq g \in G$ such that $xg = x$. Any
equivariant $\alpha \from X \to G_r$ satisfies $\alpha(x) = \alpha(xg) =
\alpha(x)g$, a contradiction since $g \neq 1$. Hence $\du{X} = \emptyset$.

Finally, take an arbitrary $G$-set $X$. It is a coproduct $\sum_{i \in I}
X_i$ of nonempty transitive $G$-sets, so by adjointness, $\du{X} = \prod_{i
\in I} X_i^\vee$. By the previous paragraph, $\du{X}$ is empty unless every
orbit $X_i$ is free, or equivalently unless $X$ is free. If $X$ is free
then $X$ is the copower $I \times G_r$ and $\du{X}$ is the power
$G_\ell^I$. But $I \iso X/G$, so
\[
\du{X}
\iso
\begin{cases}
G_\ell^{X/G}    &\text{if } X \text{ is free}   \\
\emptyset       &\text{otherwise.}
\end{cases}
\]
\end{example}

\begin{example}
\label{eg:conj-poset}
Let $A$ be a partially ordered set regarded as a category, and let $X
\from A^\op \to \Set$. The set $\supp X \sub A$
(equation~\eqref{eq:defn-supp}) is downwards closed, and when $X = A(-,
a)$, it is $\dn a = \{ b \in A \such b \leq a\}$. Now
\[
\du{X}(a) \iso
\begin{cases}
1               &\text{if } \supp X \sub \dn a  \\
\emptyset       &\text{otherwise}.
\end{cases}
\]
Of course, the dual result also holds, involving $\up a = \{ b \in A \such b
\geq a \}$.

A $\Set$-valued functor on a category is \demph{subterminal} if it is a
subobject of the terminal functor, or equivalently if all of its values are
empty or singletons. Subterminal functors $A \to \Set$ correspond via
$\supp$ to upwards closed subsets of $A$. The conjugate of any functor $X
\from A^\op \to \Set$ is subterminal, corresponding to the upwards closed
set of upper bounds of $\supp X$ in $A$.
\end{example}

\begin{example}
\label{eg:conj-enr-poset}
Write $\Two = (0 \to 1)$ with $\min$ as monoidal structure. A
small $\Two$-category $A$ is a partially ordered set (up to equivalence),
and a $\Two$-functor $X \from A^\op \to \Two$ amounts to a downwards closed
subset of $A$, namely, $\{ a \in A \such X(a) = 1\}$.  Dually, a
$\Two$-functor $A \to \Two$ is an upwards closed subset of~$A$.

From this perspective, the conjugacy adjunction is as follows: for
a downwards closed set $X \subseteq A$, the upwards closed set $\du{X}$ is
the set of upper bounds of $X$, and dually.
\end{example}

\begin{example}
\label{eg:conj-Ab}
Write $\Ab$ for the category of abelian groups. A one-object $\Ab$-category
$R$ is a ring, and an $\Ab$-functor $R^\op \to \Ab$ is a right $R$-module.
The unique representable on $R$ is $R_r$, the abelian group $R$ regarded as
a right $R$-module.  Thus, the conjugate of a right module $M$ is
\[
\du{M} = \Mod_R(M, R_r)
\]
with the left module structure induced by the left action of $R$ on
itself. When $R$ is a field, $\du{M}$ is the dual of the vector space $M$.
\end{example}

\begin{example}
\label{eg:conj-metr}
Consider the ordered set $([0, \infty], \geq)$ with its additive monoidal
structure. This is a monoidal closed category, the internal hom $[x, y]$ being
the truncated difference
\[
y \truncsub x
=
\max\{y - x, 0\}.
\]
Lawvere~\cite{LawvMSG} famously observed that a $[0, \infty]$-category is a
generalized metric space, `generalized' in that distances need not be
symmetric or finite, and distinct points can be distance $0$ apart.

Let $A = (A, d)$ be a generalized metric space. A $[0, \infty]$-functor
$A^\op \to [0, \infty]$ is a function $f \from A \to [0, \infty]$ such that
\[
f(a) \truncsub f(b) \leq d(a, b)
\]
for all $a, b \in A$. Its conjugate $\du{f} \from A \to [0, \infty]$ is
defined by 
\[
\du{f}(a) = \sup_{b \in A} \bigl( d(b, a) \truncsub f(b) \bigr).
\]
\end{example}

\section{Conjugacy for general categories}
\label{sec:conj-gen}

To define conjugacy on a general category requires more delicacy than on a
small category. The reader who wants to get on to the reflexive completion
can ignore this section for now. However, because of the phenomenon noted
in Example~\ref{eg:compl-7}, the \emph{theory} of the reflexive completion
ultimately requires this more general definition of conjugacy: it is not
possible to confine oneself to small categories only.

The following example shows that the definition of conjugacy for small
categories cannot be extended verbatim to large categories.

\begin{example}
\label{eg:no-conj}
Let $\mathcal{C}$ be a proper class. Let $\cat{A}$ be the category obtained
by adjoining to the discrete category $\mathcal{C}$ a further object $z$
and maps $p^0_c, p^1_c \from z \to c$ for each $c \in \mathcal{C}$. Let $Y
\from \cat{A} \to \Set$ be the functor defined by
\[
Y(a) = 
\begin{cases}
1               &\text{if } a \in \mathcal{C}   \\
\emptyset       &\text{if } a = z.
\end{cases}
\]
A natural transformation $Y \to \cat{A}(z, -)$ is a choice of element of
$\{p^0_c, p^1_c\}$ for each $c \in \mathcal{C}$. There is a proper
class of such transformations, so there is no $\Set$-valued functor $\du{Y}
\from \cat{A}^\op \to \Set$ defined by $\du{Y}(a) = [\cat{A}, \Set](Y,
\cat{A}(a, -))$.
\end{example}

Since not every functor has a conjugate, we restrict ourselves to a class
of functors that do. These are the small functors introduced by Ulmer
(\cite{Ulme}, Remark~2.29). We briefly review them now, referring to Day
and Lack~\cite{DaLa} for details.

Again we work over a complete and cocomplete symmetric monoidal closed
category $\V$, understanding all categories, functors, etc., to be
$\V$-enriched. 

For a category $\A$, a functor $\A \to \V$ is \demph{small} if it can
expressed as a small colimit of representables, or equivalently if it is
the left Kan extension of its restriction to some small full subcategory of
$\A$, or equivalently if it is the left Kan extension of some $\V$-valued
functor on some small category $\Bs$ along some functor $\Bs \to \A$.

\begin{example}
When $\A$ is small, every functor $\A \to \V$ is small.
\end{example}

\begin{example}
Taking $\V = \Set$, the constant functor $1$ on a large discrete category
is not small; nor is the functor $Y$ of Example~\ref{eg:no-conj}.
\end{example}

\begin{example}
\label{eg:small-order}
For later purposes, let us consider an ordered class $A$ and a subterminal
functor $X \from A^\op \to \Set$ (as defined in
Example~\ref{eg:conj-poset}).  Then $X$ is small if and only if
there is some small $K \sub \supp X$ such that for all $a \in \supp
X$, the poset $K \cap \up a$ is connected (and in particular,
nonempty). This follows from the definition of a small functor
as one that is the left Kan extension of its restriction to some small full
subcategory. 
\end{example}

For arbitrary functors $X, X' \from \A^\op \to \V$, the $\V$-natural
transformations $X \to X'$ do not always define an object of $\V$, as
Example~\ref{eg:no-conj} shows in the case $\V = \Set$.  But when $X$ is
small, they do: it is the (possibly large) end
\begin{align}
\label{eq:VNat}
\VNat(X, X') = \int_a [X(a), X'(a)] \,\in \V.
\end{align}
To see that this end exists, first note that
by smallness of $X$, we can choose a small full subcategory $\C$ of
$\A$ such that $X$ is the left Kan extension of its restriction to
$\C$. Since $\C$ is small and $\V$ has small limits, the functor
$\V$-category $[\C^\op, \V]$ exists, and the universal property of Kan
extensions implies that $[\C^\op, \V]\bigl(X|_\C, X'|_\C\bigr)$ is the
end~\eqref{eq:VNat}. 

In particular, the small functors $\A^\op \to \V$ form a
$\V$-category $\cocmp{\A}$. We also write $\cmp{\A}$ for the \emph{opposite}
of the $\V$-category of small functors $\A \to \V$. When $\A$ is
small,
\[
\cocmp{\A} = [\A^\op, \V],
\qquad
\cmp{\A} = [\A, \V]^\op.
\]
When $\A$ is large, the right-hand sides are in general undefined as
$\V$-categories. In the case $\V = \Set$, the right-hand sides can be
interpreted as categories that are not locally small, but typically
\[
\cocmp{\A} \subsetneq [\A^\op, \Set],
\qquad
\cmp{\A} \subsetneq [\A, \Set]^\op.
\]

A small colimit of small $\V$-valued functors is small, so the
$\V$-category $\cocmp{\A}$ has small colimits, computed pointwise. Indeed,
it is the free cocompletion of $\A$: the Yoneda embedding $\A \incl
\cocmp{\A}$ is the initial functor (in a 2-categorical sense) from $\A$ to
a category with small colimits. Dually, $\cmp{\A}$ is the free
completion of~$\A$.

\begin{remark}
\label{rmk:proper}
Isbell used a different size condition, defining a $\Set$-valued functor to
be \demph{proper} if it admits an epimorphism from a small coproduct of
representables (\cite{IsbeAS}, Section~1). (Freyd later called such
functors `petty' \cite{FreySNC}.) Properness is a weaker condition than
smallness, but the universal properties of $\cocmp{\A}$ and $\cmp{\A}$ make
smallness a natural choice, and it generalizes smoothly to arbitrary $\V$.
\end{remark}

\begin{defn}
\label{defn:rep-small}
A functor $F \from \A \to \B$ is \demph{representably small} if for
each $b \in \B$, the functor 
\[
\nv{F}(b) = \B(F-, b) \from \A^\op \to \V
\]
is small, and \demph{corepresentably small} if for each $b \in \B$, 
\[
\conv{F}(b) = \B(b, F-) \from \A \to \V
\]
is small. (This is dual to the convention in Section~8 of Day and
Lack~\cite{DaLa}.) 
\end{defn}

Thus, a representably small functor $F \from \A \to \B$ induces a nerve
functor $\nv{F} \from \B \to \cocmp{\A}$, and dually.

\begin{lemma}
\label{lemma:small-comp}
Let $\A \toby{F} \B \toby{G} \C$ be functors. If $F$ and $G$ are 
representably small then so is $GF$, and dually for corepresentably small.
\end{lemma}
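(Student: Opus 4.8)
The plan is to verify the defining condition for $GF$ to be representably small directly, exhibiting each functor $\nv{GF}(c)$ as a small colimit of functors already known to be small. First I would fix $c \in \C$ and note that the nerve of $GF$ at $c$ is the restriction of the nerve of $G$ at $c$ along $F$: for $a \in \A$,
\[
\nv{GF}(c)(a) = \C(GFa, c) = \nv{G}(c)(Fa),
\]
so that $\nv{GF}(c) = \nv{G}(c) \of F^\op$ as functors $\A^\op \to \V$. Everything then reduces to showing that precomposition along $F^\op \from \A^\op \to \B^\op$ sends the small functor $\nv{G}(c)$ to a small functor.

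Since $G$ is representably small, $\nv{G}(c) \from \B^\op \to \V$ is small, hence a small colimit of representables, say $\nv{G}(c) \iso \colim_i \B(-, b_i)$ for a small diagram of objects $b_i \in \B$. Colimits in presheaf categories are computed pointwise, so precomposition along $F^\op$ preserves them; and it carries the representable $\B(-, b_i)$ to the functor $a \mapsto \B(Fa, b_i)$, which is precisely $\nv{F}(b_i)$. Hence
\[
\nv{GF}(c) \iso \colim_i \nv{F}(b_i).
\]
As $F$ is representably small, each $\nv{F}(b_i)$ is small, and this is a small colimit, so by the fact recorded earlier that a small colimit of small functors is small, $\nv{GF}(c)$ is small. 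Since $c$ was arbitrary, $GF$ is representably small.

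For the dual statement I would avoid repeating the argument and instead appeal to opposites. A functor $H \from \cat{D} \to \cat{E}$ is corepresentably small exactly when $H^\op$ is representably small, because $\nv{H^\op}(e) = \conv{H}(e)$; and $(GF)^\op = G^\op \of F^\op$. So applying the case just proved to the composite $\cat{A}^\op \toby{F^\op} \cat{B}^\op \toby{G^\op} \cat{C}^\op$ shows that $(GF)^\op$ is representably small, i.e.\ $GF$ is corepresentably small.

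The step needing the most care is the interaction of smallness with precomposition in the enriched setting, where ``small colimit of representables'' must be read as a small weighted colimit. To be safe I would present it via a coend: choosing a small full subcategory $\B_0 \sub \B$ witnessing the smallness of $\nv{G}(c)$, one has $\nv{G}(c)(b) \iso \int^{b_0 \in \B_0} \B(b, b_0) \otimes \nv{G}(c)(b_0)$, and precomposing with $F$ turns this into $\nv{GF}(c) \iso \int^{b_0 \in \B_0} \nv{G}(c)(b_0) \otimes \nv{F}(b_0)$, which is manifestly a small colimit of the small functors $\nv{F}(b_0)$.
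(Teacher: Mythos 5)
Your proof is correct and follows essentially the same route as the paper's: both express $\C(G-,c)$ as a small weighted colimit of representables (the paper writes $W \mathbin{*} \B(-,D)$, you the coend $\int^{b_0 \in \B_0} \B(-,b_0) \otimes \nv{G}(c)(b_0)$), precompose with $F$, and invoke closure of small functors under small colimits. Your closing paragraph correctly flags and repairs the only subtlety, that in the enriched setting the colimit must be weighted rather than conical, which the paper's notation handles implicitly.
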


\begin{proof}
Suppose that $F$ and $G$ are representably small, and let $c \in \C$. We
must show that $\C(GF-, c)$ is small. By hypothesis, $\C(G-, c)$ is a small
colimit of representables, say $\C(G-, c) = W \mathbin{*} \B(-, D)$ where
$\scat{I}$ is a small category, $W \from \scat{I}^\op \to \V$ and $D \from
\scat{I} \to \B$. Then $\C(GF-, c) = W \mathbin{*} \B(F-, D)$, which by
hypothesis is a small colimit of small functors, hence small.
\end{proof}

This completes our review of smallness. Now let $\A$ be a category. The
\demph{conjugate} of a small functor $X \from \A^\op \to \V$ is the functor
$\du{X}\from \A \to \V$ defined by
\[
\du{X}(a) = \cocmp{\A}(X, \A(-, a)).
\]
Since $\cocmp{\A^\op} = \bigl(\cmp{\A}\,\bigr)^\op$, this implies that the
conjugate of a small functor $Y \from \A \to \V$ is the functor
$\du{Y} \from \A^\op \to \V$ defined by
\[
\du{Y}(a) = \cmp{\A}(Y, \A(a, -)).
\]

The conjugate of a small functor need not be small:

\begin{example}
\label{eg:conj-large-discrete}
Let $\A$ be a discrete category on a proper class of objects.  The small
functors $Y \from \A \to \Set$ are precisely those such that $\supp Y$ is
small. So the initial (empty) functor $0 \from \A \to \Set$ is small, but
its conjugate is the terminal functor $1$, which is not small.
\end{example}

When $\V = \Set$, conjugacy defines functors
\[
\cocmp{\A} \to [\A, \Set]^\op,
\qquad
\cmp{\A} \to [\A^\op, \Set],
\]
whose codomains are in general not locally small. For a general $\V$ and
$\A$, conjugacy is still contravariantly functorial in $X$ and $Y$, but
there are no $\V$-categories $[\A, \V]^\op$ and $[\A^\op, \V]$ to act as
the codomains. So it no longer makes sense to speak of a 
conjugacy \emph{adjunction}. However, we do have the following.

\begin{lemma}
\label{lemma:large-adj}
Let $\A$ be a category. Then
\[
\VNat(X, \du{Y}) \iso \VNat(Y, \du{X})
\]
naturally in $X \in \cocmp{\A}$ and $Y \in \cmp{\A}$.
\end{lemma}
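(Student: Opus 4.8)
The plan is to reduce the statement to the small-category adjunction \eqref{eq:etimes-adjn} by restricting everything to a single small full subcategory of $\A$ chosen to witness the smallness of both $X$ and $Y$ simultaneously. Since $X \in \cocmp{\A}$ and $Y \in \cmp{\A}$ are small, there are small full subcategories $\C_1, \C_2 \sub \A$ such that $X$ is the left Kan extension of $X|_{\C_1}$ along the inclusion and $Y$ is the left Kan extension of $Y|_{\C_2}$. Let $\C \sub \A$ be the full subcategory on the (small) union of their objects. The first thing I would check is that $X$ and $Y$ are still the left Kan extensions of their restrictions to the larger subcategory $\C$: this is a standard consequence of the facts that left Kan extensions compose and that left Kan extension along a fully faithful functor genuinely extends (its unit is invertible), so smallness witnessed by $\C_1$ or $\C_2$ is also witnessed by any small full subcategory containing it.

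With such a $\C$ fixed, the key observation is that restriction to $\C$ commutes with conjugacy. Indeed, for $a \in \C$ fullness gives $\A(-, a)|_\C = \C(-, a)$ and $\A(a, -)|_\C = \C(a, -)$, so the end formula \eqref{eq:VNat} (valid because $X$ and $Y$ are left Kan extensions of their restrictions to $\C$) yields
\[
\du{X}(a) = \VNat(X, \A(-, a)) \iso [\C^\op, \V](X|_\C, \C(-, a)),
\qquad
\du{Y}(a) = \VNat(Y, \A(a, -)) \iso [\C, \V](Y|_\C, \C(a, -)).
\]
The right-hand sides are exactly the conjugates of $X|_\C$ and $Y|_\C$ computed inside the small category $\C$; that is, $(\du{X})|_\C \iso \du{(X|_\C)}$ and $(\du{Y})|_\C \iso \du{(Y|_\C)}$.

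Now I would chain isomorphisms. Applying the end formula \eqref{eq:VNat} once more, to the small functors $X$ and $Y$ with the (a priori large) second arguments $\du{Y}$ and $\du{X}$, and then invoking the previous paragraph, gives
\[
\VNat(X, \du{Y}) \iso [\C^\op, \V](X|_\C, (\du{Y})|_\C) \iso [\C^\op, \V](X|_\C, \du{(Y|_\C)}),
\]
and symmetrically $\VNat(Y, \du{X}) \iso [\C, \V](Y|_\C, \du{(X|_\C)})$. The small-category adjunction \eqref{eq:etimes-adjn}, applied in $\C$ to $X|_\C$ and $Y|_\C$, identifies these two middle terms, completing the construction of the isomorphism.

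The step I expect to require the most care is naturality in $X$ and $Y$, precisely because $\C$ was chosen depending on $X$ and $Y$. The isomorphism just built is a composite of canonical comparison maps (the units of the Kan extensions and the natural adjunction isomorphism of \eqref{eq:etimes-adjn}), and these are compatible with enlarging $\C$; hence the result does not depend on the choice of $\C$. Any naturality square involves only finitely many functors, so one may test it over a common $\C$ witnessing the smallness of all of them at once, where commutativity reduces to the naturality of \eqref{eq:etimes-adjn} over the small category $\C$. Equivalently, and perhaps more transparently, one can check that both sides are canonically isomorphic to the symmetric double end $\int_{(a, b)} [X(a) \otimes Y(b), \A(a, b)]$, whose existence and Fubini decomposition are again justified by reduction to $\C$, making naturality manifest.
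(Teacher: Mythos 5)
Your proof is correct, but your primary route differs from the paper's. The paper's entire proof is one line: it observes that each side is naturally isomorphic to the symmetric middle term $\VNat(X \etimes Y, \Hom_\A)$, i.e.\ the double end $\int_{a,b}\,[X(a) \otimes Y(b), \A(a,b)]$, where $\etimes$ is as in \eqref{eq:external} --- precisely the ``more transparent'' variant you offer in your closing sentence, and the same mechanism that proves \eqref{eq:etimes-adjn} in the small case. Your main argument instead reduces the general statement to the already-established small adjunction \eqref{eq:etimes-adjn}: you choose a single small full subcategory $\C \sub \A$ witnessing the smallness of both $X$ and $Y$, prove the (correct, and independently useful) enlargement fact that any small full subcategory containing a smallness witness is again a witness, show that conjugacy commutes with restriction in the sense $(\du{X})|_\C \iso \du{(X|_\C)}$ and $(\du{Y})|_\C \iso \du{(Y|_\C)}$, and then apply \eqref{eq:etimes-adjn} over $\C$. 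This buys an explicit reduction of the large case to the small one and makes the size bookkeeping completely concrete (note that the paper itself implicitly relies on exactly this kind of reduction when justifying the existence of the end \eqref{eq:VNat}, and any careful Fubini argument for the double end would need it too); the cost, which you correctly identify and handle, is that the isomorphism is built from a choice of $\C$, so you must check compatibility under enlargement and verify naturality squares over a common witness. The paper's middle-term formulation avoids all choices and makes naturality manifest from the outset, which is why it can afford to call the verification routine; your version spells out what that routine verification actually consists of.
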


Since $X$ and $Y$ are small, each side of the claimed isomorphism is a
well-defined object of $\V$ (equation~\eqref{eq:VNat}).

\begin{proof}
It is routine to verify that each side is naturally isomorphic to
$\VNat(X \etimes Y, \Hom_\A)$, where $\etimes$ was defined
in~\eqref{eq:external}.
\end{proof}

The isomorphism of Lemma~\ref{lemma:large-adj} gives rise in the usual way
to a canonical map $\eta_X \from X \to \ddu{X}$ whenever $X \from \A^\op
\to \V$ is a small functor such that $\du{X}$ is also small. Dually, for
any small functor $Y \from \A \to \V$ with small conjugate, there is a
canonical map $\eta_Y \from Y \to \ddu{Y}$.

\begin{remark}
\label{rmk:eta-unamb}
The reuse of the letter $\eta$ is not an abuse, in that $\eta_X$
is the same whether $X$ is regarded as a contravariant functor on $\A$ or a
covariant functor on $\A^\op$. (Compare Remark~\ref{rmk:unamb}.)
\end{remark}

In the case $\V = \Set$, the unit transformation $\eta$ can be described
explicitly as follows. Let $X \from \A^\op \to \Set$ be a small functor
with small conjugate. Let $a \in \A$ and $x \in X(a)$. Then $\eta_{X, a}(x)
\in \ddu{X}(a)$ is the natural transformation
\[
\eta_{X, a}(x) \from \du{X} \to \A(a, -)
\]
that evaluates at $x$: its component at $b \in \A$ is the function
\[
\begin{array}{ccc}
\cocmp{\A}(X, \A(-, b)) &\to            &\A(a, b)       \\
\xi                     &\mapsto        &\xi_a(x).
\end{array}
\]

\begin{remark}
\label{rmk:gentle}
Define a category $\A$ to be \demph{gentle} if $\cocmp{\A}$ is complete and
$\cmp{\A}$ is cocomplete. Small categories are certainly gentle. Day and
Lack proved that $\cocmp{\A}$ is complete if $\A$ is (Corollary~3.9
of~\cite{DaLa}), so by duality, any complete and cocomplete category is
also gentle. On the other hand, a large discrete category $\A$ is not
gentle, as $\cocmp{\A}$ has no terminal object.

For a gentle category $\A$, the conjugate of a small functor on $\A$ is
again small, so that conjugacy defines a genuine adjunction between
$\cocmp{\A}$ and $\cmp{\A}$. This was shown by Day and Lack in Section~9
of~\cite{DaLa}. 

In fact, when $\A$ is complete and cocomplete, the conjugate of a small
functor $\A \to \V$ or $\A^\op \to \V$ is not only small but
representable. Indeed, if $X$ is a small colimit of representables
$\cat{A}(-, a_i)$ then $\du{X}$ is represented by the corresponding colimit
of the $a_i$, and dually.  Specializing further, if $\A$ is totally
complete and totally cocomplete then the conjugate of \emph{any} functor on
$\A$ (not necessarily small) is representable, the representing object of
$\du{X}$ being the image of $X$ under the left adjoint to the Yoneda
embedding. (We thank the referee for these observations.)
\end{remark}

\section{The reflexive completion}
\label{sec:completion}

The reflexive completion of a category was first defined by Isbell
(Section~1 of~\cite{IsbeAS}), for unenriched categories. We consider it for
categories enriched in a complete and cocomplete symmetric monoidal closed
category $\V$, beginning with the case of small $\V$-categories and then
generalizing to arbitrary $\V$-categories. For small categories over $\V =
\Set$, our definition is precisely Isbell's. For general categories over
$\Set$, there is the set-theoretic difference that our definition uses small
functors where his used proper functors (Remark~\ref{rmk:proper}).

Recall that every adjunction
\[
\xymatrix{
\C \ar@/^/[r]^F & \cat{D}\ar@/^/[l]^G_{\perp}
}
\]
between $\V$-categories restricts canonically to an equivalence between
full subcategories of $\C$ and $\cat{D}$ (Lambek and
Rattray~\cite{LaRaLDA}, Theorem~1.1). The subcategory of $\C$ consists
of those objects $c$ for which the unit map $c \to GFc$ (in the underlying
category of $\C$) is an isomorphism, and dually for $\cat{D}$. We call
either of these equivalent subcategories the \demph{invariant part} of the
adjunction.

The \demph{reflexive completion} $\refl(\A)$ of a small $\V$-category $\A$
is the invariant part of the conjugacy adjunction
\[
\xymatrix{
\cocmp{\A} \ar@/^/[r]^\vee & \cmp{\A}.\ar@/^/[l]^{\vee^\op}_{\perp}
}
\]
When $\refl(\A)$ is seen as a full subcategory of $\cocmp{\A}$, it consists
of those functors $X \from \A^\op \to \V$ such that the unit map $\eta_X
\from X \to \ddu{X}$ is an isomorphism; such functors $X$ are called
\demph{reflexive}. Dually, $\refl(\A)$ can be seen as the full subcategory
of $\cmp{\A}$ consisting of the reflexive functors $\A \to \V$. 

Now let $\A$ be an any $\V$-category, not necessarily small. To define
reflexivity of a functor $X$ on $\A$, we need $\ddu{X}$ to be defined, so
we ask that $X$ and $\du{X}$ are small.

\begin{defn}
A functor $X \from \A^\op \to \V$ is \demph{reflexive} if $X \in
\cocmp{\A}$, $\du{X} \in \cmp{\A}$, and the canonical natural
transformation $\eta_X \from X \to \ddu{X}$ is an isomorphism. 
\end{defn}

This extends the earlier definition for small $\A$. Although conjugacy for
an arbitrary $\A$ does not define an adjunction between $\cocmp{\A}$ and
$\cmp{\A}$, it still induces an equivalence between the full subcategory of
$\cocmp{\A}$ consisting of the reflexive functors $\A^\op \to \V$ and the
full subcategory of $\cmp{\A}$ consisting of the reflexive functors $\A \to
\V$. The \demph{reflexive completion} $\refl(\A)$ of $\A$ is either of
these equivalent categories. 

In the case $\V = \Set$, we have the concrete description of $\eta_X$ given
after Lemma~\ref{lemma:large-adj}. It implies that $X \in \cocmp{A}$ is
reflexive if and only if for each $a \in \A$, every element of $\ddu{X}(a)$
is evaluation at a unique element of $X(a)$.

\begin{remark}
\label{rmk:refl-dual}
By Remark~\ref{rmk:eta-unamb}, whether a $\V$-valued functor is reflexive does
not depend on whether it is considered as a covariant functor on its domain
or a contravariant functor on the opposite of its domain. It follows that
$\refl(\A^\op) \eqv \refl(\A)^\op$ for all $\V$-categories $\A$.
\end{remark}

\begin{example}
\label{eg:rep-refl}
Let $\A$ be a $\V$-category. For each $a \in \A$,
\[
\du{\A(-, a)} \iso \A(a, -),
\qquad
\du{\A(a, -)} \iso \A(-, a),
\]
and the unit map $\A(-, a) \to \ddu{\A(-, a)}$ is an isomorphism. Hence
representables are reflexive.
\end{example}

The image of the Yoneda embedding $\A \incl \cocmp{\A}$ therefore lies in
$\refl(\A)$, when the latter is seen as a subcategory of $\cocmp{\A}$. A
dual statement holds for $\cmp{\A}$. There is, then, an unambiguous Yoneda
embeddding
\[
J_{\A} \from \A \to \refl(\A)
\]
such that the diagram of full and faithful functors
\begin{align}
\label{eq:inclusions}
\begin{array}{c}
\xymatrix@C+3em{
        &       &\cocmp{\cat{A}}        \\
\cat{A} 
\ar@<1ex>@{^{(}->}[rru] 
\ar@{^{(}->}[r]|{J_{\A}} 
\ar@<-1ex>@{^{(}->}[rrd]    &
\refl(\cat{A}) 
\ar@{^{(}->}[ru] 
\ar@{^{(}->}[rd]
        &       \\
        &       &\cmp{\cat{A}}
}
\end{array}
\end{align}
commutes.

\section{Examples of reflexive completion}
\label{sec:eg-compl}

We begin with unenriched examples.

\begin{example}
\label{eg:compl-empty}
Let $\Zero$ denote the empty category.  Then $[\Zero^\op, \Set]$ and
$[\Zero, \Set]^\op$ are both the terminal category $\One$, so $\refl(\Zero)
\eqv \One$.  In particular, the reflexive completion of a category need
not be equivalent to its Cauchy completion.
\end{example}

\begin{example}
\label{eg:compl-one}
The conjugacy adjunction for the terminal category $\One$ consists of the
functors $\Set \oppairu \Set^\op$ with constant value $1$, giving
$\refl(\One) \eqv \One$.
\end{example}

\begin{example}
\label{eg:compl-small-discrete}
Let $\As$ be a small discrete category with at least two objects.  By
Example~\ref{eg:conj-small-discrete}, for $Y \from \As \to \Set$,
\[
\ddu{Y} 
\iso
\begin{cases}
0               & \text{if } Y \iso 0 \\
\As(a, -)       & 
\text{if } 
\supp Y = \{a\} \\
1               & \text{otherwise.}
\end{cases}
\]
Hence the reflexive functors $\As \to \Set$ are those that are initial,
terminal or representable. (Contrast this with Example~\ref{eg:compl-one},
in which the initial functor $\One \to \Set$ is not reflexive.) It follows
that $\refl(\As)$ is $\As$ with initial and terminal objects adjoined.
\end{example}

\begin{example}
\label{eg:compl-large-discrete}
Now let $\A$ be a large discrete category. As observed in
Example~\ref{eg:conj-large-discrete}, the conjugate of the small functor $0
\from \A \to \Set$ is the non-small functor $1\from \A^\op \to \Set$. Hence
neither $0$ nor $1$ is reflexive. The same argument as in
Example~\ref{eg:compl-small-discrete} then shows that the only reflexive
functors on $\A$ are the representables.  Thus, unlike in the small case,
the reflexive completion of a large discrete category is itself.
\end{example}

\begin{example}
\label{eg:compl-groups}
Let $G$ be a group, regarded as a one-object category. If $G$ is trivial
then $\refl(G) \eqv \One$ by Example~\ref{eg:compl-one}. Suppose not. 

Let $X$ be a right $G$-set.  In Example~\ref{eg:conj-group}, we showed that
\[
\du{X}
\iso
\begin{cases}
G_\ell^{X/G}    &\text{if } X \text{ is free}   \\
\emptyset       &\text{otherwise,}
\end{cases}
\]
and of course a similar result holds for left $G$-sets.  If $X$ is not
free then $\ddu{X} = 1$, so the only non-free reflexive $G$-set is the
terminal $G$-set $1$.

Now suppose that $X$ is free. If $X$ is empty then $\ddu{X} = \du{1} =
\emptyset$ (using the nontriviality of $G$ in the second equality), so $X$
is reflexive. Assume now that $X$ is nonempty, write $S = X/G$, and choose
$s_0 \in S$.  The left $G$-action on $\du{X} \iso G_\ell^S$ is free, and
each orbit contains exactly one element whose $s_0$-component is the
identity element of $G$, so $\du{X}$ has $|G|^{|S \setminus \{s_0\}|}$
orbits. Writing $|S| - 1$ for $|S \setminus \{s_0\}|$, we conclude that
$\du{X}$ is a free $G$-set with
\[
|\du{X}/G| = |G|^{|S|-1}.
\]
Repeating the argument in the dual situation then gives
\[
|\ddu{X}/G| = |G|^{|G|^{|S|-1}-1}.
\]
Hence if $X$ is reflexive,
\[
|S| =  |G|^{|G|^{|S|-1}-1}.
\]
By elementary arguments, this implies that $|S| = 1$ (in which case $X$ is
representable) or $|G| = |S| = 2$.  Hence when $|G| > 2$, the only
reflexive right $G$-sets are $\emptyset$, $1$ and $G_r$.

The remaining case is where $G$ is the two-element group and $X$ is the
free $G$-set on two generators. A direct calculation shows that 
\[
G_r + G_r \iso G_r \times G_r
\]
in $[G^\op, \Set]$. Since $G$ is abelian, the same is true for
$G_\ell$. Now using the adjoint property of conjugates,
\begin{align*}
\du{X}  &
\iso   
\du{(G_r + G_r)} 
\iso
G_\ell \times G_\ell    
\iso
G_\ell + G_\ell,        \\
\ddu{X} &
\iso    
\du{(G_\ell + G_\ell)}  
\iso
G_r \times G_r  
\iso
G_r + G_r,
\end{align*}
giving $\ddu{X} \iso X$. We claim that $X$ is reflexive, that is, the unit
map $\eta_X \from X \to \ddu{X}$ is an isomorphism.  One of the triangle
identities for the conjugacy adjunction implies that $\eta_{\du{W}}$ is
split monic for any $W \from G \to \Set$.  But $X \iso \du{(\du{X})}$, so $\eta_X$ is
an injection between finite sets of the same cardinality, hence bijective,
hence an isomorphism.

In summary, the reflexive completion of a group $G$ is as follows:
\begin{itemize}
\item 
if $|G| = 1$ then $\refl(G) \iso G$;

\item
if $|G| = 2$ then $\refl(G)$ is the full subcategory of the category
of $G$-sets consisting of the initial $G$-set, the terminal
$G$-set, the representable $G$-set $G_r$, and the four-element $G$-set
$G_r + G_r \iso G_r \times G_r$.  

\item 
if $|G| > 2$ then $\mathcal{R}(G)$ is the full subcategory of the category
of right $G$-sets consisting of the initial $G$-set, the terminal $G$-set,
and the representable $G$-set. It is equivalent to $G$ with initial and
terminal objects adjoined.
\end{itemize}
\end{example}

\begin{remark}
For a $\Set$-valued functor $X$, the sequence $X, \du{X}, \ddu{X},
\dddu{X}, \ldots$ need not ever repeat itself.  For consider functors on
the three-element group $C_3$. By Example~\ref{eg:compl-groups}, the
conjugate of the free $C_3$-set $n \times C_3$ on $n$ elements is $3^{n -
1} \times C_3$.  Since $3^{n - 1} > n$ for all $n \geq 2$, no two of the
iterated conjugates of $2 \times C_3$ are isomorphic.
\end{remark}

\begin{example}
Let $M$ be the two-element commutative monoid whose non-identity element
$e$ is idempotent.  A covariant or contravariant functor from $M$ to $\Set$
amounts to a set $X$ together with an idempotent endomorphism $f$.  The
representable such functor corresponds to the set $M = \{\id,e\}$ together
with the endomorphism with constant value $e$.

Given a pair $(X, f)$, write $X_0 = \im f$ and $X_1 = X \without X_0$. A
routine calculation shows that
\[
\bigl|\bigl(\ddu{X}\bigr)_0\bigr| 
= 
1 
\qquad 
\text{and} 
\qquad 
\bigl| \bigl(\ddu{X}\bigr)_1 \bigr| 
=
2^{2^{|X_1|} - 1} - 1.
\]
So if $X$ is reflexive then $|X_0| = 1$ and $|X_1| = 2^{2^{|X_1|}-1}-1$, and
the latter equation implies that $|X_1|$ is $0$ or $1$. Thus, $X$ is either
the representable functor or the terminal functor on $M$. Both are indeed
reflexive.

The reflexive completion of $M$ is, therefore, the full subcategory of the
category of $M$-sets consisting of $M$ itself and $1$.  This is the free
category on a split epimorphism, and is the same as the Cauchy
completion of $M$.
\end{example}

\begin{example}
\label{eg:compl-7}
There is a $7$-element monoid whose reflexive completion is large. In
particular, the reflexive completion of a finite category need not even be
small.  This is an example of Isbell to which we return in
Example~\ref{eg:char-7}.
\end{example}

\begin{remark}
The Cauchy completion of a category $\A$ has a well-known concrete
description: an object is an object $a \in \A$ together with an idempotent
$e$ on $a$, a map $(a, e) \to (a', e')$ is a map $f \from a \to a'$ in $\A$
such that $e'fe = f$, composition is as in $\A$, and the identity on $(a,
e)$ is $e$. It follows that the Cauchy completion of a finite or small
category is finite or small, respectively. Example~\ref{eg:compl-7} implies
that the reflexive completion can have no very similar description.
\end{remark}

\begin{example}
\label{eg:compl-posets}
Let $A$ be a poset, regarded as a category.  The conjugacy adjunction of
$A$, when restricted to subterminal functors (Example~\ref{eg:conj-poset}),
is an adjunction
\[
\xymatrix{
\{\text{downwards closed subsets of } A\}
\ar@<.5ex>[r]^-\uparrow       &
\{\text{upwards closed subsets of } A\}^\op.
\ar@<.5ex>[l]^-\downarrow
}
\]
Here both sets are ordered by inclusion, and $\up S$ and $\dn S$ are the
sets of upper and lower bounds of a subset $S \subseteq A$,
respectively. The adjointness states that $X \sub \dn Y \iff Y
\sub \up X$.

A reflexive functor $A^\op \to \Set$ amounts to a downwards closed set $X
\sub A$ such that $X = \dn \up X$. The reflexive completion $\refl(A)$ is
the set of such subsets $X$, ordered by inclusion. This is the
\demph{Dedekind--MacNeille completion} of the poset $A$ (Definition~7.38 of
Davey and Priestley~\cite{DaPr}).  For example, the reflexive completion of
$(\Q, \leq)$ is the extended real line $[-\infty, \infty]$.

The Dedekind--MacNeille completion of a poset is always a complete
lattice. (And conversely, any complete lattice is the Dedekind--MacNeille
completion of itself.) However, the reflexive completion of an arbitrary
small category is far from complete, as Theorem~\ref{thm:rcn-lim} shows.
\end{example}

\begin{example}
\label{eg:compl-posets-2}
Now let $A$ be a poset, but regarded as a category enriched in $\Two$. By
Example~\ref{eg:conj-enr-poset} and the same argument as in
Example~\ref{eg:compl-posets}, the reflexive completion of $A$ as a
$\Two$-category is again its Dedekind--MacNeille completion.
\end{example}

\begin{example}
\label{eg:compl-ring}
As in Example~\ref{eg:conj-Ab}, let $\cat{V} = \Ab$, let $R$ be a ring, and
let $M$ be a right $R$-module.  The double conjugate of $M$ is its double dual
\[
\ddu{M} = {}_R\Mod(\Mod_R(M, R_r), R_\ell),
\]
so the notion of reflexive functor on $R$ coincides with the algebraists'
notion of reflexive module (as in Section~5.1.7 of McConnell and
Robson~\cite{McRo}).  Hence the reflexive completion of a ring, viewed as a
one-object $\Ab$-category, is its category of reflexive modules.  In
particular, the reflexive completion of a field $k$ is the category of
finite-dimensional $k$-vector spaces.
\end{example}

The rest of this section concerns the case $\V = [0, \infty]$
(Example~\ref{eg:conj-metr}), summarizing results from Willerton's
analysis~\cite{WillTSI} of the reflexive completion of a generalized metric
space $A$.  It follows from Example~\ref{eg:conj-metr} that $\refl(A)$ is
the set of distance-decreasing functions $f \from A^\op \to [0, \infty]$
such that
\[
f(c) = 
\sup_{b \in A} \inf_{a \in A}
\Bigl( d(c, b) \truncsub \bigl( d(a, b) \truncsub f(a) \bigr) \Bigr)
\]
for all $c \in A$, with metric 
\begin{align}
\label{eq:refl-met-dist}
d(f, g) = \sup_a \bigl(g(a) \truncsub f(a)\bigr).
\end{align}

The reflexive completion of $A$ consists of the $[0, \infty]$-valued
functors equal to their \emph{double} conjugate. But when $A$ is symmetric,
covariant and contravariant functors on $A$ can be identified, so we can
form the set
\[
T(A) = \{ \text{distance-decreasing functions } f \from A \to [0, \infty]
\text{ such that } \du{f} = f \}
\]
of $[0, \infty]$-valued functors equal to their \emph{single}
conjugate. Then $T(A)$, metrized as in equation~\eqref{eq:refl-met-dist},
is called the \demph{tight span} of $A$. Evidently $A \sub T(A) \sub
\refl(A)$. Both inclusions can be strict, as the following example shows.

\begin{example}
\label{eg:two-point}
Take the symmetric metric space $\{0, D\}$ consisting of two points
distance $D$ apart (Figure~\ref{fig:two-point}). It follows from the
description above that $\refl(\{0, D\})$ is the set $[0, D]^2$ with metric
\[
d\bigl((t_1, t_2), (u_1, u_2)\bigr) =
\max \{ u_1 \truncsub t_1, u_2 \truncsub t_2 \}
\]
(Example~3.2.1 of~\cite{WillTSI}).  The Yoneda embedding $\{0, D\} \to [0,
D]^2$ is given by $0 \mapsto (0, D)$ and $D \mapsto (D, 0)$.
\begin{figure}
\setlength{\unitlength}{1mm}
\begin{picture}(120,35)
\cell{60}{5}{b}{\includegraphics[height=30\unitlength]{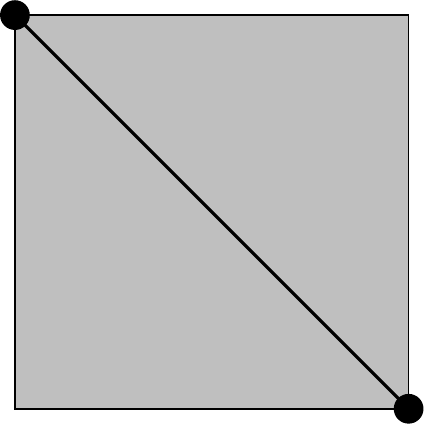}}
\cell{40}{35}{t}{$(0, D)$}
\cell{80}{5}{b}{$(D, 0)$}
\cell{62}{22}{c}{\rotatebox{-45}{$T(\{0, D\})$}}
\cell{60}{0}{b}{$\refl(\{0, D\})$}
\end{picture}%
\caption{The symmetric metric space $\{0, D\}$ embedded into its tight span
$T(\{0, D\})$ and reflexive completion $\refl(\{0, D\})$
(Example~\ref{eg:two-point}).} 
\label{fig:two-point}
\end{figure}
The tight span $T(\{0, D\})$ is the interval $[0, D]$ with its usual
metric, and embeds in $\refl(\{0, D\})$ as shown.
\end{example}

The tight span construction has been discovered independently several
times, as recounted in the introduction of~\cite{WillTSI}. That the form
given here is equivalent to other forms of the definition was established
by Dress (Section~1 of~\cite{DresTTE}).  The first to discover it was
Isbell~\cite{IsbeSTI}, who called it the `injective envelope', $T(A)$ being
the smallest injective metric space containing $A$.  But Isbell does not
seem to have noticed the connection with Isbell conjugacy.

The tight span is only defined for symmetric spaces, and is itself
symmetric (not quite trivially).  On the other hand, the reflexive
completion of a symmetric space need not be symmetric, as the
two-point example shows.  Theorem~4.1.1 of~\cite{WillTSI} states that the
tight span is the symmetric part of the reflexive completion:

\begin{thm}[Willerton]
Let $A$ be a symmetric metric space.  Then the tight span $T(A)$ is
the largest symmetric subspace of $ \refl(A)$ containing $A$.
\end{thm}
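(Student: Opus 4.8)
The plan is to prove the two inclusions separately: that $T(A)$ is itself a symmetric subspace of $\refl(A)$ containing $A$, and that it contains every such subspace, so that it is the largest. The computational heart of both halves is an explicit formula for the distance in $\refl(A)$ between an arbitrary $h \in \refl(A)$ and a representable; throughout I write $A(-,a) \in \refl(A)$ for the element $b \mapsto d(a,b)$.

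First I would record this formula: for $h \in \refl(A)$ and $a \in A$,
\[
d\bigl(A(-,a), h\bigr) = h(a), \qquad d\bigl(h, A(-,a)\bigr) = \du{h}(a).
\]
The second identity is immediate, since $d(h, A(-,a)) = \sup_b\bigl(d(a,b) \truncsub h(b)\bigr) = \du{h}(a)$ by symmetry of $A$. For the first, $d(A(-,a), h) = \sup_b\bigl(h(b) \truncsub d(a,b)\bigr)$; as $h$ is distance-decreasing and $A$ symmetric, $h(b) - h(a) \leq d(a,b)$, so $h(b) \truncsub d(a,b) \leq h(a)$ for all $b$, with equality at $b = a$, and the supremum is $h(a)$. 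These two identities dispatch the ``largest'' half at once: if $S \sub \refl(A)$ is any symmetric subspace containing $A$ and $h \in S$, then for every $a$ the pair $A(-,a), h$ lies in $S$, so $d(A(-,a), h) = d(h, A(-,a))$, which by the formulas reads $h(a) = \du{h}(a)$. As $a$ is arbitrary, $\du{h} = h$, i.e.\ $h \in T(A)$, whence $S \sub T(A)$. That $T(A)$ itself contains $A$ is the observation that each representable is self-conjugate when $A$ is symmetric: $\du{A(-,a)} = A(a,-) = A(-,a)$.

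It remains to show that $T(A)$ is symmetric, and this is where the real work lies. Concretely I must prove that for self-conjugate $f, g$ one has $\sup_a(f(a) - g(a)) = \sup_a(g(a) - f(a))$, so that $d(f,g) = d(g,f)$. It suffices to prove $\sup_a(f - g) \leq \sup_a(g - f)$ and then swap $f$ and $g$. If $\sup_a(f - g) \leq 0$ this is trivial, so suppose otherwise and fix $a$ with $f(a) - g(a) > 0$; then $f(a) > 0$. The key is an approximate tightness property coming from $f = \du{f}$: since $f(a) = \sup_b\bigl(d(a,b) \truncsub f(b)\bigr)$ is positive, for each $\epsln > 0$ there is some $b$ with $f(a) + f(b) \leq d(a,b) + \epsln$. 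On the other hand $g = \du{g}$ forces $g(a) + g(b) \geq d(a,b)$ for all $a, b$. Combining these gives $(g(b) - f(b)) \geq (f(a) - g(a)) - \epsln$, so $\sup_b(g - f) \geq (f(a) - g(a)) - \epsln$; choosing $a$ with $f(a) - g(a)$ within $\epsln$ of $\sup_a(f - g)$ and letting $\epsln \to 0$ yields $\sup_b(g - f) \geq \sup_a(f - g)$, as required.

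I expect this last step to be the main obstacle. The distance formula makes the ``largest'' half and the containment of $A$ essentially automatic, whereas here one must extract symmetry of the sup-metric from the fixed-point equation $f = \du{f}$, and some care is needed over possibly infinite values (where the same inequalities show both suprema are simultaneously infinite). Assembling the pieces, $T(A)$ is a symmetric subspace of $\refl(A)$ containing $A$ and containing every other such subspace, hence is the largest, as claimed.
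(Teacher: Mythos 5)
Your proposal is correct, but there is little in the paper to compare it against: the paper states this theorem with attribution to Willerton and explicitly defers to Theorem~4.1.1 of~\cite{WillTSI}, giving no proof of its own. Measured against Willerton's actual argument, your proof is essentially the same. Your two distance identities $d(A(-,a), h) = h(a)$ (the enriched Yoneda lemma, where your use of symmetry of $A$ to get $h(b) \truncsub d(a,b) \leq h(a)$ from the contravariant Lipschitz condition is genuinely needed) and $d(h, A(-,a)) = \du{h}(a)$ (just the definition of the conjugate) correspond to Willerton's computation that a point $(f,g)$ of the Isbell completion satisfies $d(x,(f,g)) = f(x)$ and $d((f,g),x) = g(x)$, and both proofs then conclude in one line that any symmetric subspace containing $A$ consists of self-conjugate functions. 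Likewise your identification of the real content --- that $T(A)$ is itself symmetric, which the paper flags as holding ``not quite trivially'' --- is accurate, and your argument for it (every self-conjugate $g$ satisfies $g(a) + g(b) \geq d(a,b)$, while $f = \du{f}$ with $f(a) > 0$ gives, for each $\epsln > 0$, some $b$ with $f(a) + f(b) \leq d(a,b) + \epsln$) is the classical tight-span computation going back to Isbell and Dress. Your parenthetical about infinite values is a genuine issue in the generalized-metric setting and is correctly resolved: when $f(a) = \infty$ and $g(a)$ is finite, the same pair of inequalities forces $\sup_b\bigl(g(b) - f(b)\bigr) = \infty$, so the two suprema are simultaneously infinite, and equality of the untruncated suprema implies equality of the truncated ones defining $d(f,g)$ and $d(g,f)$. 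In short: a correct, self-contained reconstruction of the cited proof.
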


Here `largest' is with respect to inclusion. A nontrivial corollary is that
$\refl(A)$ \emph{has} a largest symmetric subspace containing $A$.

Finally, reflexive completion of metric spaces has arisen in fields far from
category theory. Pursuing a project in combinatorial optimization, Hirai
and Koichi~\cite{HiKo} defined the `directed tight span' of a generalized
metric space. As Willerton showed (Theorem~4.2.1 of~\cite{WillTSI}), it is
exactly the reflexive completion.

\section{Dense and adequate functors}
\label{sec:density}

Here we gather results on dense and adequate functors that will be used
later to characterize the reflexive completion. Some can be found in
Isbell's or Ulmer's foundational papers~\cite{IsbeAS,Ulme} or in Chapter~5
of Kelly~\cite{KellBCE}, while some appear to be new. For the rest of this
work, we restrict to unenriched categories, although much of what we do can
be extended to the enriched setting.

\begin{defn}
A functor $F \from \A \to \B$ is \demph{dense} if its nerve functor $\nv{F}
\from \B \to [\A^\op, \Set]$ (Definition~\ref{defn:rep-small}) is full and
faithful, and \demph{codense} if $\conv{F} \from \B \to [\A, \Set]^\op$ is
full and faithful. 

A functor is \demph{small-dense} if dense and representably small, and
\demph{small-codense} if codense and corepresentably small.
\end{defn}

\begin{remark}
It is a curious fact (not needed here) that while $F$ is dense if and only
if $\nv{F}$ is full and faithful, $F$ is full and faithful if and only if
$\nv{F}$ is dense.
\end{remark}

\begin{example}
\label{eg:adjt-dense}
Let $F \from \A \to \B$ be a functor with a right adjoint $G$. It is very
well known that $G$ is full and faithful if and only if the counit of the
adjunction is an isomorphism. Less well known, but already pointed out by
Ulmer in~1968 (Theorem~1.13 of~\cite{Ulme}), is that these conditions are
also equivalent to $F$ being dense. Thus, any functor $F$ with a full and
faithful right adjoint is dense. Indeed, $F$ is small-dense, since $\B(F-,
b)$ is representable for each $b \in \B$.
\end{example}

A functor $F \from \A \to \B$ is small-dense when $\nv{F}$
is full, faithful, and takes values in the category $\cocmp{\A}$ of small
functors $\A^\op \to \Set$. Then $\B$ embeds fully into $\cocmp{\A}$. When
$\A$ is small, every dense functor $\A \to \B$ is small-dense.

\begin{example}
The archetypal dense functor is the Yoneda embedding $\A \incl [\A^\op,
\Set]$, and the archetypal small-dense functor is the Yoneda embedding $\A
\incl \cocmp{\A}$.
\end{example}

A standard result is that $F \from \A \to \B$ is dense if and only if
every object of $\B$ is canonically a colimit of objects of the form
$Fa$; that is, for each $b \in \B$, the canonical cocone on the diagram
\[
(F \mathbin{\downarrow} b) \toby{\text{pr}} \A \toby{F} \B
\]
with vertex $b$ is a colimit cocone (Section~X.6 of
Mac~Lane~\cite{MacLCWM}). In terms of coends, this means that
\[
b \iso \int^a \B(Fa, b) \times Fa.
\]
That the Yoneda embedding is dense gives the \demph{density formula}
\[
X \iso \int^a X(a) \times \A(-, a)
\]
for functors $X \from \A^\op \to \Set$.

\begin{example}
\label{eg:dense-poset}
When $B$ is an ordered set (or class) and $A \sub B$ with the induced
order, the inclusion $A \incl B$ is dense if and only if it is
\demph{join-dense}: every element of $B$ is a join of elements of $A$.
\end{example}

\begin{lemma}
\label{lemma:sd-colim}
Let $F \from \A \to \B$ be a small-dense functor. Then for each $b \in \B$,
there is a small diagram $(a_i)_{i \in \scat{I}}$ in $\A$ such that $b \iso
\colim_i Fa_i$.
\end{lemma}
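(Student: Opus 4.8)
The plan is to unwind the definitions: $F$ being small-dense means $\nv{F} \from \B \to \cocmp{\A}$ is full and faithful, so for each $b \in \B$ the functor $\nv{F}(b) = \B(F-, b)$ is a \emph{small} functor $\A^\op \to \V$. By the definition of smallness recalled in Section~\ref{sec:conj-gen}, a small functor is by definition a small colimit of representables. Thus I would first write
\[
\B(F-, b) \iso \colim_{i \in \scat{I}} \A(-, a_i)
\]
for some small category $\scat{I}$ and some diagram $(a_i)_{i \in \scat{I}}$ in $\A$, this colimit being taken in $\cocmp{\A}$.

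Next I would apply the full and faithful functor $\nv{F}$ in reverse. Since $\nv{F}$ is full and faithful, it reflects any colimits that exist in its image; more directly, density gives the canonical formula expressing $b$ as a colimit. The cleanest route is to use the density formula for $F$: since $F$ is dense, every object $b \in \B$ is canonically the colimit
\[
b \iso \int^a \B(Fa, b) \times Fa = \B(F-, b) \mathbin{*} F,
\]
the weighted colimit of $F$ with weight $\nv{F}(b) = \B(F-, b)$. Substituting the expression of this weight as $\colim_i \A(-, a_i)$ and using that weighted colimits are cocontinuous in the weight, the weight $\A(-, a_i)$ picks out the object $Fa_i$ (by the Yoneda-type reduction $\A(-, a) \mathbin{*} F \iso Fa$), so the whole colimit collapses to $\colim_i Fa_i$. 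Hence $b \iso \colim_i Fa_i$ with the same small indexing category $\scat{I}$.

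The one point requiring care is whether the colimit $\colim_i Fa_i$ actually \emph{exists} in $\B$: the statement asserts an isomorphism in $\B$, not merely in $\cocmp{\A}$. I expect this to be the main obstacle, but it is resolved precisely by density. Because $\nv{F}$ is full and faithful and the weighted colimit defining $b$ exists (it is $b$ itself), the diagram $(Fa_i)_i$ has its colimit computed as exactly $b$: the cocone over $(Fa_i)_i$ with vertex $b$ induced by the presentation of the weight is a colimit cocone, since applying $\nv{F}$ sends it to the colimit cocone presenting $\B(F-,b)$ in $\cocmp{\A}$, and a full and faithful functor reflects colimits. I would therefore verify this reflection step explicitly rather than the routine coend manipulations, and conclude that $b \iso \colim_i Fa_i$ in $\B$ over the small category $\scat{I}$.
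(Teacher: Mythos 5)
Your main line of argument is the paper's own: use representable smallness to present the weight $\B(F-,b)$ as a small colimit $\colim_i \A(-,a_i)$ of representables, then feed this into the density formula $b \iso \int^a \B(Fa,b) \times Fa$ and collapse via the reduction $\int^a \A(a,a_i) \times Fa \iso Fa_i$. The paper carries this out as a three-step chain of coend isomorphisms and stops there; in coend calculus such a chain asserts the existence of each side along with the isomorphism, so the existence worry you raise is already settled by that computation, properly read.

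The step you single out for explicit verification, however, is wrong as stated. You claim that applying $\nv{F}$ to the cocone $(Fa_i \to b)_i$ yields the colimit cocone presenting $\B(F-,b)$, so that full faithfulness of $\nv{F}$ lets you reflect. But $\nv{F}(Fa_i) = \B(F-,Fa_i)$, not $\A(-,a_i)$; these agree only when $F$ is fully faithful (Lemma~\ref{lemma:ff}), and the present lemma makes no such assumption --- indeed the paper stresses (Example~\ref{eg:adjt-dense}) that important small-dense functors, such as reflectors, are not full and faithful. A fully faithful functor reflects a cocone whose \emph{image} is known to be colimiting; here the image cocone $\bigl(\B(F-,Fa_i) \to \B(F-,b)\bigr)_i$ is not the presentation cocone and is not known to be colimiting, so the reflection argument does not get off the ground. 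The correct repair is direct: since $\nv{F}$ is full and faithful, for every $c \in \B$ one has
\[
\B(b,c)
\iso
\cocmp{\A}\bigl(\B(F-,b),\, \B(F-,c)\bigr)
\iso
\cocmp{\A}\Bigl(\colim_i \A(-,a_i),\, \B(F-,c)\Bigr)
\iso
\lim_i \B(Fa_i, c)
\]
naturally in $c$ (the last step by Yoneda), and this is precisely the universal property asserting that $\colim_i Fa_i$ exists in $\B$ and equals $b$. This is what the paper's chain of coends encodes, and with this substitution your proof goes through.
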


\begin{proof}
Let $b \in B$.  Since $F$ is representably small, we can choose a small
diagram $(a_i)$ in $\A$ such that $B(F-, b) \iso \colim_i \A(-, a_i)$. Then
by density of $F$ and the density formula,
\[
b 
\iso 
\int^a B(Fa, b) \times Fa
\iso
\int^{a, i} \A(a, a_i) \times Fa
\iso
\int^i Fa_i.
\]
\end{proof}

We will be especially interested in the (co)density of functors that are
full and faithful. Up to equivalence, such functors are inclusions of full
subcategories, which are called \demph{(co)dense} or \demph{small-(co)dense
subcategories} if the inclusion functor has the corresponding
property.

We now state some basic lemmas on full, faithful and dense functors,
beginning with one whose proof is immediate from the definitions.

\begin{lemma}
\label{lemma:ff}
Let $F \from \A \to \B$ be a full and faithful functor. Then the composite
\[
\A \toby{F} \B \toby{\nv{F}} [\A^\op, \Set]
\]
is canonically isomorphic to the Yoneda embedding.
\qed
\end{lemma}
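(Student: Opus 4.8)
The plan is to unwind both sides of the claimed isomorphism directly from the definitions and show they agree on objects, then observe naturality comes for free. Let $F \from \A \to \B$ be full and faithful, and fix $a \in \A$. The nerve $\nv{F}$ sends $Fa$ to the functor $\B(F-, Fa) \from \A^\op \to \Set$. The composite in question therefore assigns to $a$ the functor $b' \mapsto \B(Fb', Fa)$, whereas the Yoneda embedding $\A \incl [\A^\op, \Set]$ assigns to $a$ the representable $\A(-, a)$, i.e.\ $b' \mapsto \A(b', a)$.

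First I would note that fullness and faithfulness of $F$ say precisely that the action-on-morphisms map
\[
F_{b', a} \from \A(b', a) \to \B(Fb', Fa)
\]
is a bijection for every $b', a \in \A$. These bijections assemble, as $b'$ varies, into an isomorphism of functors $\A(-, a) \iso \B(F-, Fa) = \nv{F}(Fa)$ in $[\A^\op, \Set]$; naturality in $b'$ is just functoriality of $F$, and naturality in $a$ follows the same way. Thus the composite $\nv{F} \of F$ agrees with the Yoneda embedding not merely pointwise but as a natural transformation, and the whole statement amounts to recognising $F$'s full-and-faithfulness as exactly the data of this natural isomorphism.

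There is essentially no obstacle here: the lemma is a repackaging of the definition of full and faithful in the language of nerves, which is why the authors flag the proof as immediate. The only point requiring a modicum of care is the word \emph{canonically}, i.e.\ making sure the comparison isomorphism is the evident one (induced by $F$ itself) rather than merely asserting abstract isomorphism; but since $F_{b',a}$ is the canonical comparison, this is automatic. I would also remark that the codomain is written as $[\A^\op, \Set]$ rather than $\cocmp{\A}$, so no smallness hypothesis is used and nothing need be checked about whether the representables land in the small functors (they do, but it is irrelevant here).
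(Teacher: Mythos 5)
Your proof is correct and is exactly the argument the paper has in mind: the lemma is stated with its proof omitted as immediate from the definitions, and your unwinding of full-and-faithfulness into the natural bijections $F_{b',a} \from \A(b',a) \to \B(Fb',Fa)$, assembled into an isomorphism $\nv{F} \of F \iso H_\bullet$, is precisely that immediate argument. Nothing is missing, and your remark that no smallness condition is needed (the codomain being $[\A^\op,\Set]$) is accurate.
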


The next lemma follows from the corollary to Proposition~3.2 in
Lambek~\cite{LambCC}, but we include the short proof for completeness.

\begin{lemma}
\label{lemma:dense-cts}
Every full and faithful dense functor preserves all (not just small) limits.
\end{lemma}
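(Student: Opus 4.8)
The plan is to factor $F$ through its nerve and exploit how the Yoneda embedding and full and faithful functors each interact with limits. Write $\nv{F} \from \B \to [\A^\op, \Set]$ for the nerve. Density of $F$ makes $\nv{F}$ full and faithful, and since $F$ is itself full and faithful, Lemma~\ref{lemma:ff} supplies a canonical isomorphism $\nv{F} \of F \iso H_\bullet$, where $H_\bullet \from \A \to [\A^\op, \Set]$ is the Yoneda embedding.

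First I would isolate the two standard facts the argument rests on. The first is that $H_\bullet$ preserves all limits: if $L = \lim_i a_i$ in $\A$, then for each $c \in \A$ the representable $\A(c, -)$ preserves the limit, giving $\A(c, L) \iso \lim_i \A(c, a_i)$; since limits in $[\A^\op, \Set]$ are computed pointwise wherever the pointwise limits exist, this yields $\A(-, L) \iso \lim_i \A(-, a_i)$. The second is that a full and faithful functor reflects limits: a cone in $\B$ whose $\nv{F}$-image is limiting is itself limiting, fullness producing the comparison map and faithfulness forcing both its uniqueness and the cone conditions. Crucially, neither fact restricts the size of the indexing diagram.

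The main argument is then immediate. Given any diagram $(a_i)_{i \in \scat{I}}$ in $\A$ with a limit $L$ and limit cone $(\pi_i \from L \to a_i)$, I would apply $F$ to obtain the cone $(F\pi_i \from FL \to Fa_i)$ in $\B$, and then apply $\nv{F}$. Using $\nv{F} \of F \iso H_\bullet$, the resulting cone is isomorphic to $(\A(-, \pi_i) \from \A(-, L) \to \A(-, a_i))$, which is a limit cone by the first fact. Since $\nv{F}$ reflects limits by the second fact, $(F\pi_i)$ is already a limit cone in $\B$, so $F$ preserves the limit.

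The delicate point --- and the nearest thing to an obstacle --- is set-theoretic. For a large diagram one cannot appeal to completeness of $[\A^\op, \Set]$, because $\Set$ need not admit large limits. This is never needed, however: each pointwise limit $\lim_i \A(c, a_i)$ exists precisely because it equals $\A(c, L)$, so the limit of $(\A(-, a_i))$ exists in $[\A^\op, \Set]$ and is pointwise even when that functor category fails to be complete. This is exactly what allows the conclusion to cover all limits rather than only small ones.
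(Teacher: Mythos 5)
Your proof is correct and takes essentially the same route as the paper's: identify $\nv{F} \of F$ with the Yoneda embedding via Lemma~\ref{lemma:ff}, note that the Yoneda embedding preserves arbitrary limits while the full and faithful nerve $\nv{F}$ reflects them, and conclude. Your explicit handling of the set-theoretic point---that the pointwise limits $\lim_i \A(c, a_i)$ exist because they are computed by the representables $\A(c, L)$, even when $[\A^\op, \Set]$ fails to be complete---is precisely the observation left implicit in the paper's one-line argument.
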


\begin{proof}
We use Lemma~\ref{lemma:ff}. Since $\nv{F}$ is full and faithful, it
reflects arbitrary limits; but the Yoneda embedding preserves them, so $F$
does too.
\end{proof}

The composite of two full and faithful dense functors need not be
dense. Isbell gave one counterexample (paragraph~1.2 of~\cite{IsbeAS}) and
Kelly gave another (Section~5.2 of~\cite{KellBCE}). Nevertheless:

\begin{lemma}
\label{lemma:dense-cocts-comp}
Let $\A \toby{F} \B \toby{G} \C$ be dense functors, and suppose 
that $G$ preserves arbitrary colimits. Then $GF$ is dense.
\end{lemma}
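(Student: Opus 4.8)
The plan is to establish density of $GF$ in its colimit form: I will show that for each $c \in \C$ the canonical comparison map $\int^a \C(GFa,c)\times GFa \to c$ is an isomorphism, so that every object of $\C$ is canonically a colimit of objects of the form $GFa$. First I would record the three ingredients. Density of $G$ gives the density formula $c \iso \int^b \C(Gb,c)\times Gb$ for each $c\in\C$; density of $F$ gives $b \iso \int^a \B(Fa,b)\times Fa$ for each $b\in\B$; and applying $G$ to the latter, using that $G$ preserves arbitrary colimits (hence this coend and the copowers $\B(Fa,b)\times Fa$), yields $Gb \iso \int^a \B(Fa,b)\times GFa$. Note that nothing here requires $F$ or $G$ to be full and faithful.

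Next I would substitute the expression for $Gb$ into the formula for $c$ and simplify by coend calculus. Since $-\times S$ preserves colimits in $\Set$, I can pull $\C(Gb,c)\times(-)$ through the inner coend and then apply Fubini:
\[
c \iso \int^b \C(Gb,c)\times\Bigl(\int^a \B(Fa,b)\times GFa\Bigr) \iso \int^a\Bigl(\int^b \C(Gb,c)\times\B(Fa,b)\Bigr)\times GFa.
\]
The inner coend is a co-Yoneda reduction: regarding $b\mapsto\C(Gb,c)$ as a presheaf on $\B$, we have $\int^b \C(Gb,c)\times\B(Fa,b)\iso\C(GFa,c)$. Substituting gives $c\iso\int^a\C(GFa,c)\times GFa$, which is the density formula for $GF$.

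The step I expect to be the main obstacle is confirming that this composite isomorphism is the canonical comparison map and not merely an abstract isomorphism, since only the canonical cocone being a colimit witnesses density. Each isomorphism above is natural in $c$ and compatible with the structure maps, so tracking them through should settle this, but the bookkeeping is where care is needed. The cleanest way to make canonicity automatic is to recast the argument through pointwise left Kan extensions, where density of $F$ reads $\mathrm{Lan}_F F \iso 1_\B$: because $G$ preserves colimits it carries this pointwise extension to $\mathrm{Lan}_F(GF)\iso G\of 1_\B = G$, and transitivity of pointwise left Kan extensions then gives $\mathrm{Lan}_{GF}(GF)\iso\mathrm{Lan}_G(\mathrm{Lan}_F(GF))\iso\mathrm{Lan}_G G\iso 1_\C$, which is exactly density of $GF$ with its canonical comparison. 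Alternatively one can exhibit $c$ as the iterated colimit $\colim_{(G\mathbin{\downarrow} c)}\colim_{(F\mathbin{\downarrow} b)}GF$ and produce a final functor from the associated Grothendieck construction to $(GF\mathbin{\downarrow} c)$; verifying that cofinality is then the crux.
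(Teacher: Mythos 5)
Your proof is correct and follows essentially the same route as the paper's: the paper's argument is exactly your coend computation, chaining $c \iso \int^b \C(Gb,c)\times Gb$ (density of $G$), expanding $Gb$ via density of $F$ and colimit-preservation of $G$, and collapsing $\int^b \C(Gb,c)\times\B(Fa,b)$ by the density formula to obtain $c \iso \int^a \C(GFa,c)\times GFa$. Your canonicity concern is handled in the paper simply by observing that every isomorphism in the chain is canonical and natural in $c$, so the composite is the canonical comparison; the Kan extension recast you sketch is a fine alternative but not needed.
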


\begin{proof}
For $c \in \C$, we have canonical isomorphisms
\begin{align*}
c       &
\iso 
\int^b \C(Gb, c) \times Gb      & 
\text{($G$ is dense)}   \\
& 
\iso 
\int^b \C(Gb, c) \times G \biggl( \int^a \B(Fa, b) \times Fa\biggr)      & 
\text{($F$ is dense)} \\
& 
\iso 
\int^{a, b} \C(Gb, c) \times \B(Fa, b) \times GFa       & 
\text{($G$ preserves colimits)} \\
& 
\iso 
\int^a \C(GFa, c) \times GFa   & 
\text{(density formula),}
\end{align*}
so $GF$ is dense.
\end{proof}

\begin{defn}
\label{defn:adeq}
A functor is \demph{adequate} if it is full, faithful, dense and codense,
and \demph{small-adequate} if it is full, faithful, small-dense and
small-codense. 
\end{defn}

\begin{remark}
\label{rmk:dense-adeq}
Isbell's foundational paper~\cite{IsbeAS} considered adequacy only for full
subcategories. Up to equivalence, this amounts to working only with
functors that are full and faithful. For him, fullness and faithfulness
were implicit assumptions rather than explicit hypotheses.  He used
`left/right adequate' for what is now called dense/codense, and `adequate'
for dense and codense. The word `dense' was introduced later by
Ulmer~\cite{Ulme}, who extended the theory to arbitrary functors.
\end{remark}

\begin{example}
\label{eg:poset-adeq}
Let $f \from A \to B$ be an order-preserving map between partially ordered
classes. It is full and faithful if and only if it is an order-embedding
(that is, reflects the order relation), and by
Example~\ref{eg:dense-poset}, it is adequate if and only if it is also both
join-dense and meet-dense. Small-adequacy means that each element of $B$ is
a join of some \emph{small} family of elements of $\im f$, and similarly for
meets. 
\end{example}

\begin{lemma}
\label{lemma:adeq-comp}
The classes of adequate and small-adequate functors are each closed under
composition.
\end{lemma}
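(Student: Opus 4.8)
The plan is to reduce everything to the composition lemmas already in hand, since both classes require functors to be full and faithful, a property visibly closed under composition. The real content therefore lies in density and codensity, together with the smallness conditions in the second case. The key fact I would lean on is the dual of Lemma~\ref{lemma:dense-cts}: a full and faithful \emph{codense} functor preserves all (not just small) colimits, and dually a full and faithful dense functor preserves all limits.

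First I would handle adequacy. Let $\A \toby{F} \B \toby{G} \C$ be adequate. Because $G$ is full, faithful and codense, the dual of Lemma~\ref{lemma:dense-cts} shows that $G$ preserves arbitrary colimits; as $F$ and $G$ are both dense, Lemma~\ref{lemma:dense-cocts-comp} then yields that $GF$ is dense. Dually, $G$ being full, faithful and dense forces $G$ to preserve arbitrary limits, so the dual of Lemma~\ref{lemma:dense-cocts-comp}, applied to the codense functors $F$ and $G$, shows that $GF$ is codense. Together with full-and-faithfulness of $GF$, this proves $GF$ is adequate.

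For small-adequacy I would first observe that a small-adequate functor is in particular adequate, since small-dense implies dense and small-codense implies codense. Hence the previous paragraph already gives that $GF$ is dense and codense, and it remains only to verify the two smallness conditions. These are precisely Lemma~\ref{lemma:small-comp} and its dual: representable smallness of $F$ and $G$ gives representable smallness of $GF$, and likewise for corepresentable smallness. Thus $GF$ is small-dense and small-codense, hence small-adequate.

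The single point requiring care—and the nearest thing to an obstacle—is that Lemma~\ref{lemma:dense-cocts-comp} demands preservation of \emph{arbitrary} colimits by the outer functor, not merely small ones. I would emphasize that it is the full-and-faithful codense structure of $G$, via the dual of Lemma~\ref{lemma:dense-cts}, that supplies this, and that the corepresentable smallness of $G$ plays no part in the density argument. No new size subtleties intrude, because the smallness of $GF$ is established separately and cleanly through Lemma~\ref{lemma:small-comp}; the two halves of the argument—(co)density and smallness—never interact.
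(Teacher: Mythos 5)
Your proof is correct and is essentially the paper's own argument: the dual of Lemma~\ref{lemma:dense-cts} gives that the full and faithful codense functor $G$ preserves arbitrary colimits, Lemma~\ref{lemma:dense-cocts-comp} then gives density of $GF$, codensity follows dually, and the smallness conditions are handled separately by Lemma~\ref{lemma:small-comp} and its dual. Your closing remark correctly identifies the one delicate point (preservation of \emph{arbitrary} colimits, supplied by codensity rather than any smallness hypothesis), which the paper treats in exactly the same way.
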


Isbell proved an analogue of this result for properly adequate functors
(statement~1.6 of~\cite{IsbeAS}), using a different argument.

\begin{proof}
Let $\A \toby{F} \B \toby{G} \C$ be adequate functors. Then $G$ preserves
arbitrary colimits by the dual of Lemma~\ref{lemma:dense-cts}, so $GF$ is
dense by Lemma~\ref{lemma:dense-cocts-comp}.  Dually, $GF$ is codense. So
$GF$ is adequate. If $F$ and $G$ are small-adequate then so is $GF$, by
Lemma~\ref{lemma:small-comp}.
\end{proof}

\begin{lemma}
\label{lemma:adeq-epi}
For adequate functors
$\xymatrix@1@C-2.5mm{ \A \ar[r]^F &\B \ar@<.5ex>[r]^G \ar@<-.5ex>[r]_{G'} &\C}$,
if $GF \iso G'F$ then $G \iso G'$.
\end{lemma}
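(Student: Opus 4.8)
The plan is to exploit density of $F$ to write every object of $\B$ as a canonical colimit of objects in the image of $F$, and then to push the hypothesis $GF \iso G'F$ through that presentation using the fact that $G$ and $G'$ preserve colimits. Concretely, since $F$ is dense, the density formula presents each $b \in \B$ as
\[
b \iso \int^a \B(Fa, b) \times Fa,
\]
naturally in $b$; equivalently, $b$ is the colimit of the canonical diagram $(F \mathbin{\downarrow} b) \toby{\text{pr}} \A \toby{F} \B$.

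The key observation is that $G$ and $G'$, being adequate, are in particular full, faithful and codense, so by the dual of Lemma~\ref{lemma:dense-cts} they preserve \emph{all} colimits, not merely small ones. Applying $G$ to the presentation above therefore gives
\[
Gb \iso \int^a \B(Fa, b) \times GFa,
\]
and likewise $G'b \iso \int^a \B(Fa, b) \times G'Fa$, both natural in $b$. A natural isomorphism $GF \iso G'F$ now supplies isomorphisms $GFa \iso G'Fa$ natural in $a$; since $b$ enters these coends only through the untouched factor $\B(Fa, b)$, substituting into the integrand yields an isomorphism between the two coends that is natural in $b$. Composing the three isomorphisms produces a natural isomorphism $G \iso G'$.

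I expect the one step demanding real care to be the invocation of colimit-preservation: when $\A$ is large the diagram $(F \mathbin{\downarrow} b)$ is itself large, so it is essential that codensity forces $G$ and $G'$ to preserve colimits of arbitrary size. This is exactly the strengthened conclusion of Lemma~\ref{lemma:dense-cts} (preservation of all, not just small, limits), dualized; without it the cancellation would fail for large $\A$. The remaining bookkeeping --- checking that the displayed isomorphisms and the coend isomorphism induced by $GF \iso G'F$ all vary naturally in $b$ --- is routine, following from naturality of the density formula in $b$ and of the given isomorphism in $a$.
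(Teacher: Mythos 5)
Your proof is correct and is precisely the paper's argument dualized: the paper proves the strengthening ``$F$ codense and $G, G'$ full, faithful and dense implies $GF \iso G'F \implies G \iso G'$'' by writing $Gb \iso \int_a [\B(b, Fa), GFa]$ via codensity of $F$ and the limit-preservation of Lemma~\ref{lemma:dense-cts}, whereas you prove the dual strengthening using density of $F$, the coend presentation $b \iso \int^a \B(Fa, b) \times Fa$, and the dual of that lemma. Both versions specialize to the adequate case, and you correctly identify the one delicate point --- that preservation of colimits of arbitrary (not just small) size is what makes the argument work when $\A$ is large --- which is exactly the role Lemma~\ref{lemma:dense-cts} plays in the paper's mirror-image version.
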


\begin{proof}
We prove the stronger result that if $F$ is codense and $G$ and $G'$ are
full, faithful and dense then $GF \iso G'F \implies G \iso G'$. Indeed,
under these assumptions, Lemma~\ref{lemma:dense-cts} implies that $G$
preserves all limits, so by codensity of $F$,
\[
Gb      
\iso 
G\int_a [B(b, Fa), Fa] 
\iso
\int_a [B(b, Fa), GFa]
\]
naturally in $b \in B$ (where $[-, -]$ denotes a power). The same holds for
$G'$, so if $GF \iso G'F$ then $G \iso G'$.
\end{proof}

For full subcategories $\A \sub \B \sub \C$, if $\A$ is
dense in $\C$ then both intermediate inclusions are dense. More generally: 

\begin{lemma}
\label{lemma:dense-factors}
Let $\A \toby{F} \B \toby{G} \C$ be functors, with $G$ full and faithful.
\begin{enumerate}
\item
\label{part:df-d}
If $GF$ is dense then so are $F$ and $G$.

\item
\label{part:df-sd}
If $GF$ is small-dense then so is $F$.
\end{enumerate}
\end{lemma}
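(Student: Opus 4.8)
The plan is to prove the two conclusions of part~\ref{part:df-d} separately, since density of $F$ is essentially formal while density of $G$ carries the real content, and then to obtain part~\ref{part:df-sd} by supplementing the density argument with a routine size check. The starting observation for $F$ is that, because $G$ is full and faithful, $\C(GFa, Gb) \iso \B(Fa, b)$ naturally in $a$ and $b$; this says exactly that $\nv{GF} \of G \iso \nv{F}$ as functors $\B \to [\A^\op, \Set]$. Since $\nv{GF}$ is full and faithful (that is what density of $GF$ means) and $G$ is full and faithful by hypothesis, their composite is full and faithful, hence so is $\nv{F}$. Thus $F$ is dense. First I would write out this natural isomorphism explicitly and then invoke closure of full-and-faithful functors under composition.

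For density of $G$ I must show that $\nv{G} \from \C \to [\B^\op, \Set]$ is full and faithful. Faithfulness is immediate: restriction along $F$ gives $\nv{GF} = F^* \of \nv{G}$, and $\nv{GF}$ is faithful, so $\nv{G}$ is faithful too. The hard part will be fullness. Given a natural transformation $\theta \from \nv{G}(c) \to \nv{G}(c')$, with components $\theta_b \from \C(Gb, c) \to \C(Gb, c')$, I would restrict along $F$ to get $F^*\theta \from \nv{GF}(c) \to \nv{GF}(c')$ and use fullness of $\nv{GF}$ to produce a map $\alpha \from c \to c'$ with $\nv{GF}(\alpha) = F^*\theta$. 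This forces $\theta_{Fa}(\phi) = \alpha \of \phi$ for every $\phi \from GFa \to c$; in other words $\theta$ agrees with $\nv{G}(\alpha)$ on the image of $F$. What remains is to upgrade this to agreement $\theta_b = \alpha \of (-)$ at an arbitrary $b \in \B$, and this extension is the crux of the whole lemma.

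The device for the last step is to apply density of $GF$ not to $c$ but to the object $Gb \in \C$. This exhibits $Gb \iso \int^a \C(GFa, Gb) \times GFa \iso \int^a \B(Fa, b) \times GFa$, the second isomorphism again using full-faithfulness of $G$; concretely, $Gb$ is the colimit of the diagram $a \mapsto GFa$ under the cocone whose component indexed by $\kappa \from Fa \to b$ is $G\kappa \from GFa \to Gb$. Now fix $\psi \from Gb \to c$. Naturality of $\theta$ along $\kappa$ reads $\theta_{Fa}(\psi \of G\kappa) = \theta_b(\psi) \of G\kappa$, and since $\psi \of G\kappa \from GFa \to c$ lands in the image of $F$ we may rewrite the left-hand side as $\alpha \of \psi \of G\kappa$. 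Hence $\theta_b(\psi) \of G\kappa = (\alpha \of \psi) \of G\kappa$ for every $\kappa$. As the maps $G\kappa$ are precisely the colimit cocone exhibiting $Gb$, the universal property forces $\theta_b(\psi) = \alpha \of \psi$, so $\theta = \nv{G}(\alpha)$ and $\nv{G}$ is full. I expect the only delicate bookkeeping to be getting the variance of this naturality square right; everything else is formal.

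Finally, part~\ref{part:df-sd} follows quickly. Density of $F$ is already established, so it suffices to prove that $F$ is representably small, i.e.\ that $\nv{F}(b) = \B(F-, b)$ is small for each $b \in \B$. But $\B(F-, b) \iso \C(GF-, Gb) = \nv{GF}(Gb)$ by full-faithfulness of $G$, and this is small because $GF$ is representably small. Hence $F$ is small-dense. I would note in passing that no analogous conclusion is available for $G$: representable smallness of $G$ would require $\C(G-, c)$ to be small as a functor on all of $\B$, which the smallness of its restriction $\C(GF-, c)$ does not supply, and this is why the small-dense conclusion is asserted only for $F$.
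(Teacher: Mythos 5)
Your proposal is correct. The treatment of density of $F$ and of part~(ii) coincides with the paper's: the paper likewise observes that $\nv{F} \iso \nv{GF} \of G$ is a composite of full and faithful functors, and proves representable smallness of $F$ via the same isomorphism $\B(F-,b) \iso \C(GF-,Gb)$. Where you genuinely diverge is the density of $G$, which is indeed the substantive half. The paper argues by coend calculus: it applies density of $GF$ at $c$, expands $\C(GFa,c)$ by the density formula, uses full-faithfulness of $G$ to rewrite $\B(Fa,b)$ as $\C(GFa,Gb)$, and then applies density of $GF$ a second time at $Gb$ to collapse the inner coend, yielding $c \iso \int^b \C(Gb,c) \times Gb$ naturally in $c$. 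You instead verify fullness and faithfulness of $\nv{G}$ pointwise: fullness of $\nv{GF}$ produces the candidate map $\alpha$, and the canonical colimit presentation of $Gb$ over $(GF \downarrow Gb)$ — which is exactly the paper's second application of density of $GF$, at $Gb$ — lets you propagate the identity $\theta = \nv{G}(\alpha)$ from the image of $F$ to all of $\B$ via naturality and joint epimorphy of the colimit cocone. So the two proofs use the same ingredients (density of $GF$ at both $c$ and $Gb$, full-faithfulness of $G$) but different formal machinery: yours is a more elementary elementwise chase that makes visible exactly where each leg of the cocone is used, while the paper's coend manipulation is shorter and transfers verbatim to the enriched setting, which matters given the authors' stated aim of keeping proofs enrichment-transparent. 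One small point you elide: to conclude $\theta_b(\psi) = \alpha \of \psi$ from agreement after precomposition with every $G\kappa$, you should say that both maps induce the same cocone out of the colimit $Gb$, so they coincide by the uniqueness clause of the universal property; this is the standard fact that colimit cocones are jointly epimorphic, and is a one-line patch rather than a gap.
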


That $G$ is dense was asserted without proof in statement~1.1 of
Isbell~\cite{IsbeAS}. 

\begin{proof}
For~\bref{part:df-d}, to prove that $F$ is dense, note that the composite
functor 
\[
\begin{array}{ccccc}
\B      &\toby{G}       &\C     &\toby{\nv{GF}} &
[\A^\op, \Set]  \\
b       &\longmapsto    &Gb     &\longmapsto    &
\C(GF-, Gb) \iso \B(F-, b)
\end{array}
\]
is isomorphic to $\nv{F}$.  But both $G$ and $\nv{GF}$ are full and faithful, so
$\nv{F}$ is too.

To prove that $G$ is dense, let $c \in \C$. Then
\begin{align*}
c       &
\iso
\int^a \C(GFa, c) \times GFa    &
\text{($GF$ is dense)}  \\
&
\iso
\int^{a, b} \C(Gb, c) \times \B(Fa, b) \times GFa       &
\text{(density formula)}        \\
&
\iso
\int^b \C(Gb, c) \times \int^a \C(GFa, Gb) \times GFa   &
\text{($G$ is full and faithful)}       \\
&
\iso
\int^b \C(Gb, c) \times Gb      &
\text{($GF$ is dense)}
\end{align*}
naturally in $c$, as required.

For~\bref{part:df-sd}, suppose that $GF$ is small-dense. We must prove that
$F$ is representably small. Let $b \in \B$. Since $G$ is full and faithful,
$\B(F-, b) \iso \C(GF-, Gb)$, which is small since $GF$ is small-dense.
\end{proof}

\begin{propn}
\label{propn:incl-adeq}
For every category $\A$, the Yoneda embedding $J_{\A} \from \A \to
\refl(\A)$ is small-adequate. 
\end{propn}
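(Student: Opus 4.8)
The plan is to verify the four conditions defining small-adequacy: that $J_\A$ is full, faithful, small-dense and small-codense. Full-faithfulness is immediate from diagram~\bref{eq:inclusions}: the composite of $J_\A$ with the full subcategory inclusion $\refl(\A) \incl \cocmp{\A}$ is the Yoneda embedding of $\A$, which is full and faithful, and since a full subcategory inclusion is full and faithful, so is $J_\A$. (Concretely, $J_\A$ is the Yoneda embedding with codomain corestricted to the full subcategory $\refl(\A)$, which contains the representables by Example~\bref{eg:rep-refl}.)

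The heart of the argument is to identify the nerve $\nv{J_\A} \from \refl(\A) \to [\A^\op, \Set]$. For $R \in \refl(\A)$, viewed as a small functor $\A^\op \to \Set$, and for $a \in \A$,
\[
\nv{J_\A}(R)(a) = \refl(\A)(J_\A a, R) = \cocmp{\A}(\A(-, a), R) \iso R(a),
\]
naturally in $a$ and $R$, using fullness of $\refl(\A)$ in $\cocmp{\A}$ and then the Yoneda lemma. Hence $\nv{J_\A}$ is naturally isomorphic to the full subcategory inclusion $\refl(\A) \incl [\A^\op, \Set]$; being full and faithful, it witnesses that $J_\A$ is dense. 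Furthermore $\nv{J_\A}(R) \iso R$ is small for every $R$, precisely because $R$ lies in $\cocmp{\A}$ by the very definition of $\refl(\A)$; thus $J_\A$ is representably small, and therefore small-dense. No separate size argument is required, since the relevant hom-objects of $\cocmp{\A}$ are genuine sets (their first argument being small).

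Small-codensity will follow by the mirror-image argument, exploiting that diagram~\bref{eq:inclusions} embeds $\refl(\A)$ fully into $\cmp{\A}$ as well. Computing the co-nerve $\conv{J_\A}(R)(a) = \refl(\A)(R, J_\A a)$ with $R$ now regarded through $\cmp{\A}$, the same Yoneda calculation identifies $\conv{J_\A}$ with the full subcategory inclusion $\refl(\A) \incl [\A, \Set]^\op$ and shows that its values are small; alternatively, one may apply the small-dense result just proved to $\A^\op$ and transport along the equivalence $\refl(\A^\op) \eqv \refl(\A)^\op$ of Remark~\bref{rmk:refl-dual}, using that a functor is small-codense exactly when its opposite is small-dense. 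Either way, I expect the only point needing care to be the bookkeeping of variance: one must check that under the identification of $\refl(\A)$ with reflexive functors $\A \to \Set$, the representable $J_\A a$ corresponds to $\A(a, -)$ and $R$ to its conjugate $\du{R}$, so that the Yoneda step again returns a small functor. Combining the two halves gives that $J_\A$ is full, faithful, small-dense and small-codense, hence small-adequate.
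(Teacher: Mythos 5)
Your proof is correct, and in substance it coincides with the paper's: both arguments rest on the factorization of the Yoneda embedding $\A \incl \cocmp{\A}$ as $\A \toby{J_\A} \refl(\A) \incl \cocmp{\A}$ given by diagram~\eqref{eq:inclusions}. The difference is one of packaging. Where you compute the nerve $\nv{J_\A}$ directly by Yoneda and observe that it is isomorphic to the full inclusion $\refl(\A) \incl [\A^\op, \Set]$, with values small because every reflexive functor lies in $\cocmp{\A}$ by definition, the paper instead cites Lemma~\ref{lemma:dense-factors}\bref{part:df-sd} applied to this factorization; your computation is exactly that lemma's proof specialized to the case at hand, so you have inlined a general lemma rather than invoked it. For the codense half the paper simply says ``by duality'', which is your second route via Remark~\ref{rmk:refl-dual}; your first route, the explicit co-nerve computation, is also sound, and you correctly flag the one point needing care: under the identification of $\refl(\A)$ with reflexive covariant functors, $\conv{J_\A}(R) \iso \du{R}$, and the smallness of $\du{R}$ is built into the definition of reflexivity (this is where the hypothesis $\du{R} \in \cmp{\A}$ does real work for large $\A$). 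What each version buys: yours is self-contained and makes visible exactly where smallness enters; the paper's is shorter and reuses Lemma~\ref{lemma:dense-factors}, which it needs elsewhere (for instance in Proposition~\ref{propn:adeq-factors} and Lemma~\ref{lemma:incl-lims}) in situations where no direct computation is available.
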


\begin{proof}
We refer to diagram~\eqref{eq:inclusions}
(p.~\pageref{eq:inclusions}). Certainly $J_\A$ is full and
faithful. Lemma~\ref{lemma:dense-factors}\bref{part:df-sd} applied to $\A
\toby{J_\A} \refl(\A) \incl \cocmp{\A}$ implies that $J_\A$ is
small-dense. By duality, it is also small-codense.
\end{proof}

Lemma~\ref{lemma:dense-factors} and its dual immediately imply:

\begin{propn}
\label{propn:adeq-factors}
Let $\A \toby{F} \B \toby{G} \C$ be functors, with $G$ full and faithful.
If $GF$ is small-adequate then $G$ is adequate and $F$ is small-adequate. 
\qed
\end{propn}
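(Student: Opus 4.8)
The plan is to extract the four defining properties of small-adequacy from the hypothesis on $GF$ and feed them into Lemma~\ref{lemma:dense-factors} together with its evident dual, treating the density and codensity clauses by direct appeal to the lemma and handling fullness and faithfulness separately by a cancellation argument. Since $GF$ is small-adequate, it is in particular full, faithful, dense, codense, small-dense and small-codense, and $G$ is full and faithful by hypothesis. The dual of Lemma~\ref{lemma:dense-factors} is obtained by applying the lemma to the factorization $\A^\op \toby{F^\op} \B^\op \toby{G^\op} \C^\op$, under which "dense" becomes "codense" and "small-dense" becomes "small-codense", while fullness and faithfulness of $G^\op$ follow from those of $G$.

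First I would dispatch $G$. It is full and faithful by assumption. Applying Lemma~\ref{lemma:dense-factors}\bref{part:df-d} with $GF$ dense gives that $G$ is dense, and applying the dual with $GF$ codense gives that $G$ is codense; hence $G$ is adequate. For $F$, the density clauses are equally direct: Lemma~\ref{lemma:dense-factors}\bref{part:df-sd} gives that $F$ is small-dense, and its dual gives that $F$ is small-codense. What remains is that $F$ is full and faithful, and here I would argue by hand, since Lemma~\ref{lemma:dense-factors} does not supply this. For faithfulness, $F(u) = F(v)$ implies $GF(u) = GF(v)$, so faithfulness of $GF$ forces $u = v$. For fullness, given $h \from Fa \to Fa'$, fullness of $GF$ produces $u \from a \to a'$ with $GF(u) = G(h)$, and then faithfulness of $G$ (available since $G$ is full and faithful) gives $F(u) = h$. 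Thus $F$ is full, faithful, small-dense and small-codense, that is, small-adequate.

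There is essentially no obstacle: every ingredient is either a hypothesis or an application of Lemma~\ref{lemma:dense-factors} and its dual. The only point deserving its own line is the cancellation argument establishing that $F$ is full and faithful, which the lemma itself does not record.
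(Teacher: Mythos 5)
Your proof is correct and matches the paper's: the paper derives this proposition directly from Lemma~\ref{lemma:dense-factors} and its dual, leaving the remaining details (in particular the standard cancellation argument that $F$ is full and faithful when $GF$ and $G$ are) as immediate, which you have simply spelled out explicitly.
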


Propositions~\ref{propn:incl-adeq} and~\ref{propn:adeq-factors} have the
following corollary. It is implicit in Section~1 of Isbell~\cite{IsbeAS},
modulo the difference in size conditions (Remark~\ref{rmk:proper}).

\begin{cor}[Isbell]
\label{cor:intermediate-adeq}
Let $\A$ be a category. For any full subcategory $\B$ of $\refl(\A)$
containing the representables, the inclusion $\A \incl \B$ is
small-adequate. 
\qed
\end{cor}

Proposition~\ref{propn:adeq-factors} does not conclude that $G$ must be
\emph{small}-adequate. Indeed, it need not be. This apparently
technical point becomes important later, so we give both a counterexample
and a sufficient condition for $G$ to be small-adequate.

\begin{example}
\label{eg:ordinals}
We exhibit ordered classes $A \sub B \sub C$ such that $A \incl
C$ is small-adequate (hence $A \incl B$ is too, by
Proposition~\ref{propn:adeq-factors}) but $B \incl C$ is not.

Let $P$ be the $5$-element poset shown in Figure~\ref{fig:ordinals}, and
let $Q$ be the ordered class of ordinals with a greatest element $\infty$
adjoined. Put
\[
C = P \times Q,
\quad
B = C \without \{(3, \infty)\},
\quad
A = C \without (\{3\} \times Q),
\]
with the product order on $C$ and the induced orders on $A, B \sub C$.  We
will prove that $A \incl C$ is small-adequate but $B \incl C$ is not
representably small, and, therefore, not small-adequate.

\begin{figure}
\setlength{\unitlength}{1mm}
\setlength{\fboxsep}{0mm}
\begin{picture}(120,50)
\cell{5}{25}{l}{\includegraphics[height=20\unitlength]{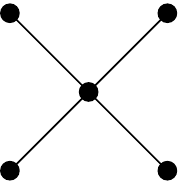}}
\cell{52}{25}{l}{\includegraphics[height=25\unitlength]{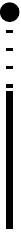}}
\cell{115}{25}{r}{\includegraphics[height=50\unitlength]{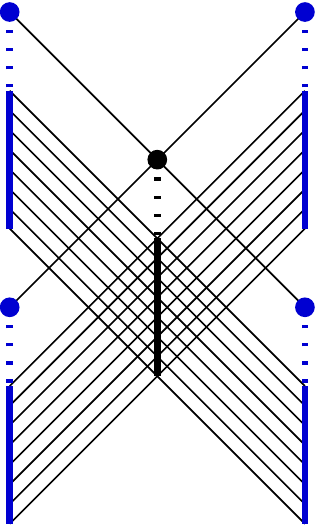}}
\cell{4}{16}{r}{$1$}
\cell{4}{34}{r}{$4$}
\cell{15}{28}{c}{$3$}
\cell{26}{16}{l}{$2$}
\cell{26}{34}{l}{$5$}
\cell{55}{36}{l}{$\infty$}
\cell{100}{38}{c}{\colorbox{white}{$(3, \infty)$}}
\cell{15}{8}{b}{$P$}
\cell{53}{8}{b}{$Q$}
\cell{100}{-1}{b}{$C = P \times Q$}
\end{picture}
\caption{The ordered classes of Example~\ref{eg:ordinals}, with $A \sub C$
shown in blue.}
\label{fig:ordinals}
\end{figure}

First we show that $A \incl C$ is dense, that is, $A$ is join-dense in $C$
(Example~\ref{eg:dense-poset}). Let $c \in C$. If $c \in A$ then $c$ is
trivially a join of elements of $A$. Otherwise, $c = (3, q)$ for some $q
\in Q$, and then $c = (1, q) \jn (2, q)$ with $(1, q), (2, q) \in A$.

Second, $A \incl C$ is codense by the same argument, because it
used nothing about the ordered class $Q$ and because $P \iso P^\op$.

Third, we show that $A \incl C$ is representably small; that is, for each
$c \in C$, the subterminal functor $A^\op \to \Set$ with support $A \cap
\dn c$ is small. Let $c \in C$. By Example~\ref{eg:small-order}, we must
find a small $K \sub A \cap \dn c$ such that for all $a \in A$, the poset
$K \cap \up a$ is connected.

If $c \in A$, we can take $K = \{a\}$. Otherwise, $c = (3, q)$ for some $q
\in Q$. Put $K = \{(1, q), (2, q)\}$. Given $a \in A \cap \dn c$, we may
suppose without loss of generality that $a = (1, q')$ for some $q' \leq q$,
and then $K \cap \up a$ is the connected poset $\{(1, q)\}$.

Fourth, $A \incl C$ is corepresentably small by the same argument, again
because it used nothing about $Q$ and because $P \iso P^\op$.

Finally, we show that $B \incl C$ is not representably small. In fact, we
show that the subterminal functor $B^\op \to \Set$ with support $B \cap \dn
(3, \infty)$ is not small. Suppose for a contradiction that it is
small. Then we can choose a small class
$
K 
\sub 
B \cap \dn (3, \infty) 
$
such that for all $b \in B$, the poset $K \cap \up b$ is connected. In
particular, every element of $B$ is less than or equal to some element of
$K$. Now for each ordinal $q$ we have $(3, q) \in B$, so $(3, q) \leq k$
for some $k \in K$, and then $k = (3, q')$ for some ordinal $q'$ with $q
\leq q'$. So if we put $Q' = \{ \text{ordinals } q' \such (3, q') \in K\}$
then $Q'$ is small (since $K$ is) and every ordinal is less than or equal to
some element of $Q'$. But there is no set of ordinals with this
property, a contradiction.
\end{example}

The following companion to Proposition~\ref{propn:adeq-factors} uses the
notion of gentle category from Remark~\ref{rmk:gentle}.

\begin{lemma}
\label{lemma:gentle-comp}
Let $\A \toby{F} \B \toby{G} \C$ be functors, with $G$ full and
faithful. Suppose that $\B$ is gentle. If $GF$ is small-adequate then so is
$G$. 
\end{lemma}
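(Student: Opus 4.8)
The plan is to build on Proposition~\ref{propn:adeq-factors}, which already gives that $G$ is adequate---full, faithful, dense and codense. Since small-adequacy is adequacy together with representable and corepresentable smallness (Definition~\ref{defn:adeq}), the only thing left to prove is that $G$ is both representably and corepresentably small. These two halves are dual: $\B^\op$ is again gentle (because $\cocmp{\B^\op} = (\cmp{\B})^\op$, so $(-)^\op$ interchanges the two defining conditions), $G^\op$ is full and faithful, and $(GF)^\op$ is small-adequate. So I will prove just one half, corepresentable smallness, and deduce the other by applying the same argument to $\A^\op \toby{F^\op} \B^\op \toby{G^\op} \C^\op$. Concretely, I will show that for each $c \in \C$ the functor $\conv{G}(c) = \C(c, G-) \from \B \to \Set$ is small.

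The central device is to realize $\conv{G}(c)$ as the conjugate, on the gentle category $\B$, of a functor that is visibly small. Put $M = \nv{GF}(c) = \C(GF-, c)$; this lies in $\cocmp{\A}$ because $GF$ is representably small. Form the left Kan extension $\mathrm{Lan}_{F^\op} M \from \B^\op \to \Set$. As $\mathrm{Lan}_{F^\op}$ preserves colimits and sends each representable $\A(-, a)$ to the representable $\B(-, Fa)$, and $M$ is a small colimit of representables, $\mathrm{Lan}_{F^\op} M$ is too; so it lies in $\cocmp{\B}$, and crucially this step uses no gentleness. Its conjugate is then computed, via the adjunction between $\mathrm{Lan}_{F^\op}$ and restriction along $F^\op$ (which carries $\B(-, b)$ to $\B(F-, b)$), as
\[
\du{(\mathrm{Lan}_{F^\op} M)}(b) \iso \cocmp{\A}\bigl(M, \B(F-, b)\bigr).
\]
To identify the right-hand side with $\C(c, Gb)$, note that since $GF$ is dense and representably small, its nerve $\nv{GF} \from \C \to \cocmp{\A}$ is full and faithful, and since $G$ is full and faithful, $\nv{GF}(Gb) = \C(GF-, Gb) \iso \B(F-, b)$. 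Hence
\[
\cocmp{\A}\bigl(M, \B(F-, b)\bigr) \iso \cocmp{\A}\bigl(\nv{GF}(c), \nv{GF}(Gb)\bigr) \iso \C(c, Gb),
\]
naturally in $b$, so that $\conv{G}(c) \iso \du{(\mathrm{Lan}_{F^\op} M)}$.

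Gentleness now finishes the argument: $\mathrm{Lan}_{F^\op} M$ is small and $\B$ is gentle, so its conjugate is again small (Remark~\ref{rmk:gentle}); therefore $\conv{G}(c)$ is small and $G$ is corepresentably small. The dual argument gives representable smallness, and together with adequacy this shows $G$ is small-adequate.

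The crux---and the only place gentleness enters---is the realization of $\conv{G}(c)$ as the conjugate of a manifestly small functor. I expect this to be the main obstacle precisely because of an apparent circularity: one checks that $\nv{G}(c)$ and $\conv{G}(c)$ are each other's conjugates on $\B$, so the principle ``conjugate of small is small'' cannot by itself bootstrap either of them into being small. The left Kan extension $\mathrm{Lan}_{F^\op} M$, which is small for formal reasons independent of gentleness, is exactly the external input that breaks this circle; Example~\ref{eg:ordinals} shows that without gentleness the conclusion genuinely fails, so this use of the hypothesis is not removable.
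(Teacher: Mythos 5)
Your proof is correct, and it reaches the paper's conclusion by a genuinely different route. The paper argues at the object level: since $GF$ is small-codense, the dual of Lemma~\ref{lemma:sd-colim} writes each $c \in \C$ as a small limit $\lim_i GFa_i$, whence $\C(G-, c) \iso \lim_i \B(-, Fa_i)$ is a small limit of representables in $[\B^\op, \Set]$; gentleness then enters through completeness of $\cocmp{\B}$ together with pointwiseness of its limits, so that $\cocmp{\B}$ is closed under small limits in $[\B^\op, \Set]$. You instead stay at the presheaf level: you transport the small functor $\nv{GF}(c)$ along $\mathrm{Lan}_{F^\op}$ (smallness is preserved for free --- a fact the paper itself invokes in the proof of Proposition~\ref{propn:cc-basics}), identify $\conv{G}(c)$ as its conjugate via the Kan adjunction and full-faithfulness of the nerve $\nv{GF}$, and let gentleness enter only through the packaged Day--Lack result quoted in Remark~\ref{rmk:gentle}, that conjugation preserves smallness over a gentle base. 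At the computational core the two arguments coincide: writing $\nv{GF}(c) \iso \colim_i \A(-, a_i)$ gives $\mathrm{Lan}_{F^\op}\nv{GF}(c) \iso \colim_i \B(-, Fa_i)$, whose conjugate $\lim_i \B(Fa_i, -)$ is exactly the small limit of corepresentables that the (dualized) paper argument produces from $c \iso \colim_i GFa_i$; your proof and the paper's use the two halves of $GF$'s small-adequacy in the same dual pairing. What your version buys is conceptual clarity: exhibiting $\conv{G}(c)$ as an honest conjugate of a visibly small functor makes transparent why gentleness --- conjugacy being a genuine adjunction between $\cocmp{\B}$ and $\cmp{\B}$ --- is the natural hypothesis, and your closing observation that $\nv{G}(c)$ and $\conv{G}(c)$ cannot bootstrap each other into smallness correctly diagnoses why an external input such as $\mathrm{Lan}_{F^\op}$ is needed (Example~\ref{eg:ordinals} confirming the hypothesis is not removable). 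What the paper's version buys is economy: it needs no Kan extensions and no appeal to the Day--Lack adjunction, only Lemma~\ref{lemma:sd-colim} and pointwise limits, and it makes visible exactly which half of gentleness (completeness of $\cocmp{\B}$) does the work in the chosen direction.
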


\begin{proof}
If $GF$ is adequate then $G$ is adequate by
Proposition~\ref{propn:adeq-factors}, so it only remains to prove that $G$
is representably and corepresentably small. By duality, it is enough to
show that $G$ is representably small.

Let $c \in C$. Since $GF$ is small-codense, the dual of
Lemma~\ref{lemma:sd-colim} implies that $c = \lim_i GFa_i$ for some small
diagram $(a_i)$ in $\A$. Then
\[
\C(G-, c)
\iso
\lim_i \C(G-, GFa_i)
\iso
\lim_i \B(-, Fa_i)
\]
as $G$ is full and faithful. Hence $\C(G-, c)$ is a small limit in
$[\B^\op, \Set]$ of representables. 

Since $\B$ is gentle, the subcategory $\cocmp{\B}$ of $[\B^\op, \Set]$ is
complete. Limits in $\cocmp{\B}$ are computed pointwise because it
contains the representables (as noted by Day and Lack in Section~3
of~\cite{DaLa}). Hence $\cocmp{\B}$ is closed under small limits in
$[\B^\op, \Set]$, and in particular, $\C(G-, c)$ is small.
\end{proof}

\section{Characterization of the reflexive completion}
\label{sec:char}

Here we prove a theorem characterizing the reflexive completion of a
category uniquely up to equivalence. It is a refinement and variant of
Theorem~1.8 of Isbell~\cite{IsbeAS}. Roughly put, the result is that the
reflexive completion of a category $\A$ is the largest category into which
$\A$ embeds as a small-adequate subcategory. This is formally similar to
the fact that the completion of a metric space $A$ is the largest metric
space in which $A$ is dense.

\begin{lemma}
\label{lemma:dense-conj}
Let $F \from \A \to \B$ be a full and faithful small-dense
functor.  Then $\du{\B(F-, b)} \iso \B(b, F-)$ naturally in $b \in \B$;
that is, the diagram
\[
\xymatrix@R=2ex@C=3em{
&
\cocmp{\A}
\ar[dd]^{\vee}   \\
\B 
\ar[ru]^-{\nv{F}}
\ar[rd]_-{\conv{F}}
&
\\
&
[\A, \Set]^\op
}
\]
commutes up to a canonical natural isomorphism.
\end{lemma}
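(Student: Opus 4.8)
The plan is to compute the conjugate $\du{\nv{F}(b)}$ straight from its definition and recognize the answer as $\conv{F}(b)$, using only the two faithfulness facts already at hand: full-faithfulness of $F$ itself (via Lemma~\ref{lemma:ff}) and full-faithfulness of the nerve $\nv{F}$, which is exactly what density of $F$ means. Smallness of $F$ enters only to guarantee that the conjugate is defined in the first place: since $F$ is representably small, $\nv{F}(b) = \B(F-, b)$ is a small functor $\A^\op \to \V$, so $\du{\nv{F}(b)}$ exists and has value
\[
\du{\nv{F}(b)}(a) = \cocmp{\A}\bigl(\nv{F}(b), \A(-, a)\bigr)
\]
at each $a \in \A$.

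The key move is to rewrite the representable $\A(-, a)$ as a nerve. By Lemma~\ref{lemma:ff}, full-faithfulness of $F$ supplies a canonical isomorphism $\A(-, a) \iso \B(F-, Fa) = \nv{F}(Fa)$, natural in $a$. Substituting this gives
\[
\du{\nv{F}(b)}(a) \iso \cocmp{\A}\bigl(\nv{F}(b), \nv{F}(Fa)\bigr).
\]
Now density of $F$ says precisely that $\nv{F}$ is full and faithful, so the canonical comparison $\B(b, Fa) \to \cocmp{\A}\bigl(\nv{F}(b), \nv{F}(Fa)\bigr)$ is an isomorphism. Composing the two, I obtain
\[
\du{\nv{F}(b)}(a) \iso \B(b, Fa) = \conv{F}(b)(a),
\]
which is the asserted formula $\du{\B(F-, b)} \iso \B(b, F-)$. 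Conceptually this is the same statement as the colimit computation one could run instead: writing $\B(F-, b) \iso \colim_i \A(-, a_i)$ by representable smallness and using that conjugacy sends this colimit to the limit $\lim_i \A(a_i, -) \iso \B(b, F-)$ via Lemma~\ref{lemma:sd-colim}; but the Yoneda route above is manifestly choice-free, so I would take it as the main argument.

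Since the computation itself is just two applications of ``a (co)representable hom is computed by a hom,'' the only point needing genuine care is naturality and the word \emph{canonical}. Each isomorphism in the chain is itself canonical and natural --- the first in $a$, the second in both arguments of $\nv{F}$ and hence in $b$ and $a$ --- so the composite is a natural isomorphism of functors $\A \to \V$, natural in $b$, which is exactly the commutativity of the stated triangle up to canonical isomorphism. The one check I would actually carry out, rather than merely cite, is that this composite agrees with the comparison map arising from the conjugacy pairing $\epsln$, so that ``canonical'' is honestly earned; this is a direct trace through the definitions of $\epsln$ and the unit, and is the mild obstacle in an otherwise immediate proof.
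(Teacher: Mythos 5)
Your proposal is correct and follows essentially the same route as the paper: the paper's proof is exactly the three-step chain $\du{\B(F-,b)}(a) \iso \cocmp{\A}(\B(F-,b),\A(-,a)) \iso \cocmp{\A}(\B(F-,b),\B(F-,Fa)) \iso \B(b,Fa)$, using full-faithfulness of $F$ to rewrite the representable as a nerve and density (full-faithfulness of $\nv{F}$) for the last step, with smallness serving only to make the conjugate defined. Your additional remarks (the colimit alternative, the compatibility check with $\epsln$) are sound but beyond what the paper records.
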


The smallness hypothesis guarantees that $\du{\B(F-, b)}$ is defined.

\begin{proof}
By the hypotheses on $F$,
\begin{align*}
\du{\B(F-, b)}(a)       &
\iso
\cocmp{\A}(\B(F-, b), \A(-, a))  \\
&
\iso
\cocmp{\A}(\B(F-, b), \B(F-, Fa))        \\
&
\iso
\B(b, Fa)
\end{align*}
naturally in $a \in \A$ and $b \in \B$.
\end{proof}

For a representably small functor $F \from \A \to \B$, the nerve functor
$\nv{F}$ has image in $\cocmp{\A}$. When does it have image in $\refl(\A)$?
The next result provides an answer (given without proof as statement~1.5
of~\cite{IsbeAS}).

\begin{propn}[Isbell]
\label{propn:codense-refl}
Let $F \from \A \to \B$ be a full and faithful small-dense functor. Then
$\B(F-, b)$ is reflexive for each $b \in \B$ if and only if $F$ is
small-adequate. 
\end{propn}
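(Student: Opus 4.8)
The plan is to reduce both implications to the single assertion that, under the standing hypotheses on $F$, the family $\{\B(F-,b)\}_{b \in \B}$ consists of reflexive functors if and only if $F$ is codense. First I record what both directions share. Write $X_b = \nv F(b) = \B(F-, b)$; since $F$ is representably small, $X_b \in \cocmp\A$, and Lemma~\ref{lemma:dense-conj} gives $\du{X_b} \iso \B(b, F-) = \conv F(b)$ naturally in $b$. Hence $\du{X_b} \in \cmp\A$ for every $b$ precisely when $\conv F(b)$ is small for every $b$, that is, precisely when $F$ is corepresentably small. This condition is built into the definition of reflexivity (we require $\du{X_b} \in \cmp\A$) and into small-adequacy, so under either hypothesis $F$ is corepresentably small, $\ddu{X_b}$ is defined, and it remains only to match the condition ``$\eta_{X_b}$ is an isomorphism for all $b$'' with codensity of $F$.

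For the ``if'' direction, assume $F$ is small-adequate, so in particular codense. By the concrete description of $\eta$ recorded after Lemma~\ref{lemma:large-adj}, $X_b$ is reflexive exactly when, for each $a$, every element of $\ddu{X_b}(a)$ is evaluation at a unique element of $X_b(a)$. Now $\ddu{X_b}(a) \iso \VNat(\B(b, F-), \A(a, -))$ by the isomorphism $\du{X_b} \iso \B(b, F-)$ of Lemma~\ref{lemma:dense-conj}, and since $F$ is full and faithful, $\A(a, -) \iso \B(Fa, F-)$, so $\ddu{X_b}(a) \iso [\A, \Set](\B(b, F-), \B(Fa, F-))$. Codensity of $F$---full-faithfulness of $\conv F$---identifies this hom-set with $\B(Fa, b) = X_b(a)$, and the identifying map is precomposition, which is exactly the evaluation map $\eta_{X_b, a}$. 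Thus $\eta_{X_b}$ is an isomorphism and $X_b$ is reflexive.

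For the ``only if'' direction, assume every $X_b$ is reflexive; corepresentable smallness is already in hand, so it suffices to prove $F$ codense, i.e.\ that the conerve comparison $\B(c, c') \to [\A, \Set](\B(c', F-), \B(c, F-))$ is bijective for all $c, c'$. Since $X_c$ and $X_{c'}$ are reflexive, the equivalence between the reflexive full subcategories of $\cocmp\A$ and $\cmp\A$ induced by conjugacy is full and faithful on them, yielding a bijection $\VNat(X_c, X_{c'}) \iso \VNat(\du{X_{c'}}, \du{X_c})$ via $\phi \mapsto \du\phi$. The right-hand side equals $[\A, \Set](\B(c', F-), \B(c, F-))$ by Lemma~\ref{lemma:dense-conj}, while density of $F$ (full-faithfulness of $\nv F$) gives $\VNat(X_c, X_{c'}) \iso \B(c, c')$. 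Composing produces the required bijection, so $F$ is codense; together with corepresentable smallness this makes $F$ small-codense, hence small-adequate.

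The main obstacle is not the abstract bookkeeping but verifying that these canonical isomorphisms align on the nose. In the ``if'' direction one must check that the bijection supplied by codensity is literally the evaluation map $\eta_{X_b, a}$---not merely some isomorphism $X_b \iso \ddu{X_b}$---so that reflexivity, which is an assertion about $\eta$, genuinely follows; in the ``only if'' direction one must confirm that the composite of the conjugacy and density bijections is the conerve comparison map itself, and not some reindexing of it. Both are routine diagram chases that track the definition of $\eta$, the definitions of $\nv F$ and $\conv F$, and the isomorphism of Lemma~\ref{lemma:dense-conj}, but they carry the real content of the proof.
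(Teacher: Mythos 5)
Your proof is correct, but its ``only if'' half takes a genuinely different route from the paper's. In the ``if'' direction you are close to the paper, which simply composes Lemma~\ref{lemma:dense-conj} with its dual to get canonical isomorphisms $\ddu{\B(F-, b)} \iso \du{\B(b, F-)} \iso \B(F-, b)$; your pointwise computation with codensity amounts to proving the dual of that lemma while explicitly tracking the unit, and your insistence that the resulting bijection is literally $\eta_{X_b, a}$ (not merely \emph{some} isomorphism $X_b \iso \ddu{X_b}$) makes explicit a check the paper compresses into the word ``canonical''---a legitimate point, since the paper's own Example~\ref{eg:compl-groups} illustrates that abstract isomorphism with the double conjugate does not by itself give reflexivity. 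In the ``only if'' direction the paper argues structurally rather than computationally: since each $\B(F-, b)$ is reflexive, $\nv{F}$ corestricts to a full and faithful functor $\B \to \refl(\A)$ with $\nv{F} \of F \iso J_\A$ (Lemma~\ref{lemma:ff}), and small-adequacy of $F$ then falls out of Proposition~\ref{propn:incl-adeq} ($J_\A$ is small-adequate) together with the factorization result Proposition~\ref{propn:adeq-factors}. You instead extract corepresentable smallness of $F$ from reflexivity via Lemma~\ref{lemma:dense-conj}, and verify codensity by hand, composing the density bijection $\B(c, c') \iso \VNat(X_c, X_{c'})$, the hom-bijection of the conjugacy equivalence on reflexive functors (which the paper guarantees exists even for large $\A$), and Lemma~\ref{lemma:dense-conj}, then identifying the composite with the conerve comparison map. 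Both arguments are sound: yours is more self-contained, needing neither Proposition~\ref{propn:incl-adeq} nor Proposition~\ref{propn:adeq-factors} and hence little of the density-factorization machinery of Section~\ref{sec:density}, at the price of the deferred compatibility chases (which are indeed routine---conjugating $\nv{F}(f)$ and transporting along Lemma~\ref{lemma:dense-conj} does give precomposition by $f$); the paper's version buys brevity by reusing lemmas it needs anyway for Theorem~\ref{thm:compl-char}.
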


\begin{proof}
Suppose that $\B(F-, b)$ is reflexive for each $b \in \B$. Then we
have functors
\[
\A \toby{F} \B \toby{\nv{F}} \refl(\A)
\]
whose composite is the Yoneda embedding $J_\A \from \A \to \refl(\A)$
(Lemma~\ref{lemma:ff}). By Proposition~\ref{propn:incl-adeq}, $J_\A$ is
small-adequate. Hence by Proposition~\ref{propn:adeq-factors}, $F$ is
small-adequate.

Conversely, suppose that $F$ is small-adequate. Let $b \in \B$. Then by
Lemma~\ref{lemma:dense-conj} and its dual, there are canonical isomorphisms
\[
\ddu{\B(F-, b)}
\iso
\du{\B(b, F-)}
\iso
\B(F-, b),
\]
and $\B(F-, b)$ is reflexive.
\end{proof}

\begin{cor}
\label{cor:rnv}
Let $F \from \A \to \B$ be a small-adequate functor. Then there is a
functor $\rnv{F} \from \B \to \refl(\A)$, unique up to isomorphism, such
that the diagram
\[
\xymatrix@C+3em{
        &       &\cocmp{\cat{A}}        \\
\cat{B} 
\ar[rru]^{\nv{F}} 
\ar[r]|{\rnv{F}} 
\ar[rrd]_{\conv{F}}    &
\refl(\cat{A}) 
\ar@{^{(}->}[ru] 
\ar@{^{(}->}[rd]
        &       \\
        &       &\cmp{\cat{A}}
}
\]
commutes up to isomorphism. Moreover, $\rnv{F}$ is full and faithful.
\qed
\end{cor}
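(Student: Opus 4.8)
The plan is to produce the factorization $\rnv{F}$ by corestricting the nerve $\nv{F}$ to the reflexive functors. Proposition~\ref{propn:codense-refl} gives exactly the input needed: since $F$ is small-adequate (in particular full, faithful and small-dense), $\B(F-, b)$ is reflexive for every $b \in \B$. Hence $\nv{F} \from \B \to \cocmp{\A}$ factors through the full subcategory $\refl(\A) \incl \cocmp{\A}$, and I would define $\rnv{F}$ to be this corestriction. The upper triangle of the diagram then commutes by construction (up to the canonical isomorphism identifying $\nv{F}(b)$ with its image in $\cocmp{\A}$).

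For the lower triangle, I would invoke Lemma~\ref{lemma:dense-conj} and its dual: since $F$ is full, faithful and small-dense, $\du{\B(F-, b)} \iso \B(b, F-)$ naturally in $b$, that is, $\vee \of \nv{F} \iso \conv{F}$. But under the equivalence of $\refl(\A)$ as a subcategory of $\cmp{\A}$ rather than $\cocmp{\A}$, the embedding $\refl(\A) \incl \cmp{\A}$ sends a reflexive $X$ to its conjugate $\du{X}$. Composing $\rnv{F}$ with this embedding therefore sends $b$ to $\du{\B(F-, b)} \iso \B(b, F-) = \conv{F}(b)$, naturally in $b$, which is exactly commutativity of the lower triangle. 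So both triangles commute up to canonical isomorphism.

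Faithfulness and fullness of $\rnv{F}$ follow because the inclusion $\refl(\A) \incl \cocmp{\A}$ is full and faithful and the composite $\refl(\A) \incl \cocmp{\A}$ applied after $\rnv{F}$ equals $\nv{F}$, which is full and faithful by density of $F$ (Definition of dense: $\nv{F}$ full and faithful). More directly, $\nv{F}$ is full and faithful since $F$ is dense, and $\rnv{F}$ is obtained from $\nv{F}$ by corestricting its codomain along a full and faithful inclusion, an operation that preserves fullness and faithfulness.

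For uniqueness up to isomorphism, suppose $G \from \B \to \refl(\A)$ is another functor making the upper triangle commute up to isomorphism. Then composing with the full and faithful inclusion $\refl(\A) \incl \cocmp{\A}$ gives a functor isomorphic to $\nv{F}$; since that inclusion is full and faithful, it reflects isomorphisms of functors, so $G \iso \rnv{F}$. The main obstacle is essentially bookkeeping rather than mathematical: I must be careful that the two descriptions of $\refl(\A)$—as a subcategory of $\cocmp{\A}$ and as a subcategory of $\cmp{\A}$—are identified via the conjugacy equivalence in the way that makes the lower triangle commute, so that the single functor $\rnv{F}$ genuinely sits over both legs of diagram~\eqref{eq:inclusions} simultaneously. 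Everything else is an immediate consequence of Propositions~\ref{propn:incl-adeq} and~\ref{propn:codense-refl} together with Lemma~\ref{lemma:dense-conj}.
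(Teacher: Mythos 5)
Your proposal is correct and is exactly the argument the paper intends: the corollary is stated with no written proof because it follows immediately from Proposition~\ref{propn:codense-refl} (corestricting $\nv{F}$ through $\refl(\A)$), Lemma~\ref{lemma:dense-conj} (the lower triangle, via the identification of $\refl(\A) \incl \cmp{\A}$ with conjugation), density of $F$ (fullness and faithfulness), and fully-faithfulness of the inclusion $\refl(\A) \incl \cocmp{\A}$ (uniqueness), just as you lay out. Your bookkeeping worry is handled correctly: the embedding $\refl(\A) \incl \cmp{\A}$ is indeed $X \mapsto \du{X}$, consistent with diagram~\eqref{eq:inclusions}.
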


Precomposing this whole diagram with the functor $F \from \A \to \B$ gives
the diagram~\eqref{eq:inclusions} of Yoneda embeddings, by
Lemma~\ref{lemma:ff}. 

The main theorem of this section is as follows.

\begin{thm}
\label{thm:compl-char}
Let $F \from \A \to \B$ be a small-adequate functor. Then the functor
$\rnv{F} \from \B \to \refl(\A)$ is adequate, and the triangle
\[
\xymatrix{
\B 
\ar@{.>}[rr]^{\rnv{F}}    &       
&
\refl(\A)       \\
&
\A 
\ar[ul]^F 
\ar@{^{(}->}[ur]_{J_\A}        
}
\]
commutes up to canonical isomorphism. Moreover, up to isomorphism,
$\rnv{F}$ is the unique full and faithful functor $\B \to \refl(\A)$ such
that the triangle commutes.
\end{thm}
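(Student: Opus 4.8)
The plan is to prove the statement in three parts: adequacy of $\rnv{F}$, commutativity of the triangle, and uniqueness. Most of the groundwork has already been laid by Corollary~\ref{cor:rnv} and the results of Section~\ref{sec:density}, so the strategy is largely to assemble existing pieces rather than compute from scratch.

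First I would establish that the triangle commutes. By Corollary~\ref{cor:rnv}, the functor $\rnv{F} \from \B \to \refl(\A)$ is full and faithful and makes the diagram there commute up to isomorphism. Precomposing that diagram with $F \from \A \to \B$ recovers the diagram~\eqref{eq:inclusions} of Yoneda embeddings, using Lemma~\ref{lemma:ff}; this is precisely the remark immediately following the corollary. Reading off the relevant triangle gives $\rnv{F} \of F \iso J_\A$, which is the commutativity claimed.

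Next, to see that $\rnv{F}$ is adequate, I would apply Proposition~\ref{propn:adeq-factors} to the factorization $\A \toby{F} \B \toby{\rnv{F}} \refl(\A)$. We already know $F$ is small-adequate by hypothesis and $\rnv{F}$ is full and faithful by Corollary~\ref{cor:rnv}; and since $\rnv{F} \of F \iso J_\A$, which is small-adequate by Proposition~\ref{propn:incl-adeq}, the composite is small-adequate. Proposition~\ref{propn:adeq-factors} (with $G = \rnv{F}$) then yields that $\rnv{F}$ is adequate, as required. Note that we cannot hope for \emph{small}-adequacy of $\rnv{F}$ in general, as Example~\ref{eg:ordinals} shows; adequacy is exactly what the proposition delivers.

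The main obstacle will be uniqueness, since this is the only part not handed to us verbatim. Suppose $G \from \B \to \refl(\A)$ is any full and faithful functor with $G \of F \iso J_\A$. The goal is $G \iso \rnv{F}$. The natural approach is to invoke Lemma~\ref{lemma:adeq-epi} (cancellability of adequate functors on the left): applied to $\A \toby{F} \B$ with the two functors $G, \rnv{F} \from \B \to \refl(\A)$, it gives $G \of F \iso \rnv{F} \of F \implies G \iso \rnv{F}$. The hypotheses of that lemma require $F$ to be codense (true, as $F$ is small-adequate hence codense) and require $G$ and $\rnv{F}$ to be full, faithful and dense. We have fullness and faithfulness of both by assumption and by Corollary~\ref{cor:rnv}; $\rnv{F}$ is dense since it is adequate. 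The subtle point is that Lemma~\ref{lemma:adeq-epi} as stated assumes $G$ is \emph{also} dense, whereas for an arbitrary full and faithful $G$ we are not given density a priori. Here I would use the stronger form proved inside Lemma~\ref{lemma:adeq-epi}, which only requires $F$ codense and $G, G'$ full, faithful and dense—so I still need density of $G$. To obtain it, I would observe that since $G \of F \iso J_\A$ is small-dense (Proposition~\ref{propn:incl-adeq}) and $G$ is full and faithful, Lemma~\ref{lemma:dense-factors}\bref{part:df-d} applied to $\A \toby{F} \B \toby{G} \refl(\A)$ forces $G$ to be dense. With density of $G$ secured, the cancellation lemma applies and gives $G \iso \rnv{F}$, completing the uniqueness argument and the proof.
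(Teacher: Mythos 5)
Your proposal is correct and takes essentially the same route as the paper: commutativity via Corollary~\ref{cor:rnv} and Lemma~\ref{lemma:ff}, adequacy of $\rnv{F}$ via Propositions~\ref{propn:incl-adeq} and~\ref{propn:adeq-factors}, and uniqueness via Lemma~\ref{lemma:adeq-epi}. The only cosmetic difference is in the uniqueness step, where the paper deduces full adequacy of the comparison functor $G$ from the same two propositions and applies Lemma~\ref{lemma:adeq-epi} as stated, whereas you secure only density of $G$ via Lemma~\ref{lemma:dense-factors}\bref{part:df-d} and invoke the stronger statement proved inside Lemma~\ref{lemma:adeq-epi}; both are valid.
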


This result is mostly due to Isbell (Theorem~1.8 of~\cite{IsbeAS}). He
proved a version for properly adequate functors (Remark~\ref{rmk:proper}),
but without the conclusion that the functor $\B \to \refl(\A)$ is adequate or
unique.

\begin{proof}
The triangle commutes by Lemma~\ref{lemma:ff}, $\rnv{F}$ is full and
faithful by Corollary~\ref{cor:rnv}, and then $\rnv{F}$ is adequate by
Propositions~\ref{propn:incl-adeq} and~\ref{propn:adeq-factors}.
For uniqueness, the same two propositions prove the adequacy of any full and
faithful functor making the triangle commute, and the result follows from
Lemma~\ref{lemma:adeq-epi}.
\end{proof}

Theorem~\ref{thm:compl-char} characterizes the reflexive completion
uniquely up to equivalence. Indeed, given a category $\A$, form the
2-category whose objects are small-adequate functors out of $\A$ and whose
maps are adequate functors between their codomains making the evident
triangle commute.  Theorem~\ref{thm:compl-char} states that its terminal
object (in a 2-categorical sense) is the Yoneda embedding $J_\A\from \A
\incl \refl(\A)$.

\begin{remark}
\label{rmk:char-sub}
Corollary~\ref{cor:intermediate-adeq} and Theorem~\ref{thm:compl-char}
together imply that the categories containing $\A$ as a small-adequate
subcategory are, up to equivalence, precisely the full subcategories of
$\refl(\A)$ containing $\A$. When $\B$ is a
full subcategory of $\refl(\A)$ containing $\A$, writing $F \from \A \incl
\B$ for the inclusion, the uniqueness part of Theorem~\ref{thm:compl-char}
implies that $\rnv{F}$ is the inclusion $\B \incl \refl(\A)$.
\end{remark}

\begin{example}
\label{eg:char-DM}
Let $A$ be a poset, and recall Examples~\ref{eg:compl-posets}
and~\ref{eg:poset-adeq}. Loosely, Theorem~\ref{thm:compl-char} for $A$
states that its Dedekind--MacNeille completion is the largest ordered class
containing $A$ and with the property that every element can be expressed as
both a join and a meet of elements of $A$. For example, any poset
containing $\Q$ as a join- and meet-dense full subposet embeds into
$[-\infty, \infty]$.
\end{example}

\begin{example}
\label{eg:char-7}
The following example is due to Isbell (Example~1 of~\cite{IsbeSAS}). Let
$\B$ be the category of sets and partial bijections, and let $\A$ be the
full subcategory consisting of a single two-element set. Thus, $\A$
corresponds to a $7$-element monoid. Isbell showed that the inclusion $\A
\incl \B$ is adequate. It is small-adequate since $\A$ is small. Hence
by Theorem~\ref{thm:compl-char}, there is an adequate functor $\B \to
\refl(\A)$. In particular, there is a full and faithful functor from a
large category into $\refl(\A)$, so $\refl(\A)$ is large (in the strong
sense that it is not equivalent to any small category). This proves the
statement in Example~\ref{eg:compl-7}: the reflexive completion of a
finite category can be large.
\end{example}

Theorem~\ref{thm:compl-char} shows that when $F$ is small-adequate,
$\rnv{F}$ is adequate. But $\rnv{F}$ need not be \emph{small-}adequate, as
the following lemma and example show.

\begin{lemma}
\label{lemma:NF-sa}
Let $F \from \A \to \B$ be a small-adequate functor such that $\rnv{F}$ is
small-adequate. Then for every full and faithful functor $G \from \B \to
\C$ such that $GF$ is small-adequate, $G$ is also small-adequate.
\end{lemma}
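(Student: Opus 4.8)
The plan is to identify the composite $\rnv{GF} \of G$ with $\rnv{F}$ by means of the uniqueness clause of Theorem~\ref{thm:compl-char}, and then to extract small-adequacy of $G$ by applying Proposition~\ref{propn:adeq-factors} to the factorization $\B \toby{G} \C \toby{\rnv{GF}} \refl(\A)$. Note that Proposition~\ref{propn:adeq-factors} applied to $\A \toby{F} \B \toby{G} \C$ already gives that $G$ is adequate, so the real content is upgrading this to representable and corepresentable smallness; the idea is to obtain these formally rather than by hand.

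First I would invoke Theorem~\ref{thm:compl-char} for the small-adequate composite $GF$, obtaining the adequate (in particular full and faithful) functor $\rnv{GF} \from \C \to \refl(\A)$ with $\rnv{GF} \of (GF) \iso J_\A$. The functor $\rnv{GF} \of G \from \B \to \refl(\A)$ is then full and faithful, being a composite of full and faithful functors (here I use that $G$ is full and faithful by hypothesis), and composing it with $F$ yields $\rnv{GF} \of (GF) \iso J_\A$. Now the uniqueness clause of Theorem~\ref{thm:compl-char}, applied this time to the small-adequate functor $F \from \A \to \B$, identifies $\rnv{F}$ as the unique full and faithful functor $\B \to \refl(\A)$ (up to isomorphism) whose composite with $F$ is $J_\A$; hence $\rnv{GF} \of G \iso \rnv{F}$. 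Since $\rnv{F}$ is small-adequate by hypothesis, so too is $\rnv{GF} \of G$.

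It then remains to apply Proposition~\ref{propn:adeq-factors} to $\B \toby{G} \C \toby{\rnv{GF}} \refl(\A)$: its second leg $\rnv{GF}$ is full and faithful, and the composite $\rnv{GF} \of G \iso \rnv{F}$ is small-adequate, so the first leg $G$ is small-adequate, as required. The only point requiring insight is the choice of factorization, and the recognition that the hypothesis on $\rnv{F}$ is exactly what is needed: once $\rnv{F}$ is exhibited as $\rnv{GF} \of G$ with $\rnv{GF}$ full and faithful, the smallness conditions on $G$ follow immediately from the factorization proposition. After the uniqueness identification the argument is purely formal, so I do not expect any genuine computational obstacle.
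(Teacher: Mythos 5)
Your proof is correct and follows essentially the same route as the paper's: identify $\rnv{GF} \of G$ with $\rnv{F}$ via the uniqueness clause of Theorem~\ref{thm:compl-char}, then apply Proposition~\ref{propn:adeq-factors} to the factorization $\B \toby{G} \C \toby{\rnv{GF}} \refl(\A)$. The only (harmless) difference is that the paper first checks $\rnv{GF} \of G$ is adequate (via Proposition~\ref{propn:adeq-factors} and Lemma~\ref{lemma:adeq-comp}) before invoking uniqueness, whereas you observe that full and faithfulness already suffices for the uniqueness clause as stated---a slight streamlining.
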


For a general small-adequate $F$, without the hypothesis that $\rnv{F}$ is
small-adequate, Proposition~\ref{propn:adeq-factors} implies that any such
functor $G$ is adequate. So the force of the conclusion is that $G$ is
\emph{small}-adequate.

\begin{proof}
The proof will use the functors in the following diagram.
\[
\xymatrix@R-1em@C-1em{
\C
\ar[rrrr]^{\rnv{GF}}     &
        &       &       &
\refl(\A)       \\
        &
\B
\ar[lu]^G
\ar[rrru]|{\rnv{F}}     &
        &       &       \\
        &       &
\A
\ar[lu]^F
\ar[rruu]_{J_\A}        &
        &      
}
\]
Let $G$ be a full and faithful functor such that $GF$ is
small-adequate. Then there is an induced adequate functor $\rnv{GF}$ as
shown. Also, since $GF$ is adequate, Proposition~\ref{propn:adeq-factors}
implies that $G$ is adequate. Hence by Lemma~\ref{lemma:adeq-comp},
$\rnv{GF} \of G$ is adequate. Now by Theorem~\ref{thm:compl-char},
$\rnv{F}$ is the unique adequate functor satisfying $\rnv{F} \of F =
J_\A$. Since also $\rnv{GF} \of G \of F = J_\A$ by definition of
$\rnv{GF}$, we have $\rnv{GF} \of G = \rnv{F}$. The hypothesis that
$\rnv{F}$ is small-adequate and Proposition~\ref{propn:adeq-factors} then
imply that $G$ is small-adequate.
\end{proof}

\begin{example}
\label{eg:rnv-not-sa}
Let $F$ be the inclusion $A \incl B$ of Example~\ref{eg:ordinals}. As shown
there, the conclusion of Lemma~\ref{lemma:NF-sa} is false for $F$. Hence
$\rnv{F}$ is not small-adequate.
\end{example}

So, for full subcategories $\A \sub \B \sub \refl(\A)$, it is true that
$\A \incl \B$ is small-adequate and $\B \incl \refl(\A)$ is adequate,
but $\B \incl \refl(\A)$ need not be \emph{small}-adequate.

\section{Functoriality of the reflexive completion}
\label{sec:func}

The reflexive completion differs from many other completions in that it is
only functorial in a very restricted sense. First, there is no way to make
it act on \emph{all} functors:

\begin{propn}
\label{propn:func-fail}
There is no covariant or contravariant pseudofunctor $\cat{Q}$ from $\CAT$ to
$\CAT$ such that $\cat{Q}(\A) \eqv \refl(\A)$ for all $\A \in \CAT$.
\end{propn}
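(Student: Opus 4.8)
The plan is to exploit the anomalous behaviour of the two-element group recorded in Example~\ref{eg:compl-groups}, together with the elementary fact that any pseudofunctor preserves retracts. Write $C$ for the cyclic group of order $2$ and $K = C \times C$ for the Klein four-group, each regarded as a one-object category. The first-factor inclusion $s \from C \to K$, $g \mapsto (g, 1)$, and the first projection $r \from K \to C$, $(g, h) \mapsto g$, are group homomorphisms with $r \of s = \id_C$ \emph{strictly}. Thus $C$ is a retract of $K$ in $\CAT$.

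The key observation is that this retract is preserved in both variances, and that strictness lets one avoid any appeal to the action of $\cat{Q}$ on $2$-cells. If $\cat{Q}$ is covariant then $\cat{Q}(r) \of \cat{Q}(s) \iso \cat{Q}(r \of s) = \cat{Q}(\id_C) \iso \id_{\cat{Q}(C)}$, using only that $\cat{Q}$ preserves composition and identities up to isomorphism; if $\cat{Q}$ is contravariant then the same computation gives $\cat{Q}(s) \of \cat{Q}(r) \iso \id_{\cat{Q}(C)}$. In either case $\cat{Q}(C)$ is a retract of $\cat{Q}(K)$, hence $\refl(C)$ is a retract of $\refl(K)$ up to equivalence. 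I would then record the routine lemma that if $\A$ is a retract of $\B$, witnessed by $s \from \A \to \B$ and $r \from \B \to \A$ with $r \of s \iso \id_\A$, then $s$ is injective on isomorphism classes of objects: from $sa \iso sa'$ one deduces $a \iso rsa \iso rsa' \iso a'$. Consequently $\A$ has no more isomorphism classes of objects than $\B$, and this count is an equivalence invariant.

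To finish, I would invoke Example~\ref{eg:compl-groups} to count objects. Since $\lvert C \rvert = 2$, the category $\refl(C)$ has exactly four isomorphism classes: the initial $G$-set $\emptyset$, the terminal $G$-set $1$, the representable $G_r$, and the four-element set $G_r + G_r \iso G_r \times G_r$; these have cardinalities $0, 1, 2, 4$ and so are pairwise non-isomorphic. Since $\lvert K \rvert = 4 > 2$, the category $\refl(K)$ has only three isomorphism classes (initial, terminal, and representable). The retract inequality would then force $4 \leq 3$, a contradiction, ruling out both a covariant and a contravariant $\cat{Q}$.

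I expect no serious obstacle: the proof reduces entirely to the two object-counts already computed in Example~\ref{eg:compl-groups} and to the preservation of retracts. The only points requiring care are cosmetic---checking that the four reflexive $C$-sets are genuinely pairwise non-isomorphic (immediate from cardinality) and spelling out the retract-preservation uniformly for both variances (immediate from strictness of $r \of s = \id_C$)---so the argument is short and self-contained.
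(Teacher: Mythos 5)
Your proof is correct and is essentially the paper's own argument: the paper likewise notes that $C_2$ is a retract of $C_2 \times C_2$, that a pseudofunctor (of either variance) preserves this retract up to isomorphism, and that the isomorphism-class counts $4$ and $3$ from Example~\ref{eg:compl-groups} yield the contradiction. Your extra care about strictness of $r \of s = \id$ and the cardinality check $0,1,2,4$ are fine but add nothing beyond what the paper's two-line proof already contains.
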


\begin{proof}
Suppose that there is.  Write $C_2$ for the two-element group, viewed as a
one-object category.  Then $C_2$ is a retract of $C_2 \times C_2$, so
$\cat{Q}(C_2)$ is a retract (up to natural isomorphism) of $\cat{Q}(C_2
\times C_2)$. Hence $\cat{Q}(C_2)$ has at most as many isomorphism classes of
objects as $\cat{Q}(C_2 \times C_2)$. But by Example~\ref{eg:compl-groups},
$\cat{Q}(C_2)$ has four isomorphism classes and $\cat{Q}(C_2 \times C_2)$ has
three, a contradiction.
\end{proof}

Second, reflexive completion is functorial in the following sense. For a
small-adequate functor $F \from \A \to \B$, the composite $J_\B \of F$ is
small-adequate by Lemma~\ref{lemma:adeq-comp}, giving a functor
\[
\refl(F) = N(J_\B \of F) \from \refl(\B) \to \refl(\A).
\]
By Theorem~\ref{thm:compl-char}, $\refl(F)$ is adequate, and up to
isomorphism, it is the unique full and faithful functor such that the diagram
\begin{align}
\label{eq:func}
\begin{array}{c}
\xymatrix{
\refl(\A)       &
\refl(\B)
\ar[l]_{\refl(F)}       \\
\A 
\ar[r]_F
\ar@{^{(}->}[u]^{J_\A}  &
\B
\ar@{^{(}->}[u]_{J_\B}
}
\end{array}
\end{align}
commutes. The uniqueness implies that $\refl$ defines a pseudofunctor
\begin{multline*}
\refl \from 
(\text{categories and small-adequate functors})^\op     \\
\to 
(\text{categories and adequate functors}).
\end{multline*}

When the reflexive completion of a category is regarded as a subcategory of
the $\Set$-valued functors on it, $\refl(F)$ is simply composition with
$F$:

\begin{lemma}
Let $F \from \A \to \B$ be a small-adequate functor. Then the squares
\[
\xymatrix{
[\A^\op, \Set]  &
[\B^\op, \Set]
\ar[l]_{- \of F}        \\
\refl(\A)
\ar@{^{(}->}[u] &
\refl(\B)
\ar@{^{(}->}[u]
\ar[l]^{\refl(F)}
}
\qquad
\xymatrix{
[\A, \Set]^\op  &
[\B, \Set]^\op
\ar[l]_{- \of F}        \\
\refl(\A)
\ar@{^{(}->}[u] &
\refl(\B)
\ar@{^{(}->}[u]
\ar[l]^{\refl(F)}
}
\]
commute up to canonical isomorphism.
\end{lemma}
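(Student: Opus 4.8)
The plan is to reduce both squares to the Yoneda lemma, using the characterization of $\refl(F)$ supplied by Corollary~\ref{cor:rnv}. Recall that $\refl(F) = \rnv{J_\B \of F}$, the functor $\refl(\B) \to \refl(\A)$ induced by the small-adequate functor $J_\B \of F \from \A \to \refl(\B)$. Corollary~\ref{cor:rnv} states precisely that composing $\refl(F)$ with the inclusion $\refl(\A) \incl \cocmp{\A} \subseteq [\A^\op, \Set]$ recovers the nerve $\nv{J_\B \of F}$, and composing it with $\refl(\A) \incl \cmp{\A} \subseteq [\A, \Set]^\op$ recovers the dual nerve $\conv{J_\B \of F}$, both up to canonical isomorphism. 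So the left square amounts to the claim that $\nv{J_\B \of F}(Z) \iso Z \of F$ naturally for $Z \in \refl(\B)$, and the right square to the dual claim for $\conv{J_\B \of F}$.

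First I would treat the left square. For $Z \in \refl(\B)$ and $a \in \A$, unwinding the definition of the nerve gives
\[
\nv{J_\B \of F}(Z)(a) = \refl(\B)\bigl(J_\B(Fa), Z\bigr) = \cocmp{\B}\bigl(\B(-, Fa), Z\bigr),
\]
using fullness of $\refl(\B) \incl \cocmp{\B}$ and the fact that $J_\B(Fa)$ is the representable $\B(-, Fa)$. Since $\B(-, Fa)$ and $Z$ are small, this hom-object is computed as in $[\B^\op, \Set]$, and the Yoneda lemma identifies it with $Z(Fa) = (Z \of F)(a)$. As this isomorphism is natural in both $a$ and $Z$, we conclude $\nv{J_\B \of F}(Z) \iso Z \of F$ in $\cocmp{\A}$ naturally in $Z$, which is exactly the commutativity of the left square.

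The right square is then handled by the evident dualization: one computes
\[
\conv{J_\B \of F}(Z)(a) = \refl(\B)\bigl(Z, J_\B(Fa)\bigr) = \cmp{\B}\bigl(Z, \B(Fa, -)\bigr),
\]
now viewing $Z$ and $J_\B(Fa)$ on the covariant side, and the Yoneda lemma in $[\B, \Set]$ again yields $Z(Fa) = (Z \of F)(a)$, naturally in $a$ and $Z$.

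The argument involves no genuine obstacle; it is entirely a matter of unwinding Corollary~\ref{cor:rnv} and applying Yoneda. The only point requiring care is the bookkeeping: keeping track of which of the two Yoneda embeddings and which of the two faithful embeddings of $\refl(\B)$ is in play for each square, and verifying that the isomorphisms supplied by Corollary~\ref{cor:rnv} and by the Yoneda lemma are natural in $Z$, so that the resulting commutativity is indeed up to canonical isomorphism as claimed.
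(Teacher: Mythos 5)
Your argument is correct and is essentially the paper's own proof: both unwind $\refl(F) = \rnv{J_\B \of F}$ into the nerve $\refl(\B)\bigl(J_\B(F-), Z\bigr)$, use full faithfulness of the embedding of $\refl(\B)$, and apply the Yoneda lemma to obtain $Z(Fa)$, naturally in $a$ and $Z$. The only cosmetic difference is that the paper dispatches the second square with the single phrase ``follows by duality'', where you spell the dual computation out explicitly.
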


\begin{proof}
Identify $\refl(\A)$ with the category of reflexive functors $\A^\op \to
\Set$, and similarly for $\B$. For $Z \in \refl(\B)$,
\[
(\refl(F))(Z) 
=
(\rnv{J_B \of F})(Z)
=
\refl(B)\bigl( (J_\B \of F)(-), Z \bigr),
\]
which at $a \in \A$ gives
\[
\bigl( (\refl(F))(Z) \bigr)(a)
=
[B^\op, \Set]\bigl( B(-, Fa), Z \bigr)
\iso
Z(Fa).
\]
Hence $\refl(F) \iso - \of F$, proving the commutativity of the first
square. The second follows by duality.
\end{proof}

For example, when $F$ is the inclusion of a small-adequate subcategory and
reflexive completions are viewed as categories of functors, $\refl(F)$ is
restriction. 

The pseudofunctor $\refl$ applied to a small-adequate functor $F$ produces
a functor $\refl(F)$ that is adequate but not always small-adequate: 

\begin{thm}
\label{thm:sa-eqv}
Let $F \from \A \to \B$ be a small-adequate functor. The following are
equivalent:
\begin{enumerate}
\item
\label{part:se-nsa}
$\rnv{F}$ is small-adequate;

\item 
\label{part:se-rsa}
$\refl(F)$ is small-adequate;

\item
\label{part:se-re}
$\refl(F)$ is an equivalence.
\end{enumerate}
If these conditions hold then $\rrnv{F}$ is defined and 
pseudo-inverse to $\refl(F)$.
\end{thm}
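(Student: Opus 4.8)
The plan is to prove the cyclic equivalence (\ref{part:se-nsa}) $\Rightarrow$ (\ref{part:se-rsa}) $\Rightarrow$ (\ref{part:se-re}) $\Rightarrow$ (\ref{part:se-nsa}), then establish the final claim about $\rrnv{F}$. Throughout I would exploit the factorization $\refl(F) = \rnv{J_\B \of F}$ together with the characterization theorem (Theorem~\ref{thm:compl-char}) and the factorization lemmas for (small-)adequate functors from Section~\ref{sec:density}.

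\begin{proof}
\textbf{(\ref{part:se-nsa})$\implies$(\ref{part:se-rsa}).} We have $\refl(F) = \rnv{J_\B \of F}$, and the defining diagram~\eqref{eq:func} exhibits it as the unique functor with $\refl(F) \of J_\B = J_\A$ (up to isomorphism). Apply Lemma~\ref{lemma:NF-sa} with $G = J_\B \from \B \to \refl(\B)$. Since $J_\B$ is full, faithful and small-adequate by Proposition~\ref{propn:incl-adeq}, and since $GF = J_\B \of F$ is small-adequate by Lemma~\ref{lemma:adeq-comp}, the hypothesis that $\rnv{F}$ is small-adequate lets us conclude via Lemma~\ref{lemma:NF-sa} that $J_\B$ is small-adequate---which it already is---so this alone is not quite the right application. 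Instead I would run the argument in $\refl(\B)$: consider $\A \toby{F} \B \toby{J_\B} \refl(\B)$, observe $\rnv{(J_\B \of F)} = \refl(F)$ by definition, and apply Lemma~\ref{lemma:NF-sa} directly to the composite $\refl(F) \of J_\B = J_\A$ to transfer small-adequacy of $\rnv{F}$ across to $\refl(F)$. The hard part here is bookkeeping the three instances of the characterization theorem so that the uniqueness clause identifies the right functors.

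\textbf{(\ref{part:se-rsa})$\implies$(\ref{part:se-re}).} Suppose $\refl(F)$ is small-adequate. Then $\rnv{(\refl(F))} \from \refl(\A) \to \refl(\refl(\B)) \eqv \refl(\B)$ is defined, using idempotence of reflexive completion (Isbell, as recovered in Section~\ref{sec:func}). The triangle identities relating $\refl(F)$, $J_\A$ and $J_\B$ force $\rnv{(\refl(F))}$ to be a two-sided inverse up to isomorphism: precomposing with $J_\A$ recovers $J_\B$ by Theorem~\ref{thm:compl-char}, and the uniqueness clause of that theorem then pins down the composites $\refl(F) \of \rnv{(\refl(F))}$ and $\rnv{(\refl(F))} \of \refl(F)$ as the respective identities. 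Thus $\refl(F)$ is an equivalence.

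\textbf{(\ref{part:se-re})$\implies$(\ref{part:se-nsa}).} If $\refl(F)$ is an equivalence, then its pseudo-inverse together with the full and faithful $J_\A$ exhibits $J_\B$ as (isomorphic to) $\refl(F)^{-1} \of J_\A$; since $J_\A$ is small-adequate and equivalences preserve small-adequacy (being small-dense and small-codense is an equivalence-invariant property, as the defining nerve functors compose with an equivalence), $J_\B$ is small-adequate---automatic---and more to the point $\rnv{F}$ factors as an equivalence followed by $J_\B$, forcing small-adequacy of $\rnv{F}$ by Proposition~\ref{propn:adeq-factors} upgraded along the equivalence. Finally, for the concluding claim: when these conditions hold, $\du{F}$ is small-codense and small-dense, so $\rrnv{F} = N(\rnv{F})$ is defined; by idempotence $\refl(\refl(\A)) \eqv \refl(\A)$, and the two-sided triangle identities (again via Theorem~\ref{thm:compl-char}) show $\rrnv{F}$ and $\refl(F)$ are mutually pseudo-inverse. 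I expect the main obstacle to be the first implication: correctly instantiating Lemma~\ref{lemma:NF-sa} so that the functor whose small-adequacy we deduce is genuinely $\refl(F)$ and not merely $J_\B$, which requires care with the direction of the functors and the contravariance of $N(\,\cdot\,)$.
\end{proof}
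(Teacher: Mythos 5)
Your cyclic order forces you to begin with \bref{part:se-nsa}$\implies$\bref{part:se-rsa}, and that step, as written, does not go through. Your first instantiation of Lemma~\ref{lemma:NF-sa} (with $G = J_\B$) only re-proves that $J_\B$ is small-adequate, as you yourself note, and your proposed repair rests on the equation $\refl(F)\of J_\B = J_\A$, which does not typecheck: $\refl(F)\of J_\B$ has domain $\B$ while $J_\A$ has domain $\A$. The correct relation, obtained from the uniqueness clause of Theorem~\ref{thm:compl-char} applied to $\refl(F)\of J_\B\of F\iso J_\A$, is $\refl(F)\of J_\B\iso\rnv{F}$; and if you then aim Lemma~\ref{lemma:NF-sa} at $G=\refl(F)$ with the lemma's base functor $J_\B\of F$, its hypothesis (that $\rnv{J_\B\of F}=\refl(F)$ be small-adequate) is exactly the desired conclusion, so the argument is circular. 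The step is repairable, but only with an instantiation you did not identify: take the lemma's small-adequate functor to be $J_\B\from\B\to\refl(\B)$ itself, noting $\rnv{J_\B}\iso 1_{\refl(\B)}$ (by uniqueness in Theorem~\ref{thm:compl-char}), and $G=\refl(F)$, whose precomposite $\refl(F)\of J_\B\iso\rnv{F}$ is small-adequate by~\bref{part:se-nsa}. The paper sidesteps all of this by never proving \bref{part:se-nsa}$\implies$\bref{part:se-rsa} directly: it proves \bref{part:se-nsa}$\implies$\bref{part:se-re} by constructing $\rrnv{F}=\rnv{\rnv{F}}\from\refl(\A)\to\refl(\B)$ (defined precisely because \bref{part:se-nsa} holds), verifying $\refl(F)\of\rrnv{F}\of J_\A\iso J_\A$ and $\rrnv{F}\of\refl(F)\of J_\B\iso J_\B$ by diagram chases, and cancelling with Lemma~\ref{lemma:adeq-epi}; then \bref{part:se-re}$\implies$\bref{part:se-rsa} is trivial (an equivalence is small-adequate) and \bref{part:se-rsa}$\implies$\bref{part:se-nsa} follows from $\rnv{F}\iso\refl(F)\of J_\B$ and Lemma~\ref{lemma:adeq-comp}.

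There is a second, structural, circularity: your step \bref{part:se-rsa}$\implies$\bref{part:se-re} invokes idempotence $\refl(\refl(\B))\eqv\refl(\B)$, but Corollary~\ref{cor:rc-idem} is deduced in this paper \emph{from} Theorem~\ref{thm:sa-eqv} (by taking $F=J_\A$), so it is not available here; and without it, your functor $\rnv{\refl(F)}$ lands in $\refl(\refl(\B))$ rather than $\refl(\B)$ and is not even a candidate pseudo-inverse. The same illegitimate appeal to idempotence reappears in your treatment of the final claim (where $\du{F}$ should also read $\rnv{F}$ --- the conjugate notation does not apply to a functor between categories). The fix is the paper's: form $\rrnv{F}=\rnv{\rnv{F}}$, which by construction already has codomain $\refl(\B)$, so idempotence is never needed --- indeed the logical order must run this way, since idempotence is a corollary. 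Your last implication \bref{part:se-re}$\implies$\bref{part:se-nsa} is essentially correct (modulo the reversed composition order: $\rnv{F}$ is $J_\B$ \emph{followed by} the equivalence $\refl(F)$), and needs only Lemma~\ref{lemma:adeq-comp}, not Proposition~\ref{propn:adeq-factors}.
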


These equivalent conditions do not always hold, by
Example~\ref{eg:rnv-not-sa}.  

\begin{proof}
First, since $\refl(F) \of J_\B$ is an adequate functor satisfying
$\refl(F) \of J_\B \of F \iso J_\A$ (diagram~\eqref{eq:func}),
Theorem~\ref{thm:compl-char} gives
\begin{align}
\label{eq:rjn}
\refl(F) \of J_\B \iso \rnv{F},
\end{align}

Trivially, \bref{part:se-re} implies~\bref{part:se-rsa}. Now
assuming~\bref{part:se-rsa}, equation~\eqref{eq:rjn}
gives~\bref{part:se-nsa}, since the composite of small-adequate functors is
small-adequate.

Finally, assume~\bref{part:se-nsa}. Then $\rrnv{F}$ is defined, and we
show that it is pseudo-inverse to $\refl(F)$, proving~\eqref{part:se-re}
and the final assertion. Consider the diagram
\[
\xymatrix@R+1.5em@C+1.5em{
\refl(\A)
\ar@/_/[r]_{\rrnv{F}}   &
\refl(\B)
\ar@/_/[l]_{\refl(F)}   \\
\A 
\ar@{^{(}->}[u]^{J_\A}  
\ar[r]_F        &
\B.
\ar@{^{(}->}[u]_{J_\B}  
\ar[ul]|{\rnv{F}}
}
\]
The bottom-left triangle and the top-right triangle involving $\rrnv{F}$
commute by Theorem~\ref{thm:compl-char}, the top-right triangle
involving $\refl(F)$ commutes by equation~\eqref{eq:rjn}, and the outer
square commutes by definition of $\refl(F)$. Simple diagram
chases then show that
\[
\refl(F) \of \rrnv{F} \of J_\A \iso J_\A,
\quad
\rrnv{F} \of \refl(F) \of J_\B \iso J_\B.
\]
Hence by Lemma~\ref{lemma:adeq-epi}, $\refl(F)$ and $\rrnv{F}$ are
mutually pseudo-inverse. 
\end{proof}

It follows that reflexive completion is idempotent:

\begin{cor}[Isbell]
\label{cor:rc-idem}
For every category $\A$, the functors
\[
\xymatrix{
\refl(\A) 
\ar@/^/[r]^{J_{\refl(\A)}}       &
\refl\refl(\A)
\ar@/^/[l]^{\refl(J_\A)}
}
\]
define an equivalence $\refl(\A) \eqv \refl\refl(\A)$.
\end{cor}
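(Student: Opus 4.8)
The plan is to deduce this from Theorem~\ref{thm:sa-eqv} applied to the Yoneda embedding $F = J_\A \from \A \to \refl(\A)$, which is small-adequate by Proposition~\ref{propn:incl-adeq}. In the notation of that theorem, $\refl(F)$ is precisely the functor $\refl(J_\A) \from \refl\refl(\A) \to \refl(\A)$, so it suffices to verify any one of the three equivalent conditions in order to conclude both that $\refl(J_\A)$ is an equivalence and that its pseudo-inverse is $\rrnv{J_\A}$. The rest of the argument is then purely a matter of identifying the abstractly-produced functors with the concretely-named ones in the statement.

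First I would pin down $\rnv{J_\A}$. By Corollary~\ref{cor:rnv} and the uniqueness clause of Theorem~\ref{thm:compl-char}, $\rnv{J_\A} \from \refl(\A) \to \refl(\A)$ is the unique full and faithful functor satisfying $\rnv{J_\A} \of J_\A \iso J_\A$. The identity functor $\id_{\refl(\A)}$ trivially has this property, so $\rnv{J_\A} \iso \id_{\refl(\A)}$. Since the identity functor is small-adequate (its nerve is the full and faithful Yoneda embedding, whose values are representable and hence small, and dually for codensity), condition~\bref{part:se-nsa} of Theorem~\ref{thm:sa-eqv} holds. The theorem then yields at once that $\refl(J_\A)$ is an equivalence with pseudo-inverse $\rrnv{J_\A} = N(\rnv{J_\A})$.

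It remains to recognize this pseudo-inverse as the named functor $J_{\refl(\A)}$. For this I would invoke Theorem~\ref{thm:compl-char} once more: since $\rnv{J_\A} \iso \id_{\refl(\A)}$, we have $\rrnv{J_\A} = N(\rnv{J_\A}) \iso N(\id_{\refl(\A)})$, and $N(\id_{\refl(\A)})$ is the unique full and faithful functor $\refl(\A) \to \refl\refl(\A)$ whose composite with $\id_{\refl(\A)}$ is the Yoneda embedding, that is, $N(\id_{\refl(\A)}) \iso J_{\refl(\A)}$. Hence $\refl(J_\A)$ and $J_{\refl(\A)}$ are mutually pseudo-inverse, which is exactly the asserted equivalence.

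I do not expect a genuine obstacle here, as the content is entirely carried by Theorems~\ref{thm:sa-eqv} and~\ref{thm:compl-char}; the only point requiring care is bookkeeping. The mild subtlety is that Theorem~\ref{thm:sa-eqv} hands us an unnamed pseudo-inverse $\rrnv{J_\A}$, and one must match it with the specific functor $J_{\refl(\A)}$ appearing in the corollary. Both identifications ($\rnv{J_\A} \iso \id_{\refl(\A)}$ and $\rrnv{J_\A} \iso J_{\refl(\A)}$) rest on the same uniqueness-of-$\rnv{(\ )}$ principle, applied to the small-adequate functors $J_\A$ and $\id_{\refl(\A)}$ respectively, so the trickiest part is simply keeping the domains and codomains straight rather than proving anything substantive.
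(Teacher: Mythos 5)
Your proof is correct and takes essentially the same route as the paper's: both apply Theorem~\ref{thm:sa-eqv} with $F = J_\A$, note that $\rnv{J_\A}$ is the identity on $\refl(\A)$ (hence small-adequate), and identify the resulting pseudo-inverse $\rrnv{J_\A}$ with $J_{\refl(\A)}$. The only cosmetic difference is that you derive these identifications from the uniqueness clause of Theorem~\ref{thm:compl-char}, where the paper reads them off directly from the definition of $\rnv{(\ )}$.
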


A version of this result appeared as part of Theorem~1.8 of
Isbell~\cite{IsbeAS}, with a partial proof.

\begin{proof}
Take $F = J_\A \from \A \to \refl(\A)$ in
Theorem~\ref{thm:sa-eqv}. We have $\rnv{J_\A} = 1_{\refl(\A)}$, which
is certainly small-adequate, so $\refl(J_\A)$ and $\rrnv{J_\A}$ are
pseudo-inverse. But $\rrnv{J_\A} = \rnv{1_{\refl(\A)}} = J_{\refl(\A)}$.
\end{proof}

\begin{cor}
\label{cor:rc-tfae}
The following conditions on a category $\A$ are equivalent:
\begin{enumerate}
\item 
\label{part:rt-emb}
$J_\A \from \A \incl \refl(\A)$ is an equivalence;

\item
\label{part:rt-contra}
every reflexive functor $\A^\op \to \Set$ is representable;

\item
\label{part:rt-co}
every reflexive functor $\A \to \Set$ is representable;

\item
\label{part:rt-rc}
$\A \eqv \refl(\B)$ for some category $\B$.
\end{enumerate}
\end{cor}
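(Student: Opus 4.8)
The plan is to prove $\text{(i)}\Leftrightarrow\text{(ii)}$, $\text{(i)}\Leftrightarrow\text{(iii)}$, $\text{(i)}\Rightarrow\text{(iv)}$ and $\text{(iv)}\Rightarrow\text{(i)}$; only the last implication carries real content. For the equivalence of (i), (ii) and (iii), the key point is that $J_\A$ is full and faithful (Proposition~\ref{propn:incl-adeq}), so it is an equivalence exactly when it is essentially surjective. Viewing $\refl(\A)$ as the full subcategory of $\cocmp{\A}$ on the reflexive functors $\A^\op \to \Set$, the essential image of $J_\A$ consists precisely of the representables, so essential surjectivity says exactly that every reflexive functor $\A^\op \to \Set$ is representable; this gives (i)$\Leftrightarrow$(ii). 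Running the same argument with the description of $\refl(\A)$ as a full subcategory of $\cmp{\A}$ (the two descriptions agree by Remark~\ref{rmk:refl-dual}) gives (i)$\Leftrightarrow$(iii). The implication (i)$\Rightarrow$(iv) is immediate on taking $\B = \A$, since then $\A \eqv \refl(\A) = \refl(\B)$.

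The crux is (iv)$\Rightarrow$(i). Suppose $E \from \A \to \refl(\B)$ is an equivalence; I aim to exhibit the \emph{canonical} functor $J_\A$ as a composite of equivalences. First I would record that every equivalence is small-adequate: a pseudo-inverse $E'$ is a full and faithful right adjoint of $E$, so $E$ is small-dense by Example~\ref{eg:adjt-dense}, and dually (using that $E'$ is also a full and faithful left adjoint) small-codense, while $E$ is certainly full and faithful. Hence $\refl(E)$ is defined, and the commutativity of diagram~\eqref{eq:func} applied to $F = E$ reads
\[
\refl(E) \of J_{\refl(\B)} \of E \iso J_\A.
\]
Here $J_{\refl(\B)}$ is an equivalence by the idempotence result (Corollary~\ref{cor:rc-idem}) and $E$ is an equivalence by hypothesis, so it remains only to see that $\refl(E)$ is an equivalence.

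For that I would apply Theorem~\ref{thm:sa-eqv} to $F = E$: its equivalent conditions reduce here to the small-adequacy of $\rnv{E}$. But the defining triangle $\rnv{E} \of E \iso J_\A$ (Corollary~\ref{cor:rnv}) yields $\rnv{E} \iso J_\A \of E'$, a composite of the small-adequate functors $E'$ (an equivalence) and $J_\A$ (Proposition~\ref{propn:incl-adeq}), hence small-adequate by Lemma~\ref{lemma:adeq-comp}. Therefore $\refl(E)$ is an equivalence, and the displayed isomorphism presents $J_\A$ as a composite of three equivalences, establishing (i). The main obstacle is precisely this last step: idempotence alone yields only an abstract equivalence $\A \eqv \refl(\A)$, whereas (i) demands that $J_\A$ \emph{itself} be an equivalence. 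The leverage for upgrading the former to the latter comes from feeding the small-adequacy of $\rnv{E}$ through Theorem~\ref{thm:sa-eqv} while keeping the coherence isomorphisms of diagram~\eqref{eq:func} under control.
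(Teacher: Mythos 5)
Your proof is correct. Its first half coincides with the paper's own argument: the equivalences (i)$\iff$(ii)$\iff$(iii) come from fullness and faithfulness of $J_\A$ together with the two descriptions of $\refl(\A)$ in diagram~\eqref{eq:inclusions}, and (i)$\implies$(iv) is trivial with $\B = \A$. Where you genuinely diverge is on (iv)$\implies$(i): the paper disposes of this in one line, saying it ``follows from Corollary~\ref{cor:rc-idem}'', thereby leaving implicit exactly the transport step you identify as the crux --- idempotence gives an abstract equivalence, but (i) demands that the canonical functor $J_\A$ itself be an equivalence, which presupposes that reflexive completeness is invariant under equivalence of categories. You supply this missing argument in full: equivalences are small-adequate (Example~\ref{eg:adjt-dense} and its dual, since a pseudo-inverse $E'$ serves as both a full and faithful right and left adjoint), so diagram~\eqref{eq:func} applies with $F = E$ and exhibits $J_\A \iso \refl(E) \of J_{\refl(\B)} \of E$; then $\rnv{E} \iso J_\A \of E'$ is small-adequate by Proposition~\ref{propn:incl-adeq} and Lemma~\ref{lemma:adeq-comp}, so Theorem~\ref{thm:sa-eqv} makes $\refl(E)$ an equivalence, while $J_{\refl(\B)}$ is one by Corollary~\ref{cor:rc-idem}. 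Your route buys an explicit, checkable proof of the equivalence-invariance that the paper's citation tacitly assumes (the alternative would be a direct verification that smallness, conjugacy and hence reflexivity of functors transport along equivalences, which is routine but not free); the paper's version buys only brevity. One pedantic caveat: an arbitrary pseudo-inverse need not satisfy the triangle identities, but every equivalence can be promoted to an adjoint equivalence, which is all your appeal to Example~\ref{eg:adjt-dense} requires, so nothing breaks.
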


\begin{proof}
$J_\A$ is always full and faithful, so it is an equivalence just when it is
essentially surjective on objects. Hence
\bref{part:rt-emb}$\iff$\bref{part:rt-contra}$\iff$\bref{part:rt-co} by
diagram~\eqref{eq:inclusions}. That~\bref{part:rt-rc} is equivalent
to~\bref{part:rt-emb} follows from Corollary~\ref{cor:rc-idem}.
\end{proof}

A category satisfying the equivalent conditions of
Corollary~\ref{cor:rc-tfae} is \demph{reflexively complete}. It is a
self-dual condition: $\A$ is reflexively complete if and only if $\A^\op$
is. 

In the introduction to Section~\ref{sec:char}, we compared reflexive
completion to metric completion, drawing an analogy between
Theorem~\ref{thm:compl-char} and the characterization of the completion of
a metric space $A$ as the largest metric space in which $A$ is
dense. The completion of a metric space $A$ can also be characterized as
the smallest complete metric space containing $A$. However, the reflexive
analogue of that characterization is false:

\begin{example}
Let $F \from \A \to \B$ be a small-adequate functor such that $\rnv{F}$ is
not small-adequate, as in Example~\ref{eg:rnv-not-sa}. Then by
Theorem~\ref{thm:sa-eqv}, the full and faithful functor $\refl(F)
\from \refl(\B) \to \refl(\A)$ is not an equivalence. Its image is a
full subcategory of $\refl(\A)$ that is reflexively complete and contains
$\A$, but is strictly smaller than $\refl(\A)$.
\end{example}

Such examples can be excluded by restricting to categories that are
gentle. There, the pseudofunctor $\refl$ acts somewhat trivially, in the
sense of part~\bref{part:gf-main} of Corollary~\ref{cor:gentle-func} below.

\begin{propn}
\label{propn:gentle-sa}
Let $F \from \A \to \B$ be a small-adequate functor. If $\B$ is gentle then
$\rnv{F}$ is small-adequate.
\end{propn}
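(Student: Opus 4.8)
The plan is to deduce the statement almost immediately from Lemma~\ref{lemma:gentle-comp}, which is designed for precisely this configuration. First I would form the composite
\[
\A \toby{F} \B \toby{\rnv{F}} \refl(\A),
\]
and verify the three hypotheses of Lemma~\ref{lemma:gentle-comp} for it, with $\B$ playing the role of the intermediate (gentle) category. The map $\rnv{F}$ is full and faithful by Corollary~\ref{cor:rnv}, and $\B$ is gentle by assumption, so two of the three hypotheses are in place at once.

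The remaining hypothesis is that the composite $\rnv{F} \of F$ is small-adequate. For this I would invoke Theorem~\ref{thm:compl-char}, whose commuting triangle gives $\rnv{F} \of F \iso J_\A$, together with Proposition~\ref{propn:incl-adeq}, which asserts that the Yoneda embedding $J_\A \from \A \to \refl(\A)$ is small-adequate. With all three hypotheses established, Lemma~\ref{lemma:gentle-comp} yields directly that $\rnv{F}$ is small-adequate, which is exactly the claim.

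The genuine work has already been carried out inside Lemma~\ref{lemma:gentle-comp}, where gentleness of $\B$ is used (via the dual of Lemma~\ref{lemma:sd-colim} to present each object of $\refl(\A)$ as a small limit of objects $\rnv{F}(Fa_i)$, and via completeness of $\cocmp{\B}$ to conclude that the relevant hom-functor is small). Consequently there is no real obstacle at this level; the only point requiring care is matching the roles of the categories, namely that it is the \emph{middle} category $\B$—not $\A$ or $\refl(\A)$—that must be gentle, which is precisely what the hypothesis of the proposition provides.
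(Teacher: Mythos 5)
Your proposal is correct and is exactly the paper's proof: apply Lemma~\ref{lemma:gentle-comp} with $G = \rnv{F}$, using that $\rnv{F} \of F \iso J_\A$ is small-adequate. The paper states this in one line; your verification of the hypotheses fills in the same details.
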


\begin{proof}
Apply Lemma~\ref{lemma:gentle-comp} with $G = \rnv{F}$, recalling that
$\rnv{F} \of F$ is the small-adequate functor $J_\A$.
\end{proof}

\begin{cor}
\label{cor:gentle-func}
Let $\B$ be a gentle category. Then:
\begin{enumerate}
\item
\label{part:gf-main}
for every category $\A$ and small-adequate functor $F \from \A \to \B$, the
functor $\refl(F) \from \refl(\B) \to \refl(\A)$ is an equivalence;

\item
\label{part:gf-sub}
$\refl(\A) \eqv \refl(\B)$ for every small-adequate subcategory $\A \sub \B$.
\end{enumerate}
\end{cor}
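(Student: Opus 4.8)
The plan is to deduce both parts directly from the two immediately preceding results, with essentially no new work. The substantive content—that gentleness of the codomain forces $\rnv{F}$ to be small-adequate—is already isolated in Proposition~\ref{propn:gentle-sa}, and the passage from that smallness to an equivalence is exactly what Theorem~\ref{thm:sa-eqv} provides. So the whole corollary is a matter of chaining these two statements together.

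For part~\bref{part:gf-main}, I would begin with a small-adequate functor $F \from \A \to \B$ where $\B$ is gentle. Proposition~\ref{propn:gentle-sa} then immediately yields that $\rnv{F}$ is small-adequate, which is precisely condition~\bref{part:se-nsa} of Theorem~\ref{thm:sa-eqv}. Invoking the equivalence of the three conditions in that theorem, condition~\bref{part:se-re} must also hold; that is, $\refl(F) \from \refl(\B) \to \refl(\A)$ is an equivalence. For part~\bref{part:gf-sub}, I would specialise to the case where $F$ is the inclusion $\A \incl \B$ of a small-adequate subcategory. Such an inclusion is by definition a small-adequate functor, so part~\bref{part:gf-main} applies and gives the equivalence $\refl(F) \from \refl(\B) \to \refl(\A)$; in particular $\refl(\A) \eqv \refl(\B)$.

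I do not expect any genuine obstacle here: the corollary is bookkeeping that feeds Proposition~\ref{propn:gentle-sa} into Theorem~\ref{thm:sa-eqv}. The only point requiring a moment's care is checking that the hypotheses line up—that a small-adequate subcategory does give a bona fide small-adequate inclusion functor (immediate from Definition~\ref{defn:adeq}), and that gentleness of the codomain $\B$ is exactly the hypothesis demanded by Proposition~\ref{propn:gentle-sa}. Once these are noted, both parts follow mechanically.
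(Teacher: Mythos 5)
Your proposal is correct and coincides with the paper's own proof: part~(i) by feeding Proposition~\ref{propn:gentle-sa} into Theorem~\ref{thm:sa-eqv}, and part~(ii) by specializing to the inclusion of a small-adequate subcategory. No gaps.
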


\begin{proof}
Part~\bref{part:gf-main} follows from Proposition~\ref{propn:gentle-sa} and
Theorem~\ref{thm:sa-eqv}, and part~\bref{part:gf-sub} is then immediate.
\end{proof}

For example, every small-adequate subcategory of a complete and cocomplete
category $\B$ has the same reflexive completion as $\B$, which by
Proposition~\ref{propn:coco-rc} below is $\B$ itself.

\section{Reflexive completion and Cauchy completion}
\label{sec:cauchy}

Some formal resemblances are apparent between the reflexive and Cauchy
completions. Both are idempotent completions; both commute with the
operation of taking opposites; and to the analogy between reflexive
and metric completion mentioned in the introduction to
Section~\ref{sec:char}, one can add the fact that metric completion is
Cauchy completion in the $[0, \infty]$-enriched setting. On the other hand,
the reflexive and Cauchy completions are different, as even the example of
the empty category shows (Example~\ref{eg:compl-empty}). In this section,
we describe the relationship between them.

\begin{propn}
\label{propn:cc-basics}
Let $\A$ be a category.
\begin{enumerate}
\item
\label{part:cb-closed}
In $[\A^\op, \Set]$, the class of reflexive functors is closed under
small absolute colimits.

\item
\label{part:cb-cc}
$\refl(\A)$ is Cauchy complete.

\item
\label{part:cb-contains}
$\ovln{\A} \sub \refl(\A)$, when the Cauchy completion $\ovln{\A}$ and
reflexive completion $\refl(\A)$ are viewed as subcategories of $[\A^\op,
\Set]$.

\item
\label{part:cb-eqv}
$\refl(\ovln{\A}) \eqv \refl(\A)$.
\end{enumerate}
\end{propn}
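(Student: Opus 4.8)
The plan is to prove the four parts in order. Parts~(ii) and~(iii) will follow quickly from~(i), so the real work is in~(i) and~(iv), with the genuine obstacle arising in~(iv).

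For part~(i), the key observation is that $\ddu{(\ )}$ is a composite of two contravariant conjugacy operations, each of which carries colimits to limits. An absolute colimit is preserved by every functor; in particular each conjugacy carries it to a (necessarily absolute) limit, and the composite carries it back to a colimit, so $\ddu{(\ )}$ preserves absolute colimits. The one delicate point is smallness: I must check that if $X$ is an absolute colimit of reflexive functors then $\du{X}$ is small, even though limits do not in general preserve smallness. Here I would use that $\du{(\ )}$ sends the absolute colimit to an absolute limit, and that an absolute limit is itself a colimit; thus $\du{X}$ is a small colimit of the small functors $\du{D_i}$ and hence small. Granting this, if $X \iso \colim_i D_i$ is an absolute colimit of reflexive functors, then both the identity and $\ddu{(\ )}$ preserve the colimit, and naturality of the unit identifies $\eta_X$ with $\colim_i \eta_{D_i}$, an isomorphism since each $D_i$ is reflexive. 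So $X$ is reflexive.

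For parts~(ii) and~(iii), recall that for $\V = \Set$ the Cauchy completion $\ovln{\A}$ is the closure of the representables under split idempotents, and that idempotent splittings are small absolute colimits. Part~(iii) is then immediate: representables are reflexive by Example~\ref{eg:rep-refl}, and reflexivity is closed under these colimits by~(i), so every retract of a representable is reflexive, giving $\ovln{\A} \sub \refl(\A)$. For part~(ii), I would view $\refl(\A)$ as a full subcategory of $\cocmp{\A}$, which has all small colimits; by~(i) it is closed there under small absolute colimits, so in particular every idempotent in $\refl(\A)$ splits, which is exactly Cauchy completeness.

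For part~(iv), by part~(iii) and Corollary~\ref{cor:intermediate-adeq} the inclusion $F \from \A \incl \ovln{\A}$ is small-adequate, and by Remark~\ref{rmk:char-sub} the induced functor $\rnv{F}$ is the inclusion $\ovln{\A} \incl \refl(\A)$. By Theorem~\ref{thm:sa-eqv} it suffices to show this inclusion is small-adequate, for then $\refl(F) \from \refl(\ovln{\A}) \to \refl(\A)$ is an equivalence. It is adequate by Proposition~\ref{propn:adeq-factors}, so the remaining content is representable and corepresentable smallness. This is exactly where the main obstacle lies: Example~\ref{eg:ordinals} shows that adequacy of the upper leg of a factorization does not force small-adequacy in general, so one must exploit the special nature of $\ovln{\A}$. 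The cleanest resolution is to observe that restriction along $\A \incl \ovln{\A}$ is an equivalence $\cocmp{\ovln{\A}} \eqv \cocmp{\A}$, and dually $\cmp{\ovln{\A}} \eqv \cmp{\A}$: every small functor on $\ovln{\A}$ restricts to a small functor on $\A$ because each object of $\ovln{\A}$ is a retract of a representable, and conversely left Kan extension along the dense inclusion inverts this. Since conjugacy and the unit $\eta$ are defined purely in terms of these free (co)completions, reflexive objects correspond under the equivalence, yielding both the required small-adequacy and the conclusion $\refl(\ovln{\A}) \eqv \refl(\A)$.
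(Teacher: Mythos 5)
Your proposal is correct, and for parts~(i)--(iii) it is essentially the paper's argument; the interesting divergences are in the smallness step of~(i) and in the whole of~(iv). In~(i), both you and the paper observe that conjugacy, like any functor, preserves absolute colimits, so that $\eta_X \iso \colim_i \eta_{D_i}$; the delicate point is that $\du{X}$ is small. You settle it by invoking the fact that, for ordinary categories, absolute (co)limits are precisely idempotent splittings, so the absolute limit $\du{X}$ is also a colimit (indeed a retract of some $\du{D_i}$) and hence small. That is valid for $\V = \Set$, the setting of this section, but it is $\Set$-specific (it is Par\'e's characterization, and should be cited as such); the paper instead argues that $\cmp{\A}$ is complete, hence Cauchy complete, hence closed under small absolute colimits in $[\A, \Set]^\op$ --- an argument that transfers verbatim to enriched $\V$, in keeping with the authors' stated aim of choosing generalization-friendly proofs. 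The same remark applies to your use of ``Cauchy complete $=$ idempotents split'' in~(ii) and~(iii), where the paper speaks of closure under absolute colimits. For~(iv), you take exactly the route that the paper records, in the remark \emph{after} its proof, as ``another, more elementary, proof'': restriction along $\A \incl \ovln{\A}$ restricts to equivalences $\cocmp{\ovln{\A}} \eqv \cocmp{\A}$ and $\cmp{\ovln{\A}} \eqv \cmp{\A}$ commuting with conjugacy and the unit, so reflexive functors correspond and $\refl(\ovln{\A}) \eqv \refl(\A)$ directly. The paper's main proof instead keeps Theorem~\ref{thm:sa-eqv} as the engine: it shows the inclusion $\rnv{F} \from \ovln{\A} \incl \refl(\A)$ is representably small, using that left Kan extension along $F$ preserves smallness (so $Z \from \ovln{\A}^\op \to \Set$ is small as soon as $Z|_{\A^\op}$ is) together with the Yoneda identity $\refl(\A)(-, X)|_{\A^\op} \iso X$. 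One stylistic point: having carried out your ``cleanest resolution'', your opening appeal to Theorem~\ref{thm:sa-eqv} becomes redundant --- the compatible equivalence already identifies $\ovln{\A} \incl \refl(\A)$ with the small-adequate $J_{\ovln{\A}}$ of Proposition~\ref{propn:incl-adeq}, so small-adequacy and the equivalence come out together, and the first half of your paragraph on~(iv) could be deleted without loss.
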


\begin{proof}
For~\bref{part:cb-closed}, let $X = \colim_i X_i$ be a small absolute
colimit of reflexive functors $X_i$ in $[\A^\op, \Set]$. Each $X_i$ is
small, so $X$ is small, and $X$ is the absolute colimit of the $X_i$ in
$\cocmp{\A}$. Since $\dubk \from \cocmp{\A} \to [\A, \Set]^\op$ preserves
this colimit, $\du{X} \iso \colim_i X_i^\vee$ in $[\A, \Set]^\op$, again an
absolute colimit. Since each $X_i$ is reflexive, each $X_i^\vee$ is
small. Now $\cmp{\A} \sub [\A, \Set]^\op$ is complete, hence Cauchy
complete, hence closed under small absolute colimits in $[\A, \Set]^\op$.
So $\du{X}$ is small and is the absolute colimit of the $X_i^\vee$ in
$\cmp{\A}$. Since $\dubk \from \cmp{\A} \to [\A^\op, \Set]$ preserves this
colimit, $\ddu{X} \iso \colim_i X_i^{\vee\vee}$. Since each $X_i$ is
reflexive, so is $X$.

This proves~\bref{part:cb-closed}. Colimits in $\refl(\A) \sub
[\A^\op, \Set]$ are computed pointwise, so $\refl(\A)$ has absolute colimits,
proving~\bref{part:cb-cc}. And $\ovln{\A}$ is the closure of $\A \sub
[\A^\op, \Set]$ under absolute colimits, giving~\bref{part:cb-contains}.

For~\bref{part:cb-eqv}, Corollary~\ref{cor:intermediate-adeq}
and~\bref{part:cb-contains} imply that the inclusion $F \from \A \incl
\ovln{\A}$ is small-adequate. We will prove that the induced adequate
functor $\rnv{F} \from \ovln{\A} \to \refl(\A)$ is representably small. It
will follow by duality that $\rnv{F}$ is small-adequate, and so by
Theorem~\ref{thm:sa-eqv} that $\refl(F)\from \refl(\ovln{\A}) \to
\refl(\A)$ is an equivalence. 

First recall that by the 2-universal property of Cauchy completion,
restriction along $F$ is an equivalence
\begin{align}
\label{eq:cc-restr}
[\ovln{\A}^\op, \Set] \toby{\eqv} [\A^\op, \Set].
\end{align}
Its pseudo-inverse is left Kan extension along $F$, and left Kan extension
along any functor preserves smallness, so every $Z \from \ovln{\A}^\op \to
\Set$ such that $Z|_{\A^\op}$ is small is itself small.

To prove that $\rnv{F} \from \ovln{\A} \to \refl(\A)$ is representably
small, we regard $\ovln{\A}$ and $\refl(\A)$ as subcategories of $[\A^\op,
\Set]$ and recall that $\rnv{F}$ is then the inclusion $\ovln{\A} \incl
\refl(\A)$ (Remark~\ref{rmk:char-sub}). For each $X \in \refl(\A)$, the
Yoneda lemma gives
\[
\refl(\A)(-, X)|_{\A^\op} \iso X,
\]
so $\refl(\A)(-, X)|_{\A^\op}$ is small, so
$\refl(\A)(-, X)|_{\ovln{\A}^\op}$ is small, as required.
\end{proof}

\begin{remark}
There is another, more elementary, proof of~\bref{part:cb-eqv}. One shows
that the equivalence~\eqref{eq:cc-restr} restricts to an equivalence
$\cocmp{\ovln{\A}} \toby{\eqv} \cocmp{\A}$, and dually. Then one shows that
the conjugacy operations on $\ovln{\A}$ and $\A$ commute with these
equivalences. It follows that the equivalence~\eqref{eq:cc-restr} also
restricts to an equivalence $\refl(\ovln{\A}) \toby{\eqv} \refl(\A)$.
\end{remark}

Proposition~\ref{propn:cc-basics}\bref{part:cb-eqv} implies:

\begin{cor}
\label{cor:morita}
Morita equivalent categories have equivalent reflexive completions.
\qed
\end{cor}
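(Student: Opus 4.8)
The plan is to reduce the statement to Proposition~\ref{propn:cc-basics}\bref{part:cb-eqv} together with the equivalence-invariance of the reflexive completion. Recall that $\A$ and $\B$ are Morita equivalent precisely when their Cauchy completions are equivalent, $\ovln{\A} \eqv \ovln{\B}$. Given this, I would assemble the chain
\[
\refl(\A) \eqv \refl(\ovln{\A}) \eqv \refl(\ovln{\B}) \eqv \refl(\B),
\]
in which the outer two equivalences are instances of Proposition~\ref{propn:cc-basics}\bref{part:cb-eqv}, and the middle one comes from applying $\refl$ to the equivalence $\ovln{\A} \eqv \ovln{\B}$.

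The one point requiring argument is that an equivalence of categories induces an equivalence of reflexive completions. First I would note that any equivalence $F \from \A \to \B$ is small-adequate: it is full and faithful by definition; it is dense and codense because every object of $\B$ is isomorphic to one in the image of $F$; and it is representably and corepresentably small because $\B(F-, b) \iso \B(F-, Fa) \iso \A(-, a)$ is representable whenever $b \iso Fa$. Thus $\refl(F) \from \refl(\B) \to \refl(\A)$ is defined.

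To see that $\refl(F)$ is an equivalence, I would invoke Theorem~\ref{thm:sa-eqv}, for which it suffices to check that $\rnv{F}$ is small-adequate. Writing $F^{-1}$ for a pseudo-inverse of $F$ (itself an equivalence, hence small-adequate), the functor $J_\A \of F^{-1} \from \B \to \refl(\A)$ is full and faithful and satisfies $(J_\A \of F^{-1}) \of F \iso J_\A$; so by the uniqueness clause of Theorem~\ref{thm:compl-char}, $\rnv{F} \iso J_\A \of F^{-1}$. Since $J_\A$ is small-adequate (Proposition~\ref{propn:incl-adeq}) and $F^{-1}$ is small-adequate, Lemma~\ref{lemma:adeq-comp} shows that $\rnv{F}$ is small-adequate, and Theorem~\ref{thm:sa-eqv} then gives that $\refl(F)$ is an equivalence.

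I expect the equivalence-invariance step to be the main, and really the only, obstacle; once it is in hand, building the displayed chain from two applications of Proposition~\ref{propn:cc-basics}\bref{part:cb-eqv} is immediate.
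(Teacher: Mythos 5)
Your proposal is correct and takes essentially the same approach as the paper: the corollary is stated there as an immediate consequence of Proposition~\ref{propn:cc-basics}\bref{part:cb-eqv}, via exactly your chain $\refl(\A) \eqv \refl(\ovln{\A}) \eqv \refl(\ovln{\B}) \eqv \refl(\B)$. The equivalence-invariance of $\refl$, which the paper leaves implicit, is verified soundly in your argument (an equivalence is small-adequate by Example~\ref{eg:adjt-dense} and its dual, and $\rnv{F} \iso J_\A \of F^{-1}$ by the uniqueness clause of Theorem~\ref{thm:compl-char}, so Theorem~\ref{thm:sa-eqv} applies).
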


Hence the category $\refl(\A)$ is determined by the category $[\A^\op,
\Set]$, without knowledge of $\A$ itself.

Conjugacy (as opposed to reflexivity) also plays a role in the theory
of Cauchy completion. For vector spaces $X$ and $Z$, there is a canonical
linear map 
\begin{align}
\label{eq:lin}
\begin{array}{ccc}
\du{X} \otimes Z        &\to            &\Vect(X , Z)    \\
\xi \otimes z           &\mapsto        &\xi(-)\cdot z,
\end{array}
\end{align}
where $\du{X}$ is the linear dual of $X$. Analogously, for a category $\A$
and $X, Z \in \cocmp{\A}$, there is a canonical map of sets
\[
\kappa_{X, Z} \from \du{X} \pof Z \to \cocmp{\A}(X, Z),
\]
to be defined. Here 
\[
\du{X} \pof Z = \int^a \du{X}(a) \times Z(a),
\]
and the coend exists since $Z$ is small. The map $\kappa_{X, Z}$ can be
defined concretely by specifying a natural family of functions
\[
\du{X}(a) \times Z(a) \to [X(b), Z(b)]
\]
($a, b \in \A$), which are taken to be
\[
(\xi, z) \mapsto \Bigl( x \mapsto \bigl(Z(\xi_b(x))\bigr)(z) \Bigr).
\]
Equivalently, $\du{X} \pof Z$ is the composite profunctor
\[
\xymatrix{
\One 
\ar[r]|-@{|}^Z  &
\A 
\ar[r]|-@{|}^{\du{X}}   &
\One,
}
\]
and the map $\kappa_{X, Z}$ corresponds under the
adjunctions~\eqref{eq:prof-adj} to 
\[
X \pof \du{X} \pof Z 
\ltoby{\epsln_X \pof Z} 
\Hom_\A \pof Z \iso Z,
\]
where $\epsln_X \from X \pof \du{X} \to \Hom_\A$ is the natural
transformation of~\eqref{eq:epsln}.

\begin{propn}
\label{propn:cc-conj}
Let $\As$ be a small category. The following conditions on a functor $X
\from \As^\op \to \Set$ are equivalent:
\begin{enumerate}
\item 
\label{part:ccc-cc}
$X \in \ovln{\A}$, when $\ovln{\A}$ is regarded as a subcategory of
$\cocmp{\As}$;

\item
\label{part:ccc-sp}
$\cocmp{\A}(X, -) \from \cocmp{\A} \to \Set$ preserves small colimits;

\item
\label{part:ccc-ra}
$X \from \One \pfun \A$ has a right adjoint in the bicategory $\Prof$;

\item
\label{part:ccc-rac}
$X \from \One \pfun \A$ has right adjoint $\du{X}$ in $\Prof$, with counit
$\epsln_X$;

\item
\label{part:ccc-hom}
$\kappa_{X, Z}\from \du{X} \pof Z \to \cocmp{\A}(X, Z)$ is a bijection
for all $Z \in \cocmp{\A}$.
\end{enumerate}
\end{propn}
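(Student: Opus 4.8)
The plan is to move $X$ between the three guises of $\cocmp{\A} = [\A^\op,\Set]$ (recall $\A=\As$ is small): as a presheaf, for conditions~\bref{part:ccc-cc} and~\bref{part:ccc-sp}; as an object of the hom-category $\Prof(\One,\A)$, for~\bref{part:ccc-ra} and~\bref{part:ccc-rac}; and through the coend $\du{X}\pof Z = \int^a\du{X}(a)\times Z(a)$, for~\bref{part:ccc-hom}. I will connect the conditions along the chain \bref{part:ccc-cc}$\Leftrightarrow$\bref{part:ccc-sp}$\Leftrightarrow$\bref{part:ccc-hom}$\Leftrightarrow$\bref{part:ccc-rac}$\Leftrightarrow$\bref{part:ccc-ra}. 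Throughout I use the facts recorded before the proposition: $X\pof\du{X} = X\etimes\du{X}$, the candidate counit $\epsln_X\from X\pof\du{X}\to\Hom_\A$ is the $2$-cell of~\eqref{eq:epsln}, and $\du{X}=\rhom{X,\Hom_\A}$ is the right Kan lift of $\Hom_\A$ through $X$ in $\Prof$, with universal $2$-cell $\epsln_X$.

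For \bref{part:ccc-sp}$\Leftrightarrow$\bref{part:ccc-hom}, I would apply $\cocmp{\A}(X,-)$ to the density formula $Z\iso\int^a Z(a)\times\A(-,a)$. Since $\cocmp{\A}(X,\A(-,a))\iso\du{X}(a)$, cocontinuity of $\cocmp{\A}(X,-)$ would give $\cocmp{\A}(X,Z)\iso\int^a Z(a)\times\du{X}(a)=\du{X}\pof Z$ naturally in $Z$, and a direct check that both sides are built from $\epsln_X$ identifies this isomorphism with $\kappa_{X,Z}$. Conversely $\du{X}\pof-$ is cocontinuous (coends commute with colimits, and colimits of presheaves are pointwise), so if each $\kappa_{X,Z}$ is a bijection then $\cocmp{\A}(X,-)$ is cocontinuous.

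For \bref{part:ccc-cc}$\Leftrightarrow$\bref{part:ccc-sp}, recall that $\ovln{\A}$ is the closure of the representables under retracts. Each $\cocmp{\A}(\A(-,a),-)$ is evaluation at $a$, hence cocontinuous, and a retract of a cocontinuous functor is cocontinuous (a retract of the comparison isomorphism for a colimit is again an isomorphism); this gives \bref{part:ccc-cc}$\Rightarrow$\bref{part:ccc-sp}. For the converse, applying the cocontinuous $\cocmp{\A}(X,-)$ to $X\iso\int^a X(a)\times\A(-,a)$ exhibits $\id_X\in\cocmp{\A}(X,X)$ as an element of the coend $\int^a X(a)\times\du{X}(a)$; a representative $(a_0,x_0,\xi_0)$ yields a factorization $X\toby{\xi_0}\A(-,a_0)\toby{x_0}X$ of the identity, so $X$ is a retract of a representable and lies in $\ovln{\A}$.

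The last two equivalences are bicategorical. Condition~\bref{part:ccc-rac} contains~\bref{part:ccc-ra}; conversely a right adjoint of $X$ in $\Prof$, if one exists, is the right Kan lift of $\Hom_\A$ through $X$, which is $\du{X}$ with counit $\epsln_X$, so~\bref{part:ccc-ra}$\Rightarrow$\bref{part:ccc-rac}. Granting the adjunction $X\ladj\du{X}$ with counit $\epsln_X$, the element of $\du{X}\pof Z$ corresponding under the mate bijection to a natural transformation $X\to Z$ is exactly $\kappa_{X,Z}$, by the very description of $\kappa$ via $\epsln_X\pof Z$; this gives~\bref{part:ccc-rac}$\Rightarrow$\bref{part:ccc-hom}. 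The step I expect to need the most care is the reverse,~\bref{part:ccc-hom}$\Rightarrow$\bref{part:ccc-rac}: here the abstract adjunction must be manufactured from the family of bijections $\kappa_{X,Z}$. I would take the unit $\eta$ to be $\kappa_{X,X}^{-1}(\id_X)\in\du{X}\pof X$; one triangle identity then holds by the definition of $\kappa$, and the other follows from naturality of $\kappa_{X,-}$ together with the Yoneda lemma in $\Prof(\One,\A)$. The one subtlety to check is that testing $\kappa$ on presheaves $Z\from\One\pfun\A$ suffices to make $(\du{X},\epsln_X)$ an \emph{absolute} right lift, which holds because any profunctor into $\A$ is computed objectwise from such $Z$.
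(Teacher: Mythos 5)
Your proof is correct, but it is organized along a genuinely different chain than the paper's. The paper treats \bref{part:ccc-cc}$\iff$\bref{part:ccc-sp}$\iff$\bref{part:ccc-ra} as standard, proves \bref{part:ccc-ra}$\implies$\bref{part:ccc-rac} by uniqueness of right Kan lifts (using that $\epsln_X$ exhibits $\du{X}$ as the right Kan lift of $\Hom_{\As}$ through $X$, established at the end of Section~2), and connects \bref{part:ccc-hom} only to \bref{part:ccc-sp}: $\kappa_{X,Z}$ is bijective for representable $Z$ by Yoneda, and since both $\du{X} \pof -$ and $\cocmp{\A}(X,-)$ preserve small colimits, naturality propagates bijectivity to all $Z$; conversely, the isomorphism $\du{X} \pof - \iso \cocmp{\A}(X,-)$ transports cocontinuity. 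Your steps \bref{part:ccc-sp}$\iff$\bref{part:ccc-hom} and \bref{part:ccc-ra}$\iff$\bref{part:ccc-rac} coincide with this in substance. What is new in your route is, first, an explicit proof of \bref{part:ccc-cc}$\iff$\bref{part:ccc-sp} via retracts of representables (legitimate here since $\V = \Set$, where absolute colimits are exactly idempotent splittings, so $\ovln{\A}$ is the retract-closure of the representables; your coend element-chase extracting a splitting of $\id_X$ is sound), and second, the direct construction \bref{part:ccc-hom}$\implies$\bref{part:ccc-rac} with unit $\eta = \kappa_{X,X}^{-1}(\id_X)$, a link the paper never makes; this yields a self-contained proof that does not lean on the standard profunctor characterization of Cauchy completeness. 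The price is the second triangle identity, which you only gesture at. It does go through roughly as you suggest: naturality of $\kappa_{X,-}$ at each $\xi \from X \to \As(-,a)$, applied to $\eta$, together with the identification of $\kappa_{X,\As(-,a)}$ with the Yoneda isomorphism, gives the identity componentwise. But a cleaner argument is already available from the paper: since $\epsln_X$ exhibits $\du{X}$ as a right Kan lift, any $2$-cell $\theta \from \du{X} \to \du{X}$ with $\epsln_X \of (X \pof \theta) = \epsln_X$ equals $\id_{\du{X}}$, and interchange plus your first triangle identity shows the second-triangle composite satisfies exactly this equation. Finally, your closing worry about absoluteness is unnecessary: once the unit and counit satisfy both triangle identities, $X \ladj \du{X}$ is an adjunction in the bicategory $\Prof$ outright, with no separate objectwise verification required.
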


\begin{proof}
The equivalence of~\bref{part:ccc-cc}--\bref{part:ccc-ra} is standard, and 
\bref{part:ccc-rac}$\implies$\bref{part:ccc-ra} is trivial. 

To prove \bref{part:ccc-ra}$\implies$\bref{part:ccc-rac}, suppose that $X$
has a right adjoint $Y$ in $\Prof$, with counit $\beta$. As in any
bicategory, this implies that $\beta$ exhibits $Y$ as the right Kan lift of
$\Hom_\A$ through $X$. (Here we are using a result
more often given in the dual form, involving Kan extensions. See, for
instance, Theorem~X.7.2 of Mac~Lane~\cite{MacLCWM}, which is stated for
$\CAT$ but the proof is valid in any bicategory.) But as shown at the end of
Section~\ref{sec:conj-small}, $\epsln_X$ exhibits $\du{X}$ as the right Kan
lift of $\Hom_\A$ through $X$. Hence $(\du{X}, \epsln_X) \iso (Y, \beta)$
and~\bref{part:ccc-rac} follows.

Now assume~\bref{part:ccc-hom}. The maps $\kappa_{X, Z}$ are natural in
$Z \in \cocmp{\A}$, so 
\[
(\du{X} \pof -) 
\iso 
\cocmp{\A}(X, -)
\from
\cocmp{\A} \to \Set.
\]
But $\du{X} \pof -$ preserves small colimits by the adjointness
relations~\eqref{eq:prof-adj}, giving~\bref{part:ccc-sp}.

Finally, assuming~\bref{part:ccc-sp}, we prove~\bref{part:ccc-hom}. When
$Z$ is representable, the function
\[
\kappa_{X, Z} \from \du{X} \pof Z \to \cocmp{\A}(X, Z) 
\]
is bijective. But every $Z \in \cocmp{\A}$ is a small colimit of
representables, and both $\du{X} \pof -$ and $\cocmp{\A}(X, -)$ preserve
small colimits, so $\kappa_{X, Z}$ is bijective for all $Z$.
\end{proof}

This result can be generalized to enriched categories. When $\A$ is a
one-object $\Ab$-category corresponding to a field, $\ovln{\A}$ is the
category of finite-dimensional vector spaces, the maps $\kappa_{X, Z}$ are
as defined in equation~\eqref{eq:lin}, and we recover the following fact: a
vector space $X$ is finite-dimensional if and only if $\kappa_{X, Z}$ is an
isomorphism for all vector spaces $Z$.

Further results on Isbell conjugacy and Cauchy completion can be found in
Sections~6 and~7 of Kelly and Schmitt~\cite{KeSc}.

\section{Limits in reflexive completions}
\label{sec:limits}

A partially ordered set is complete if and only if it is reflexively
complete (Example~\ref{eg:compl-posets}). In one direction, we show that,
more generally, every complete or cocomplete category is reflexively
complete (Proposition~\ref{propn:coco-rc}). But the converse is another
matter entirely: in general, reflexively complete categories have very few
limits. We identify exactly which ones.

\begin{lemma}
\label{lemma:incl-lims}
Let $\A$ be a category.
\begin{enumerate}
\item 
\label{part:first-lims}
The inclusion $J_\A \from \A \incl \refl(\A)$ 
preserves and reflects both limits and colimits.

\item
\label{part:second-lims}
The inclusion $\refl(\A) \incl [\A^\op, \Set]$ preserves limits and reflects
both limits and colimits.
\end{enumerate}
\end{lemma}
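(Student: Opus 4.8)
The plan is to reduce both parts to two facts established earlier: that a full and faithful dense functor preserves all limits (Lemma~\ref{lemma:dense-cts}), and dually that a full and faithful codense functor preserves all colimits, together with the elementary observation that any full and faithful functor reflects both limits and colimits. The density hypotheses I need will be extracted from the small-adequacy of the Yoneda embedding $J_\A$ (Proposition~\ref{propn:incl-adeq}) and from the factorization lemma (Lemma~\ref{lemma:dense-factors}).

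For part~\bref{part:first-lims}, I would first recall that $J_\A$ is small-adequate by Proposition~\ref{propn:incl-adeq}, hence in particular full, faithful, dense and codense. Preservation of limits is then immediate from Lemma~\ref{lemma:dense-cts}, and preservation of colimits follows by applying the dual of that lemma, using codensity. For reflection, I would invoke the standard fact that a full and faithful functor reflects all limits and colimits: given a cone in $\A$ whose image under $J_\A$ is a limit cone, its universal property in $\A$ is checked by transporting the factorization in $\refl(\A)$ back along the hom-set bijections supplied by fullness and faithfulness (existence of the factorizing map from fullness, uniqueness from faithfulness), and dually for colimits.

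For part~\bref{part:second-lims}, write $G$ for the inclusion $\refl(\A) \incl [\A^\op, \Set]$, which is full and faithful as a full-subcategory inclusion; reflection of both limits and colimits therefore follows from the same standard fact as above. For preservation of limits, the key observation is that the composite $\A \toby{J_\A} \refl(\A) \toby{G} [\A^\op, \Set]$ is the ordinary Yoneda embedding (diagram~\eqref{eq:inclusions}), which is the archetypal dense functor. Since $G$ is full and faithful, Lemma~\ref{lemma:dense-factors}\bref{part:df-d} then gives that $G$ is dense, whereupon Lemma~\ref{lemma:dense-cts} yields that $G$ preserves all limits.

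I do not expect a genuine obstacle here; the only points requiring care are stating the reflection fact cleanly and tracking exactly which variances survive. In particular, $G$ preserves limits but \emph{not} colimits, because $\refl(\A)$ is closed under limits in $[\A^\op, \Set]$ but not under colimits, which is precisely the asymmetry recorded in the statement. I must also be careful to apply Lemma~\ref{lemma:dense-factors}\bref{part:df-d} with $G$, rather than $J_\A$, playing the role of the full and faithful outer functor.
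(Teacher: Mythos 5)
Your proposal is correct and matches the paper's own proof essentially step for step: part~(i) from the small-adequacy of $J_\A$ (Proposition~\ref{propn:incl-adeq}) via Lemma~\ref{lemma:dense-cts} and its dual, part~(ii) by applying Lemma~\ref{lemma:dense-factors}\bref{part:df-d} to the factorization of the Yoneda embedding through $\refl(\A)$ and then Lemma~\ref{lemma:dense-cts}, with reflection in both parts coming from full faithfulness. Your care about which functor plays the full-and-faithful outer role in Lemma~\ref{lemma:dense-factors} is exactly right and is how the paper applies it.
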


The inclusion $\refl(\A) \incl [\A^\op, \Set]$ does not preserve
\emph{co}limits, since $J_\A \from \A \incl \refl(\A)$ preserves colimits
but the Yoneda embedding $\A \incl [\A^\op, \Set]$ does not.

\begin{proof}
The statements on reflection are immediate, since both functors are full
and faithful.  

For~\bref{part:first-lims}, the embedding $J_\A \from \A \incl \refl(\A)$ is
adequate by Proposition~\ref{propn:incl-adeq}, so preserves limits and
colimits by Lemma~\ref{lemma:dense-cts} and its dual.

For~\bref{part:second-lims}, the composite of $\refl(\A) \incl [\A^\op,
\Set]$ with $J_\A \from \A \incl \refl(\A)$ is the Yoneda embedding, which
is dense, so $\refl(\A) \incl [\A^\op, \Set]$ is dense by
Lemma~\ref{lemma:dense-factors}. Since it is also full and faithful, it
preserves limits by Lemma~\ref{lemma:dense-cts}.
\end{proof}

We have already shown that reflexively complete categories are Cauchy
complete, that is, have absolute limits and colimits
(Proposition~\ref{propn:cc-basics}\bref{part:cb-cc}). The next few results
show that the reflexive completion of a \emph{small} category also has
initial and terminal objects, but that the (co)limits just mentioned are
the only ones that generally exist.

For a small category $\A$ and $b \in \A$, write $\Cone(\id, b)$ for the set
of cones from the identity to $b$ (natural transformations from $\id_\A$ to
the constant endofunctor $b$). This defines a functor $\Cone(\id, -) \from
\A \to \Set$.

\begin{lemma}
\label{lemma:cone}
Let $\A$ be a small category. Then $\du{\Cone(\id, -)}$ is the terminal
functor $\A^\op \to \Set$. 
\end{lemma}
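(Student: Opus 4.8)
The plan is to compute the conjugate pointwise and show that each of its values is a one-element set; since a $\Set$-valued functor all of whose values are singletons is the terminal functor, this suffices. Because $\A$ is small, the conjugate of the covariant functor $\Cone(\id, -)$ is given by the formula for small categories, so for each $a \in \A$
\[
\du{\Cone(\id, -)}(a) = [\A, \Set]\bigl(\Cone(\id, -), \A(a, -)\bigr).
\]
An element of this set is a natural family of functions $\theta_b \from \Cone(\id, b) \to \A(a, b)$, where a point of $\Cone(\id, b)$ is a cone $\alpha = (\alpha_c \from c \to b)_{c \in \A}$, that is, a family with $\alpha_{c'} \of h = \alpha_c$ for every $h \from c \to c'$. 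For existence I would take $\theta_b$ to be evaluation at $a$, sending a cone $\alpha$ to its component $\alpha_a \from a \to b$; naturality in $b$ is the identity $(g \cdot \alpha)_a = g \of \alpha_a$, where $g \cdot \alpha$ is the cone with components $g \of \alpha_c$. Thus $\du{\Cone(\id, -)}(a)$ is always nonempty.

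The real content is uniqueness: I must show that an arbitrary natural $\theta$ coincides with evaluation at $a$, i.e.\ that $\theta_b(\alpha) = \alpha_a$ for every cone $\alpha \in \Cone(\id, b)$. Writing $\psi = \theta_b(\alpha) \from a \to b$, two applications of naturality pin $\psi$ down. First, the cone condition for $\alpha$ applied to the morphism $\psi \from a \to b$ itself gives $\alpha_b \of \psi = \alpha_a$. Second, the component $\alpha_b \from b \to b$ is idempotent and satisfies $\alpha_b \of \alpha_c = \alpha_c$ for all $c$ (the cone condition applied to $\alpha_c \from c \to b$); hence $\alpha_b \cdot \alpha = \alpha$, and naturality of $\theta$ at the endomorphism $\alpha_b$ yields $\psi = \alpha_b \of \psi$. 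Combining the two equations gives $\psi = \alpha_b \of \psi = \alpha_a$, as required. Therefore each value of the conjugate is a singleton, and $\du{\Cone(\id, -)}$ is the terminal functor.

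The step I expect to be the obstacle is precisely this uniqueness. One is tempted to argue via Yoneda, but in general $\A$ has no terminal object and $\Cone(\id, -)$ need not be representable — for instance over a one-object category whose monoid has several left zeros, where $\Cone(\id, -)$ is the set of left zeros and is strictly smaller than the representable. So representability cannot be assumed, and the argument must instead play the cone condition on $\alpha$ against the naturality of $\theta$ at the idempotent $\alpha_b$. Once that interplay is set up, the two identities $\alpha_b \of \psi = \alpha_a$ and $\psi = \alpha_b \of \psi$ close the gap immediately, and the remaining verifications (the naturality of evaluation at $a$, and that a functor valued in singletons is terminal) are routine.
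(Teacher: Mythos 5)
Your proof is correct and follows essentially the same route as the paper's: both establish existence via evaluation at $a$, and prove uniqueness by playing the cone identity $\alpha_b \of \psi = \alpha_a$ against naturality at the idempotent component $\alpha_b$ (using $\alpha_b \of \alpha_c = \alpha_c$, so that $\alpha_b \cdot \alpha = \alpha$) to get $\psi = \alpha_b \of \psi = \alpha_a$. Your side remark that $\Cone(\id, -)$ need not be representable, so a Yoneda shortcut is unavailable, is a correct observation but not needed for the argument.
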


\begin{proof}
Fixing $a \in \A$, we must show that there is exactly one natural
transformation $\alpha \from \Cone(\id, -) \to \A(a, -)$.
There is at least one, since we can define $\alpha_b(p) = p_a$ for each $b
\in \A$ and $p \in \Cone(\id, b)$.

To prove uniqueness, let $\beta \from \Cone(\id, -) \to \A(a, -)$. Let $b
\in \B$ and $p \in \Cone(\id, b)$. We must prove that $\beta_b(p) =
p_a$. 

Since $p$ is a cone, $p_b \of p_c = p_c$ for all $c \in \A$. So we
have an equality of cones $p_b \of p = p$, and then naturality of $\beta$
gives $p_b \of \beta_b(p) = \beta_b(p)$. On the other hand, $p_b \of
\beta_b(p) = p_a$ since $p$ is a cone. Hence $\beta_b(p) = p_a$, as required.
\end{proof}

\begin{propn}
\label{propn:init-term}
The reflexive completion of a small category has initial and terminal
objects. 
\end{propn}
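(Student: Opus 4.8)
The plan is to produce the terminal object directly as the terminal functor and to obtain the initial object by duality. Throughout I work inside $\cocmp{\A} = [\A^\op, \Set]$ (legitimate since $\A$ is small), where $\refl(\A)$ is realized as the full subcategory of reflexive functors. I claim that the terminal functor $1 \from \A^\op \to \Set$ is reflexive; granting this, $1 \in \refl(\A)$, and since $1$ is terminal in $[\A^\op, \Set]$ and $\refl(\A)$ is a full subcategory, $1$ is terminal in $\refl(\A)$. So everything reduces to checking that $1$ is reflexive.

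To verify reflexivity I first compute the conjugate $\du{1} \from \A \to \Set$. Unwinding the definition, $\du{1}(a) = \cocmp{\A}(1, \A(-, a))$, and a natural transformation from the terminal functor $1$ to $\A(-, a)$ is precisely a cone from $\id_\A$ to $a$; hence $\du{1} \iso \Cone(\id, -)$ naturally in $a$. This is exactly the functor treated in Lemma~\ref{lemma:cone}, which therefore gives $\ddu{1} = \du{(\du{1})} \iso \du{\Cone(\id, -)} \iso 1$. With $\ddu{1} \iso 1$ in hand, reflexivity is automatic: for each $a \in \A$ the component $\eta_{1, a} \from 1(a) \to \ddu{1}(a)$ is a function between one-element sets (as $\ddu{1}(a) \iso 1(a) = \{\ast\}$), hence a bijection, so the unit $\eta_1$ is an isomorphism. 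Thus $1$ is reflexive, as required.

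For the initial object I appeal to duality. Since $\A^\op$ is again small, the argument just given shows that $\refl(\A^\op)$ has a terminal object. By Remark~\ref{rmk:refl-dual}, $\refl(\A^\op) \eqv \refl(\A)^\op$, so $\refl(\A)^\op$ has a terminal object and hence $\refl(\A)$ has an initial object. Explicitly, this initial object is the terminal functor $\A \to \Set$, regarded as an object of $\refl(\A) \sub \cmp{\A} = [\A, \Set]^\op$, where a terminal functor of $[\A, \Set]$ becomes an initial object.

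The single step that carries all the weight is the identification $\du{1} \iso \Cone(\id, -)$, since it is exactly what allows the freshly proved Lemma~\ref{lemma:cone} to supply $\ddu{1} \iso 1$. Everything afterwards is formal; in particular, once $\ddu{1} \iso 1$ is known, reflexivity of $1$ needs no genuine argument, because $1$ is singleton-valued and so the unit is forced to be a componentwise bijection.
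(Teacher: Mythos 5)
Your proof is correct and follows essentially the same route as the paper's: identify $\du{1} \iso \Cone(\id, -)$, invoke Lemma~\ref{lemma:cone} to get $\ddu{1} \iso 1$, and dualize via Remark~\ref{rmk:refl-dual} for the initial object. Your extra observation that the \emph{canonical} unit $\eta_1$ is automatically an isomorphism because $\ddu{1}$ is singleton-valued is a small point the paper leaves implicit, and it is a worthwhile check since reflexivity demands the canonical map, not merely some isomorphism $\ddu{1} \iso 1$.
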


\begin{proof}
Let $\A$ be a small category. Write $1$ for the terminal functor
$\A^\op \to \Set$. Then $\du{1} \iso \Cone(\id, -)$, so $\ddu{1} \iso 1$ by
Lemma~\ref{lemma:cone}. Hence $1$ is reflexive. It follows that $1$ is a
terminal object of $\refl(\A) \sub [\A^\op, \Set]$. By duality
(Remark~\ref{rmk:refl-dual}), $\refl(\A)$ also has an initial object.
\end{proof}

\begin{remark}
When $\refl(\A)$ is viewed as a subcategory of $[\A^\op, \Set]$, its
initial object is not in general the initial (empty) functor $\A^\op \to
\Set$. The case $\A = \One$ already shows this
(Example~\ref{eg:compl-one}).
\end{remark}

Proposition~\ref{propn:init-term} is false for large
categories. The proof fails because the terminal functor need not be
small. Any large discrete category $\A$ is a counterexample to the
statement, since then $\refl(\A) \eqv \A$
(Example~\ref{eg:compl-large-discrete}).

To show that reflexive completions have no other (co)limits in general, we
use the following lemma. A category $\cat{I}$ is an \demph{absolute limit
shape} if all $\cat{I}$-limits are absolute.

\begin{lemma}
\label{lemma:abs-lim-cone}
A small category is an absolute limit shape if and only if it admits a cone
on the identity.
\end{lemma}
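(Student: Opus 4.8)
The plan is to prove the two implications separately, reading \emph{a cone on the identity} as an object $x \in \cat{I}$ together with a cone $\Delta_x \Rightarrow \id_{\cat{I}}$, i.e.\ maps $\lambda_i \colon x \to i$ natural in $i$ (so that $f \circ \lambda_i = \lambda_j$ for every $f \colon i \to j$). This is the variance forced by the limit case: the conical $\cat{I}$-limit is weighted by the terminal weight $\Delta 1 \in [\cat{I}, \Set]$, and such a weight yields absolute limits exactly when it is a retract of a representable $\cat{I}(x, -)$, a section of which is just such a cone. (By contrast, a cone of the dual shape, such as a terminal object of $\cat{I}$, does \emph{not} force absoluteness, as the pullback shape shows.)

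For the easy direction (a cone gives absoluteness), suppose we are given $\lambda \colon \Delta_x \Rightarrow \id_{\cat{I}}$, and let $D \colon \cat{I} \to \cat{C}$ be any diagram with a limit $L$ and projections $\pi_i \colon L \to Di$. I would realize $L$ as a retract of the single object $Dx$ by equations that survive any functor. Put $u = \pi_x$ and let $v \colon Dx \to L$ be induced by the cone $(D\lambda_i)_i$ on $D$; then $vu = \id_L$ follows from the limit relation $\pi_i = D\lambda_i \circ \pi_x$. For an arbitrary functor $F$ I would then check directly that $(F\pi_i)_i$ is a limit cone for $FD$: any cone $(g_i \colon c \to FDi)$ satisfies $g_i = FD\lambda_i \circ g_x$, hence factors as $g_i = F\pi_i \circ (Fv \circ g_x)$, and the factorization is forced by taking $i = x$. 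So $F$ preserves the limit, and $L$ is absolute.

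For the converse (absoluteness gives a cone), which I expect to be the main obstacle, assume every $\cat{I}$-limit is absolute. The empty case is handled separately: the empty category admits no cone for want of an object, and is not an absolute limit shape since terminal objects are not preserved by all functors. For nonempty $\cat{I}$, consider the Yoneda embedding $Y \colon \cat{I} \to [\cat{I}^\op, \Set]$ and the limit $L = \lim_i Yi$, which exists by completeness of the presheaf category and is given pointwise by $L(a) = \lim_i \cat{I}(a, i)$, precisely the set of cones $\Delta_a \Rightarrow \id_{\cat{I}}$. By hypothesis $L$ is absolute, so it is preserved by the colimit functor $\colim \colon [\cat{I}^\op, \Set] \to \Set$. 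Since $\colim Yi = \colim_{\cat{I}^\op} \cat{I}(-, i) \cong 1$ (the category of elements has an initial object $(i, \id_i)$, hence is connected), I get $\colim_a L(a) \cong \lim_i 1 \cong 1$. This is nonempty, so $L(a) \neq \emptyset$ for some $a$, which is exactly a cone $\Delta_a \Rightarrow \id_{\cat{I}}$.

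The crux is the converse, where the work lies in choosing a functor that must preserve the absolute limit yet also detects a cone on the identity; the colimit functor on presheaves does precisely this, turning preservation of $\lim Y$ into nonemptiness of $\colim_a L(a)$. The routine verifications I would fill in are the pointwise formula for $L$, the evaluation $\colim_{\cat{I}^\op} \cat{I}(-, i) \cong 1$, and the fact that ``absolute'' grants preservation by $\colim$.
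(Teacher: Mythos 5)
Your proof is correct and takes essentially the same route as the paper's: the forward direction splits the limit off the vertex object $Dx$ via the cone and verifies directly that any functor preserves it, and the converse applies absoluteness to the limit of the Yoneda embedding (the presheaf $\Cone(-,\id_{\scat{I}})$ of cones on the identity), preserved by the colimit functor $\colim \from \cocmp{\scat{I}} \to \Set$, exactly as in the paper's sketch. The only differences are cosmetic: you fill in the details the paper omits in its sketched converse, and you treat the empty case separately where the paper's argument covers it implicitly.
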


This result is at least implicit in the literature on Cauchy completeness,
but since we have been unable to find it stated explicitly, we include a
proof.

\begin{proof}
Let $\scat{I}$ be a small category admitting a cone
\[
\bigl( k \toby{u_i} i \bigr)_{i \in \scat{I}}
\]
on the identity. Take functors $\scat{I} \toby{D} \A \toby{F} \B$ and a
limit cone
\begin{align}
\label{eq:alc-lim}
\bigl( L \toby{p_i} Di \bigr)_{i \in \scat{I}}
\end{align}
on $D$. The cone $(Dk \toby{Du_i} Di)$ induces a map $Dk \toby{f} L$ in
$\A$. Then $f \of p_k = 1_L$. To show that $(FL \toby{Fp_i} FDi)$ is
a limit cone on $FD$, let
\[
\bigl( B \toby{r_i} FDi \bigr)_{i \in \scat{I}}
\]
be any other cone on $FD$. We have a map $B \toby{r_k} FDk \toby{Ff} FL$,
and it is routine to check that this is the unique map of cones from
$(r_i)$ to $(Fp_i)$. Hence the limit cone~\eqref{eq:alc-lim} is absolute.

We sketch the proof of the converse, which will not be needed here.
Suppose that $\scat{I}$-limits are absolute. The limit of the Yoneda
embedding $\scat{I} \incl \cocmp{\scat{I}}$ is $\Cone(-, \id_{\scat{I}})$. By
absoluteness, this limit is preserved by $\colim_\scat{I} \from
\cocmp{\scat{I}} \to \Set$. It follows that $\colim_{\scat{I}} (\Cone(-,
\id_{\scat{I}})) = 1$, so there exists a cone on $\id_{\scat{I}}$.
\end{proof}

\begin{thm}
\label{thm:rcn-lim}
Let $\scat{I}$ be a small category. The following are equivalent:
\begin{enumerate}
\item 
\label{part:rnl-rc}
$\scat{I}$-limits exist in the reflexive completion of every small category;

\item
\label{part:rnl-cc}
$\scat{I}$-limits exist in every Cauchy complete category with a terminal
object; 

\item
\label{part:rnl-exp}
$\scat{I}$ is empty or an absolute limit shape.
\end{enumerate}
\end{thm}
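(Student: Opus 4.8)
The plan is to establish the cycle \bref{part:rnl-exp}$\implies$\bref{part:rnl-cc}$\implies$\bref{part:rnl-rc}$\implies$\bref{part:rnl-exp}, with the first two implications formal and the last carrying all the content. For \bref{part:rnl-exp}$\implies$\bref{part:rnl-cc}: if $\scat{I}=\emptyset$ then an $\scat{I}$-limit is a terminal object, which exists by hypothesis, while if $\scat{I}$ is an absolute limit shape then every $\scat{I}$-limit is absolute, and a Cauchy complete category admits all absolute limits. For \bref{part:rnl-cc}$\implies$\bref{part:rnl-rc}: the reflexive completion of a small category is Cauchy complete by Proposition~\ref{propn:cc-basics}\bref{part:cb-cc} and has a terminal object by Proposition~\ref{propn:init-term}, so \bref{part:rnl-cc} applies to it directly.

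The real work is \bref{part:rnl-rc}$\implies$\bref{part:rnl-exp}, which I would prove contrapositively. By Lemma~\ref{lemma:abs-lim-cone} a nonempty $\scat{I}$ fails to be an absolute limit shape exactly when it admits no cone on its identity, so I assume $\scat{I}$ nonempty with no such cone and seek a small $\A$ whose reflexive completion lacks some $\scat{I}$-limit. The enabling observation is that, by Lemma~\ref{lemma:incl-lims}\bref{part:second-lims}, the inclusion $\refl(\A)\incl[\A^\op,\Set]$ preserves and reflects limits; hence a diagram in $\refl(\A)$ has a limit there if and only if its pointwise limit in $[\A^\op,\Set]$ is reflexive. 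It therefore suffices to exhibit a small $\A$ and an $\scat{I}$-diagram of reflexive functors whose pointwise limit is not reflexive.

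If $\scat{I}$ is disconnected, I would take $\A=C_3$ and the constant diagram at the representable $G_r\in\refl(C_3)$. Its pointwise limit is the power $G_r^{\pi_0(\scat{I})}$; since $\pi_0(\scat{I})$ has at least two elements, this is a free $C_3$-set with at least three orbits, which is not reflexive by Example~\ref{eg:compl-groups}. So $\refl(C_3)$ lacks this limit, and this settles every disconnected shape at once.

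The connected case is the main obstacle. There $\pi_0(\scat{I})$ is a single point, so the previous construction only returns $G_r$; worse, over any group a connected $\scat{I}$-diagram of representables has as limit a torsor or $\emptyset$, both of which are reflexive, so no group will serve. The phenomenon to be captured is transparent for condition~\bref{part:rnl-cc}: the category of nonempty sets is Cauchy complete with a terminal object, yet the diagram $i\mapsto\coprod_{j}\scat{I}(j,i)$ has nonempty values but empty limit, because a cone on it with singleton vertex is precisely a cone on $\id_{\scat{I}}$ and there are none. For condition~\bref{part:rnl-rc} one must instead realise such ``reflexive objects with non-reflexive limit'' inside the reflexive completion of a genuinely \emph{small} category, and since groups cannot do this the witness must have a non-group endomorphism monoid. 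The natural candidate is a small category whose reflexive completion is large, such as Isbell's seven-element monoid (Example~\ref{eg:char-7}): the aim would be to produce a parallel pair---more generally an $\scat{I}$-shaped diagram---of representables whose pointwise equalizer is a non-reflexive right ideal. Verifying this non-reflexivity, and carrying out the corresponding construction for an arbitrary connected $\scat{I}$, is where I expect the difficulty to concentrate.
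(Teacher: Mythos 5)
Your reductions are sound: the two formal implications are exactly as in the paper, and your enabling observation---that since the inclusion $\refl(\A) \incl [\A^\op, \Set]$ preserves limits (Lemma~\ref{lemma:incl-lims}), a diagram of reflexive functors has a limit in $\refl(\A)$ precisely when its pointwise limit in $[\A^\op, \Set]$ is reflexive---is also the paper's strategy for \bref{part:rnl-rc}$\implies$\bref{part:rnl-exp}. Your disconnected-case argument via $C_3$ is correct as far as it goes (the constant diagram at $G_r$ has pointwise limit $G_r^{\pi_0 \scat{I}}$, a free $C_3$-set with more than one orbit, which Example~\ref{eg:compl-groups} shows is not reflexive). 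But the connected case, which you explicitly leave open, is the entire content of the theorem, so this is a genuine gap; and your proposed route via Isbell's seven-element monoid is speculative---you produce neither the diagram nor the non-reflexivity verification, and there is no indication that a single fixed monoid could witness every connected shape.

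The paper closes the gap with one construction that needs no case split on connectedness and manufactures the witness from $\scat{I}$ itself rather than searching among known categories. Adjoin to $\scat{I}$ a new object $z$ and maps $p_i^0, p_i^1 \from z \to i$ for each $i$, subject to $u \of p_i^\epsln = p_j^\epsln$ for every $u \from i \to j$, so that $z$ carries two parallel cones on the inclusion of $\scat{I}$ into the new category $\scat{J}$. Take the $\scat{I}$-diagram of representables $i \mapsto \scat{J}(-, i)$ in $\refl(\scat{J})$. Its pointwise limit $L$ satisfies $L(i) = \Cone(i, \id_{\scat{I}}) = \emptyset$ (this is where the no-cone-on-the-identity hypothesis from Lemma~\ref{lemma:abs-lim-cone} enters) and $L(z) \iso \{0,1\}^{\pi_0 \scat{I}} = S$ with $|S| \geq 2$, whence $L \iso S \times \scat{J}(-, z)$ and $\du{L} \iso \scat{J}(z, -)^S$. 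If $L$ were reflexive, every natural transformation $\scat{J}(z,-)^S \to \scat{J}(z,-)$ would be a projection; but the ``pointwise minimum'' transformation $\alpha$ with $\alpha_i\bigl((p_i^{\epsln_s})_{s \in S}\bigr) = p_i^{\min_s \epsln_s}$ is natural (naturality uses exactly the relations $u \of p_i^\epsln = p_j^\epsln$) and, tested on a Kronecker-delta family using $|S| \geq 2$, agrees with no projection. So $L$ is not reflexive and $\refl(\scat{J})$ lacks the $\scat{I}$-limit. Note that the two parallel families $p^0, p^1$ are precisely what your group-based examples cannot supply: they create a non-projection operation on a power of a representable, which is the source of the non-reflexivity. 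Your diagnosis that a group cannot serve and that the witness needs non-invertible structure was accurate; the resolution is to adjoin that structure freely to $\scat{I}$, not to look for it in a pre-existing monoid.
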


By Remark~\ref{rmk:refl-dual}, dual results hold for colimits.

\begin{proof}
Every Cauchy complete category has small absolute limits,
so~\bref{part:rnl-exp}$\implies$\bref{part:rnl-cc}. Every reflexive
completion of a small category is Cauchy complete with a terminal object
(Propositions~\ref{propn:cc-basics}\bref{part:cb-cc}
and~\ref{propn:init-term}), so
\bref{part:rnl-cc}$\implies$\bref{part:rnl-rc}. It remains to prove
\bref{part:rnl-rc}$\implies$\bref{part:rnl-exp}, which we do by
contradiction.

Assume~\bref{part:rnl-rc}, and that $\scat{I}$ is neither empty nor an
absolute limit shape. By Lemma~\ref{lemma:abs-lim-cone}, $\scat{I}$ admits
no cone on the identity.

Let $\scat{J}$ be the category obtained from $\scat{I}$ by adjoining a new
object $z$ and maps $p_i^0, p_i^1 \from z \to i$ for each $i \in \scat{I}$,
subject to $u \of p_i^\epsln = p_j^\epsln$ for each map $i \toby{u} j$ in
$\scat{I}$ and $\epsln \in \{0, 1\}$. By assumption, the composite
\[
\scat{I} \incl \scat{J} \incl \refl(\scat{J})
\]
has a limit, $L$. Since the inclusion $\refl(\scat{J}) \incl
[\scat{J}^\op, \Set]$ preserves limits (Lemma~\ref{lemma:incl-lims}), $L$ is
the limit of the composite 
\[
\scat{I} \incl \scat{J} \incl [\scat{J}^\op, \Set],
\]
and is reflexive. Now for $i \in \scat{I}$, 
\[
L(i) 
= 
\lim_{i' \in \scat{I}} \scat{J}(i, i') 
= 
\Cone(i, \id_{\scat{I}}) 
=
\emptyset, 
\]
and 
\[
L(z) 
= 
\lim_{i' \in \scat{I}} \scat{J}(z, i') 
\iso 
\{0, 1\}^{\pi_0 \scat{I}},
\]
where $\pi_0 \scat{I}$ is the set of connected-components of
$\scat{I}$. Write $S = L(z)$. Since $\scat{I}$ is nonempty, $|S| \geq
2$. Then
\[
L \iso S \times \scat{J}(-, z),
\]
so 
\[
\du{L} \iso \scat{J}(z, -)^S.
\]
Since $L$ is reflexive, the unit map $\eta_{L, z} \from L(z) \to
\ddu{L}(z)$ is surjective. That is, every natural transformation 
\begin{align}
\label{eq:rcnl-transf}
\alpha \from \scat{J}(z, -)^S \to \scat{J}(z, -)
\end{align}
is the $s$-projection for some $s \in S$. 

We will derive a contradiction by constructing a
transformation~\eqref{eq:rcnl-transf} not of this form. For $i \in
\scat{I}$, let $\alpha_i$ be the function
\[
\begin{array}{cccc}
\alpha_i\from   &\scat{J}(z, i)^S       &\to    &\scat{J}(z, i) \\[1ex]
        &
\bigl( p_i^{\epsln_s} \bigr)_{s \in S}  &
\mapsto &
p_i^{\min_s \epsln_s},
\end{array}
\]
and let $\alpha_z$ be the unique function $\scat{J}(z, z)^S \to
\scat{J}(z, z)$. It is routine to check that $\alpha$ defines a natural
transformation~\eqref{eq:rcnl-transf}. Hence $\alpha$ is $t$-projection for
some $t \in S$. 

Choose some $i \in \scat{I}$, as we may since $\scat{I}$ is
nonempty. Writing $\delta$ for the Kronecker delta and recalling that
$|S| \geq 2$,
\[
\alpha_i \bigl( \bigl(p_i^{\delta_{st}}\bigr)_{s \in S} \bigr)
=
p_i^{\min_s \delta_{st}}
=
p_i^0.
\]
But the $t$-projection of $\bigl(p_i^{\delta_{st}}\bigr)_{s \in S}$ is
$p_i^{\delta_{tt}} = p_i^1$, a contradiction.
\end{proof}

\begin{remark}
Theorem~\ref{thm:rcn-lim} might suggest the idea that reflexive completion is
Cauchy completion followed by the adjoining of initial and terminal
objects, and there are examples in Section~\ref{sec:eg-compl} where
this is indeed the case. But the group of order $2$
(Example~\ref{eg:compl-groups}) shows that this is false in general.
\end{remark}

Theorem~\ref{thm:rcn-lim} concerns limits in reflexive completions of small
categories. Every reflexively complete category is the reflexive completion
of some category (Corollary~\ref{cor:rc-tfae}), but not always of a
\emph{small} category. For example, any large discrete category is
reflexively complete (Example~\ref{eg:compl-large-discrete}), but does not
have a terminal object and so cannot be the reflexive completion of a small
category. The following corollary is the analogue of
Theorem~\ref{thm:rcn-lim} for the larger class of reflexively complete
categories.

\begin{cor}
\label{cor:rce-lim}
Let $\scat{I}$ be a small category. The following are equivalent:
\begin{enumerate}
\item 
\label{part:rel-rc}
$\scat{I}$-limits exist in every reflexively complete category;

\item
\label{part:rel-cc}
$\scat{I}$-limits exist in every Cauchy complete category; 

\item
\label{part:rel-exp}
$\scat{I}$ is an absolute limit shape.
\end{enumerate}
\end{cor}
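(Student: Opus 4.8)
The plan is to establish the cycle $\bref{part:rel-exp}\implies\bref{part:rel-cc}\implies\bref{part:rel-rc}\implies\bref{part:rel-exp}$, treating this as a variant of Theorem~\ref{thm:rcn-lim} and reducing to it wherever possible. Two of the three implications are immediate. For $\bref{part:rel-exp}\implies\bref{part:rel-cc}$: if $\scat{I}$ is an absolute limit shape then every $\scat{I}$-limit is absolute, and every Cauchy complete category has all small absolute limits by definition. For $\bref{part:rel-cc}\implies\bref{part:rel-rc}$: every reflexively complete category is Cauchy complete by Proposition~\ref{propn:cc-basics}\bref{part:cb-cc}, so the existence of $\scat{I}$-limits in all Cauchy complete categories forces their existence in all reflexively complete ones.

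The substantive direction is $\bref{part:rel-rc}\implies\bref{part:rel-exp}$, and here the idea is to piggyback on Theorem~\ref{thm:rcn-lim}. Since every reflexive completion of a small category is reflexively complete (Corollary~\ref{cor:rc-tfae}), condition~\bref{part:rel-rc} implies condition~\bref{part:rnl-rc} of that theorem. The theorem then yields that $\scat{I}$ is empty or an absolute limit shape. It therefore remains only to rule out the empty case, and this is exactly the discrepancy between the two statements: Theorem~\ref{thm:rcn-lim} concerned categories \emph{with a terminal object}, for which the empty limit is automatic, whereas a general reflexively complete category need not have one.

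To exclude the empty case, the plan is to exhibit a reflexively complete category with no terminal object. A large discrete category $\A$ serves: it is reflexively complete by Example~\ref{eg:compl-large-discrete}, yet having a proper class of objects and only identity arrows it has no terminal object, hence no limit of the empty diagram. Under~\bref{part:rel-rc} this is impossible when $\scat{I}$ is empty, so $\scat{I}$ must be nonempty, and is therefore an absolute limit shape. This is consistent with Lemma~\ref{lemma:abs-lim-cone}: the empty category admits no cone on its identity, for want of an apex, so it genuinely fails to be an absolute limit shape and is rightly excluded from~\bref{part:rel-exp}.

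I expect the only real obstacle to be conceptual rather than computational: recognizing that the sole difference from Theorem~\ref{thm:rcn-lim} lies in the treatment of the empty diagram, and locating the correct witness---a large discrete category---to separate the two statements. Once that is in place, every remaining step is bookkeeping with results already established.
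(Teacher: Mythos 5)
Your proof is correct and is essentially identical to the paper's: the same two easy implications, the same reduction of \bref{part:rel-rc}$\implies$\bref{part:rel-exp} to Theorem~\ref{thm:rcn-lim}, and the same witness---a large discrete category, reflexively complete but lacking a terminal object---to exclude the empty shape. Your closing sanity check via Lemma~\ref{lemma:abs-lim-cone} (the empty category admits no cone on its identity) is a nice touch but not needed.
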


\begin{proof}
Certainly~\bref{part:rel-exp}$\implies$\bref{part:rel-cc},
and~\bref{part:rel-cc}$\implies$\bref{part:rel-rc} because reflexively
complete categories are Cauchy complete.  Assuming~\bref{part:rel-rc},
$\scat{I}$ is empty or an absolute limit shape by
Theorem~\ref{thm:rcn-lim}. But any large discrete category is a reflexively
complete category with no terminal object, so $\scat{I}$ is not empty,
proving~\bref{part:rel-exp}.
\end{proof}

Every $\Set$-valued functor on a small category can be expressed as a small
colimit of representables. Not every such functor can be expressed as a
small \emph{limit} of representables, since then it would preserve small
limits.

\begin{lemma}
\label{lemma:lims-reps}
Let $\A$ be a category.  A functor $\A^\op \to \Set$ is a small limit of
representables if and only if it is the conjugate of some small functor $\A
\to \Set$.  
\end{lemma}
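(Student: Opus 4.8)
The plan is to derive both implications from a single computation: for any small diagram $D \from \scat{I} \to \A$, $i \mapsto a_i$, writing $\A(-, a_i) \from \A^\op \to \Set$ and $\A(a_i, -) \from \A \to \Set$ for the two associated families of representables, I will show that
\[
\du{\bigl(\colim_i \A(a_i, -)\bigr)} \iso \lim_i \A(-, a_i),
\]
with the colimit formed in $[\A, \Set]$ and the limit in $[\A^\op, \Set]$. Granting this, both directions are immediate. For the `if' direction, if $X \iso \du{Y}$ with $Y$ small, then by the definition of smallness $Y$ is a small colimit of representables $\colim_i \A(a_i, -)$, whence $X \iso \lim_i \A(-, a_i)$ is a small limit of representables. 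For the `only if' direction, if $X \iso \lim_i \A(-, a_i)$, I set $Y = \colim_i \A(a_i, -)$; this is a small colimit of representables, hence small, and the displayed isomorphism gives $\du{Y} \iso X$.

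The computation itself is a matter of unwinding the definition of the conjugate. For $a \in \A$, the set $\du{Y}(a)$ is the set of natural transformations $Y \to \A(a, -)$. Taking $Y = \colim_i \A(a_i, -)$ and using that the hom-functor $[\A, \Set](-, \A(a, -))$ sends colimits to limits, followed by the covariant Yoneda isomorphism $[\A, \Set](\A(a_i, -), \A(a, -)) \iso \A(a, a_i)$, yields $\du{Y}(a) \iso \lim_i \A(a, a_i)$. Since limits in $[\A^\op, \Set]$ are computed pointwise, $\lim_i \A(a, a_i) = \bigl(\lim_i \A(-, a_i)\bigr)(a)$, and checking naturality in $a$ produces the displayed isomorphism. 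The only bookkeeping to watch is variance: $i \mapsto \A(-, a_i)$ is covariant in $i$ whereas $i \mapsto \A(a_i, -)$ is contravariant, so the colimit defining $Y$ is really indexed over $\scat{I}^\op$, and the hom-functor converts it back into an $\scat{I}$-indexed limit, matching the limit presenting $X$.

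The one point that genuinely requires care — and the reason for arguing by this explicit computation rather than by an abstract appeal — is that conjugacy is not in general part of an adjunction (Section~\ref{sec:conj-gen}), so I cannot simply invoke preservation of limits by a right adjoint. Proceeding through the elementary fact that $[\A, \Set](-, \A(a, -))$ turns colimits into limits sidesteps this entirely and remains valid for an arbitrary category $\A$, with no gentleness or completeness hypothesis. I would also remark that neither side of the equivalence forces $X$ to be small (compare Example~\ref{eg:conj-large-discrete}), so the limit and colimit above are taken in the full functor categories $[\A^\op, \Set]$ and $[\A, \Set]$, which is exactly the reading intended in the statement.
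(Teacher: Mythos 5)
Your proof is correct and follows essentially the same route as the paper's: both hinge on presenting the small functor $Y$ as a small colimit of representables $\colim_i \A(a_i,-)$ and observing that conjugation carries this to the limit $\lim_i \A(-,a_i)$, the paper asserting this exchange in one line where you verify it explicitly via the Yoneda lemma and the fact that $[\A,\Set](-,\A(a,-))$ turns colimits into limits. Your added care about variance, about not invoking an adjunction, and about neither side being forced to be small is all sound, but it is elaboration of the paper's argument rather than a different one.
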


\begin{proof}
Let $X \from \A^\op \to \Set$. If $X \iso \lim_i \A(-, a_i)$ for some small
diagram $(a_i)$ in $\A$ then $X$ is the conjugate of the small functor
$\colim_i A(a_i, -)$. Conversely, every small functor $\A \to \Set$ can be
expressed as a small colimit of representables, and its conjugate is the
corresponding small limit of representables.
\end{proof}

\begin{propn}
\label{propn:refl-pres-lims}
Every reflexive $\Set$-valued functor preserves small limits.
\end{propn}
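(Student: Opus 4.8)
The plan is to realize every reflexive functor as a small limit of representables and then invoke the continuity of such limits. By Remark~\ref{rmk:refl-dual} it suffices to treat a reflexive functor $X \from \A^\op \to \Set$, the covariant case being dual. First I would use reflexivity directly: since $X \iso \ddu{X} = \du{(\du{X})}$ and $\du{X} \from \A \to \Set$ is small (this being part of what reflexivity of $X$ asserts), the functor $X$ is the conjugate of a small functor. Lemma~\ref{lemma:lims-reps} then supplies a small diagram $(a_i)$ in $\A$ with $X \iso \lim_i \A(-, a_i)$, the limit computed pointwise in $[\A^\op, \Set]$. So the entire burden shifts onto showing that a small pointwise limit of representables is continuous.

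Next I would verify that each representable $\A(-, a_i) \from \A^\op \to \Set$ preserves small limits. A small limit in $\A^\op$ is a small colimit $c \iso \colim_j c_j$ in $\A$, and continuity of $\Hom$ in its first variable gives $\A(\colim_j c_j, a_i) \iso \lim_j \A(c_j, a_i)$; so representables are continuous. It then remains to pass from the continuity of each $\A(-, a_i)$ to that of their pointwise limit $X$. Concretely, for a small colimit $c \iso \colim_j c_j$ in $\A$ one computes
\[
X(c) \iso \lim_i \A(c, a_i) \iso \lim_i \lim_j \A(c_j, a_i) \iso \lim_j \lim_i \A(c_j, a_i) \iso \lim_j X(c_j),
\]
which exhibits $X$ as carrying the colimit $c \iso \colim_j c_j$ to the corresponding limit, i.e.\ as continuous on $\A^\op$.

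The only genuinely substantive step is the middle isomorphism in the display above, the interchange $\lim_i \lim_j \iso \lim_j \lim_i$, which holds because limits commute with limits in $\Set$ (both iterated limits computing the limit over the product shape). Everything else is a bookkeeping application of Lemma~\ref{lemma:lims-reps} together with the continuity of $\Hom(-, a)$ in its first argument, so I expect no further obstacles; the one point to state carefully is simply that the limit defining $X$ is computed pointwise, which is what licenses moving it past the limit over $j$.
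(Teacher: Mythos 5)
Your proposal is correct and is essentially the paper's own proof: the paper also applies Lemma~\ref{lemma:lims-reps} to write a reflexive functor as a small limit of representables (implicitly via $X \iso \ddu{X}$ with $\du{X}$ small) and then concludes from continuity of representables. Your only addition is to spell out the limit-interchange step that the paper leaves implicit, which is fine.
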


\begin{proof}
By Lemma~\ref{lemma:lims-reps}, every reflexive functor $X$ is a small
limit of representables. But representables preserve small limits, so $X$
does too. 
\end{proof}

A reflexive functor need not preserve \emph{co}limits. For example, the
unique reflexive functor $\One \to \Set$ is the terminal functor, which
does not preserve initial objects. 

Moreover, although a reflexive functor $\A \to \Set$ preserves all limits
that exist in $\A$, it need not be flat. Indeed, many of the examples in
Section~\ref{sec:eg-compl} are of categories $\A$ where the initial
functor $0 \from \A \to \Set$ is reflexive; but $0$ is not flat.

\begin{propn}
\label{propn:coco-rc}
Every complete or cocomplete category is reflexively complete.
\end{propn}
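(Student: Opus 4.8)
The plan is to use the characterization of reflexive completeness from Corollary~\ref{cor:rc-tfae}: a category is reflexively complete exactly when every reflexive functor on it is representable. Since reflexive completeness is self-dual (Corollary~\ref{cor:rc-tfae} and Remark~\ref{rmk:refl-dual}) and $\A$ is complete if and only if $\A^\op$ is cocomplete, I would first reduce to the complete case, and then dispose of the cocomplete case at the end by applying the complete case to $\A^\op$.

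So suppose $\A$ is complete; I want to show every reflexive $X \from \A^\op \to \Set$ is representable. The idea is to compute the double conjugate $\ddu{X}$ directly and show that it is representable; since $\eta_X \from X \to \ddu{X}$ is an isomorphism by reflexivity, this forces $X$ itself to be representable. The key input is that $\du{X} \from \A \to \Set$ is small (part of the definition of reflexivity), so it can be written as a small colimit of representables, say $\du{X} \iso \colim_{j \in \scat{J}} \A(b_j, -)$ with $\scat{J}$ small. Applying conjugacy once more and using that a contravariant hom turns this colimit into a limit, I expect to obtain, naturally in $a$,
\[
\ddu{X}(a) \iso \lim_j \A(a, b_j) \iso \A\bigl(a, \textstyle\lim_j b_j\bigr),
\]
where the limit $\lim_j b_j$ exists precisely because $\A$ is complete and $\scat{J}$ is small. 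This exhibits $\ddu{X} \iso \A(-, \lim_j b_j)$ as representable, completing the argument.

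The step to get right is the passage from a small colimit of representables to a representable limit: the identity $\VNat(\colim_j \A(b_j, -), \A(a, -)) \iso \lim_j \A(a, b_j)$ rests on the Yoneda identification $\VNat(\A(b_j, -), \A(a, -)) \iso \A(a, b_j)$ together with the fact that natural transformations out of a small colimit assemble into the corresponding limit. None of this is deep, but careful bookkeeping of the variances is needed, and this is exactly the place where the completeness hypothesis is consumed (to form $\lim_j b_j$), so it is what distinguishes the complete case from the general one.

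I would also flag that this is essentially the one-sided form of the representability observation recorded in Remark~\ref{rmk:gentle}. That remark proves the conjugate of a small functor is representable when $\A$ is both complete and cocomplete; here I only need that the conjugate of a small functor $\A \to \Set$ is representable, which uses completeness alone, applied to $\du{X}$. Dually, cocompleteness alone makes the conjugate of a small functor $\A^\op \to \Set$ representable, and this drives the cocomplete half of the statement.
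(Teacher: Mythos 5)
Your proof is correct and is essentially the paper's own argument: the paper routes through Lemma~\ref{lemma:lims-reps} (a functor $\A^\op \to \Set$ is a small limit of representables iff it is the conjugate of a small functor $\A \to \Set$), which you have simply inlined by writing $\du{X} \iso \colim_j \A(b_j, -)$ and conjugating to get $\ddu{X} \iso \A(-, \lim_j b_j)$, with completeness consumed at exactly the same point. The reduction of the cocomplete case to the complete case via self-duality of reflexive completeness also matches the paper's proof.
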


\begin{proof}
Let $\A$ be a complete category. By Lemma~\ref{lemma:lims-reps}, every
reflexive functor $X \from \A^\op \to \Set$ is a small limit of
representables. But $\A$ is complete, so $X$ is representable. This proves
that every complete category if reflexively complete. Since reflexive
completeness is a self-dual condition, the dual result follows.
\end{proof}

An alternative proof can be given, based on 
the last paragraph of Remark~\ref{rmk:gentle}.

\section{The Isbell envelope}
\label{sec:envelope}

This short section describes the relationship between two constructions due
to Isbell. As well as introducing the reflexive completion of a category
$\A$ in 1960~\cite{IsbeAS}, he also defined what
is now called the Isbell envelope $\IEnv(\A)$ in 1966 (naming it the
`couple category': \cite{IsbeSC}, p.~622). See also Garner~\cite{GarnIM}
for a thorough modern treatment of the Isbell envelope.

We begin with an arbitrary adjunction 
\[
\xymatrix@C+1em{
\cat{C}
\ar@/^/[r]^F_\bot     &
\cat{D},
\ar@/^/[l]^G
}
\]
with unit $\eta$ and counit $\epsln$. We already defined the invariant part
$\Inv(F, G)$ (Section~\ref{sec:completion}), which comes with full and
faithful inclusion functors
\[
\xymatrix{
\Inv(F, G)
\ar[r]
\ar[d] &
\cat{C} \\
\cat{D}
}
\]
The \demph{envelope} $\Env(F, G)$ of the adjunction is the category of
quadruples 
\[
(c \in \cat{C},\ d \in \cat{D},\ 
f \from c \to Gd,\ g \from Fc \to d)
\]
such that $f$ and $g$ are each other's transposes. A map $(c, d, f, g) \to
(c', d', f', g')$ in $\Env(F, G)$ is a pair of maps 
\[
(p \from c \to c',\ q \from d \to d')
\]
such that $(Gq)\of f = f' \of p$, or equivalently, $q \of g = g' \of
(Fp)$. There are canonical functors
\begin{align}
\label{eq:emb-Env}
\begin{array}{c}
\xymatrix{
        &
\cat{C}
\ar[d]  \\
\cat{D}
\ar[r]  &
\Env(F, G)
}
\end{array}
\end{align}
defined by
\[
c \mapsto (c, Fc, \eta_c, 1_{Fc}),
\qquad
d \mapsto (Gd, d, 1_{Gd}, \epsln_d)
\]
($c \in \cat{C}, d \in \cat{D}$), which are full and faithful. 

The invariant part and the envelope are related as follows.

\begin{lemma}
\label{lemma:Inv-Env}
Let $F \ladj G \from \cat{D} \to \cat{C}$ be an adjunction.
\begin{enumerate}
\item
The full and faithful functors
\[
\xymatrix{
\Inv(F, G)
\ar[r]
\ar[d]  &
\cat{C}
\ar[d]  \\
\cat{D}
\ar[r]  &
\Env(F, G)
}
\]
defined above form a 2-pullback square (in the up-to-isomorphism
sense). 

\item
The composite functor $\Inv(F, G) \to \Env(F, G)$ defines
an equivalence between $\Inv(F, G)$ and the full subcategory of objects
$(c, d, f, g)$ of $\Env(F, G)$ such that $f$ and $g$ are isomorphisms.
\end{enumerate}
\end{lemma}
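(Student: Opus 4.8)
The plan is to unpack the definition of the envelope and work throughout with the transpose relations $f = Gg \of \eta_c$ and $g = \epsln_d \of Ff$ that define its objects. The single computational fact driving both parts is this: if $(c,d,f,g)$ is an object of $\Env(F,G)$ and $g$ is an isomorphism, then $Gg$ is an isomorphism, so the relation $f = Gg \of \eta_c$ shows that $\eta_c$ is an isomorphism precisely when $f$ is; dually, $g = \epsln_d \of Ff$ relates invertibility of $g$ and of $\epsln_d$. In particular, an object of $\Env(F,G)$ has $f$ and $g$ both invertible if and only if the corresponding $\eta_c$ (equivalently $\epsln_d$) is invertible, i.e.\ $c \in \Inv(F,G)$.

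For part (ii) I would first observe that the composite $\Inv(F,G) \to \cat{C} \to \Env(F,G)$ is full and faithful, being a composite of full and faithful functors. Its value at $c$ is $(c, Fc, \eta_c, 1_{Fc})$, whose structure maps are $\eta_c$ (invertible since $c \in \Inv(F,G)$) and $1_{Fc}$, so the composite lands in the stated full subcategory. For essential surjectivity I would take any $(c,d,f,g)$ with $f$ and $g$ invertible; the governing fact gives that $\eta_c$ is invertible, so $c \in \Inv(F,G)$, and I would exhibit $(1_c, g)$ as an isomorphism in $\Env(F,G)$ from $(c, Fc, \eta_c, 1_{Fc})$ to $(c,d,f,g)$, the required identity $Gg \of \eta_c = f$ being exactly the transpose relation. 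Full, faithful and essentially surjective onto the subcategory then yields the equivalence.

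For part (i) I would describe the 2-pullback concretely as the iso-comma category $\cat{C} \times_{\Env} \cat{D}$ whose objects are triples $(c, d, \theta)$ with $\theta \from U c \iso V d$ an isomorphism in $\Env(F,G)$, where $U$ and $V$ are the two embeddings of \eqref{eq:emb-Env}, so $Uc = (c, Fc, \eta_c, 1_{Fc})$ and $Vd = (Gd, d, 1_{Gd}, \epsln_d)$. Writing $\theta = (p, q)$, the morphism conditions force $p \from c \to Gd$ and $q \from Fc \to d$ to be mutually transpose with $p = Gq \of \eta_c$, and invertibility of $\theta$ means $p$ and $q$ are each invertible. The square commutes up to the canonical isomorphism $(\eta_c, 1_{Fc}) \from Uc \iso V(Fc)$, so the induced comparison functor $\Phi \from \Inv(F,G) \to \cat{C} \times_{\Env} \cat{D}$ sends $c$ to $(c, Fc, (\eta_c, 1_{Fc}))$. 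I would check $\Phi$ is an equivalence: faithfulness is immediate from the $\cat{C}$-component; for fullness, a morphism $\Phi(c) \to \Phi(c')$ is a pair $(a, b)$, and comparing the two forms of the compatibility with the $\theta$'s forces $b = Fa$ (the remaining identity being naturality of $\eta$), so the pair equals $\Phi(a)$; for essential surjectivity, given $(c, d, (p,q))$ the relation $p = Gq \of \eta_c$ with $p, q$ invertible again forces $\eta_c$ invertible, so $c \in \Inv(F,G)$, and $(1_c, q)$ is an isomorphism $\Phi(c) \iso (c,d,(p,q))$.

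The main obstacle is not any single hard step but the bookkeeping: correctly identifying the ``2-pullback in the up-to-isomorphism sense'' with the iso-comma category, and keeping the transpose relations, the two equivalent forms of the $\Env$-morphism condition, and the triangle identities straight, so that the candidate isomorphisms $(1_c, g)$ and $(1_c, q)$ genuinely satisfy the defining conditions for morphisms of $\Env(F,G)$. Once the governing fact---that invertibility of $f$ (or of $p$) forces invertibility of $\eta_c$---is isolated, both parts reduce to routine verification.
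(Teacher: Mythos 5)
Your proof is correct and takes essentially the same route as the paper's, which merely sketches it: identify the 2-pullback in the up-to-isomorphism sense with the category of pairs $(c,d)$ equipped with an isomorphism $(c, Fc, \eta_c, 1_{Fc}) \to (Gd, d, 1_{Gd}, \epsln_d)$ in $\Env(F,G)$, and verify that this data amounts to an object of $\Inv(F,G)$; you simply supply the elementary checks the paper omits. One small caveat: your ``governing fact'' is overstated as an ``if and only if''---for an arbitrary object $(c,d,f,g)$, invertibility of $\eta_c$ alone does not force $f$ and $g$ to be invertible---but you only ever invoke the correct forward implication (and apply the converse only to objects of the special form $(c, Fc, \eta_c, 1_{Fc})$), so the argument is unaffected.
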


\begin{proof}
The proof is a series of elementary checks, omitted here. In outline, an
object of the 2-pullback of the two functors into $\Env(F, G)$ consists of
objects $c \in \cat{C}$ and $d \in \cat{D}$ together with an isomorphism
\[
(c, Fc, \eta_c, 1_{Fc}) \toby{\iso} (Gd, d, 1_{Gd}, \epsln_d)
\]
in $\Env(F, G)$, and one verifies that this amounts to an object of
$\Inv(F, G)$.
\end{proof}

\begin{remark}
The $\Inv$ and $\Env$ constructions can be understood as adjoints.  Let
$\ADJ$ be the 2-category defined as follows. Objects are adjunctions
$(\cat{C}, \cat{D}, F, G)$. A map $(\cat{C}, \cat{D}, F, G) \to (\cat{C}',
\cat{D}', F', G')$ consists of functors and natural transformations
\[
\bigl(
\cat{C} \toby{P} \cat{C}',\ 
\cat{D} \toby{Q}, \cat{D}',\
F'P \toby{\alpha} QF,\
PG \toby{\beta} G'Q
\bigr)
\]
such that $\alpha$ and $\beta$ are each other's mates. The 2-cells are the
evident ones. Let $\ADJi$ be the sub-2-category of $\ADJ$ consisting
of all objects, just those maps $(P, Q, \alpha, \beta)$ for which $\alpha$
and $\beta$ are isomorphisms, and all 2-cells between them.

There are 2-functors and 2-adjunctions
\[
\xymatrix@C+2em@R-1.5em@M+.2em{
        &
\ADJ
\\
\CAT
\ar@{<-}@/^1pc/[ru]^-[@]{\Env}_-[@]{\top}      
\ar@{^{(}->}[ru]
\ar@{<-}@/_1pc/[rd]_-[@]{\Inv}^-[@]{\bot}      
\ar@{^{(}->}[rd]        &
        \\
        &
\ADJi
\ar@{^{(}->}[uu]
}
\]
Here, the embedding of $\CAT$ into $\ADJi$ associates to a category the
identity adjunction on it, and $\Inv$ is its right 2-adjoint. Similarly,
$\Env$ is the right 2-adjoint of $\CAT \incl \ADJ$.
\end{remark}

Now consider the conjugacy adjunction $\cocmp{\A} \oppairu \cmp{\A}$
of a small category $\A$. Its invariant part is $\refl(\A)$. Its
envelope is the \demph{Isbell envelope} $\IEnv(\A)$. An object of
$\IEnv(\A)$ is a quadruple
\[
(X \from \A^\op \to \Set,\ 
Y \from \A \to \Set,\ 
\phi \from X \to \du{Y},\
\psi \from Y \to \du{X})
\]
such that $\phi$ and $\psi$ correspond to one another under the conjugacy
adjunction. By the isomorphisms~\eqref{eq:etimes-adjn}, $\IEnv(\A)$ is
equivalently the category of triples
\[
(X \from \A^\op \to \Set,\ 
Y \from \A \to \Set,\ 
\chi \from X \etimes Y \to \Hom_\A),
\]
with the obvious maps between them (as Isbell observed in Section~1.1
of~\cite{IsbeSC}). With this formulation of $\IEnv(\A)$, the canonical
functor $\cocmp{\A} \to \IEnv(\A)$ (as in diagram~\eqref{eq:emb-Env}) maps
$X \in \cocmp{\A}$ to $(X, \du{X}, \epsln_X)$, where $\epsln_X$ is the
natural transformation of~\eqref{eq:epsln}. A dual statement holds for
$\cmp{\A}$.

Lemma~\ref{lemma:Inv-Env} immediately implies:

\begin{propn}
For a small category $\A$, the canonical full and faithful functors
\[
\xymatrix{
\refl(\A)      
\ar[r]
\ar[d]  &
\cocmp{\A}      
\ar[d]  \\
\cmp{\A}
\ar[r]  &
\IEnv(\A)
}
\]
form a 2-pullback square.
\qed
\end{propn}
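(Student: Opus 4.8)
The plan is to deduce this directly from Lemma~\ref{lemma:Inv-Env}(i), specialized to the conjugacy adjunction. First I would recall that for a small category $\A$, conjugacy defines a genuine adjunction $\cocmp{\A} \oppairu \cmp{\A}$ (established in Section~\ref{sec:conj-small} via the isomorphisms~\eqref{eq:etimes-adjn}), with left adjoint $\vee$ and right adjoint $\vee^\op$. By the definition of the invariant part in Section~\ref{sec:completion}, $\Inv$ of this adjunction is exactly $\refl(\A)$; and by the definition given just above the proposition, $\Env$ of this adjunction is exactly $\IEnv(\A)$. So the general square of Lemma~\ref{lemma:Inv-Env}(i) becomes, on the nose, the square in the proposition.

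The substantive content is therefore Lemma~\ref{lemma:Inv-Env}(i) itself, which I would apply verbatim. What remains is only to check that the four full and faithful functors constructed in that lemma agree, up to isomorphism, with the canonically named functors in the proposition. The two inclusions $\refl(\A) \to \cocmp{\A}$ and $\refl(\A) \to \cmp{\A}$ are the inclusions of the invariant part into the two sides of the adjunction, so they match by definition. The functors $\cocmp{\A} \to \IEnv(\A)$ and $\cmp{\A} \to \IEnv(\A)$ are the maps~\eqref{eq:emb-Env} of the general envelope construction, and the text preceding the proposition records their explicit form: $X \mapsto (X, \du{X}, \epsln_X)$, and dually. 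Hence they coincide with the intended canonical functors, and the 2-pullback property is inherited with no further work.

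The only point requiring any care — and the main (minor) obstacle — is confirming that the general formulas $c \mapsto (c, Fc, \eta_c, 1_{Fc})$ and $d \mapsto (Gd, d, 1_{Gd}, \epsln_d)$ of Lemma~\ref{lemma:Inv-Env}, when the unit and counit of the conjugacy adjunction are substituted, reproduce precisely the assignment $X \mapsto (X, \du{X}, \epsln_X)$. This is a direct unwinding of the identification of $\IEnv(\A)$ with the category of triples $(X, Y, \chi)$ through the isomorphisms~\eqref{eq:etimes-adjn}, together with the observation made at the end of Section~\ref{sec:conj-small} that the $\epsln_X$ of~\eqref{eq:epsln} is exactly the counit component at $X$. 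Once this identification is in hand, the conclusion follows immediately from Lemma~\ref{lemma:Inv-Env}(i), with no further computation needed.
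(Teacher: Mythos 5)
Your proposal is correct and follows exactly the paper's route: the paper states that the proposition is an immediate consequence of Lemma~\ref{lemma:Inv-Env} applied to the conjugacy adjunction $\cocmp{\A} \oppairu \cmp{\A}$, whose invariant part is $\refl(\A)$ and whose envelope is $\IEnv(\A)$, with the embedding $X \mapsto (X, \du{X}, \epsln_X)$ identified just as you describe. Your extra care in checking that the general formulas $c \mapsto (c, Fc, \eta_c, 1_{Fc})$ specialize to the stated canonical functors is precisely the unwinding the paper leaves implicit.
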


Thus, informally, $\refl(\A) = \cocmp{\A} \cap \cmp{\A}$.

\paragraph{Acknowledgements} We thank the referee for their helpful and
insightful comments, and especially for the last paragraph of
Remark~\ref{rmk:gentle}.

\bibliography{mathrefs}
\end{document}